\tikzset{cdlabel/.style={above,sloped,%
    execute at begin node=$\scriptstyle,execute at end node=$}}
\tikzset{algarrow/.style={->, thick}}   
\tikzset{alb/.style={->, bend right=55, thick}}
\tikzset{arb/.style={->, bend left=25, thick}} 
\tikzset{al/.style={->, bend right=20, thick}}
\tikzset{ar/.style={->, bend left=20, thick}}
\tikzset{unst1/.style={->, dashed, bend right=30, thick}}
\tikzset{unst2/.style={->, dashed, bend left=30, thick}}
\tikzset{als/.style={->, bend right=15, thick}}
\tikzset{ars/.style={->, bend left=15, thick}}
\tikzset{blgarrow/.style={->, thick}}
\tikzset{clgarrow/.style={->, thick}}
\tikzset{tensoralgarrow/.style={double, double equal sign distance, -implies}}
\tikzset{tensorblgarrow/.style={double, double equal sign distance, -implies}}
\tikzset{tensorclgarrow/.style={double, double equal sign distance, -implies}}
\tikzset{tensorelgarrow/.style={double, double equal sign distance, -implies}}
\tikzset{modarrow/.style={->, dashed}}
\tikzset{Amodar/.style={->, dashed}}
\tikzset{Dmodar/.style={->, dashed}}
\tikzset{DAmodar/.style={->, dashed}}
\tikzset{cdlabel/.style={above,sloped,
    execute at begin node=$\scriptstyle,execute at end node=$}}    
\tikzset{al/.style={->, bend right=45, thick}}
\tikzset{ar/.style={->, bend left=45, thick}}
\newtheorem{theorem}{Theorem}[subsection]
\newtheorem{corollary}[theorem]{Corollary}
\newtheorem{proposition}[theorem]{Proposition}
\newtheorem{lemma}[theorem]{Lemma}
\newtheorem*{remark*}{Remark}
\newtheorem*{acknowledgements*}{Acknowledgements}
\newlist{casesp}{enumerate}{3} 
\setlist[casesp]{align=left, 
                 listparindent=\parindent, 
                 parsep=\parskip, 
                 font=\normalfont\bfseries, 
                 leftmargin=0pt, 
                 labelwidth=0pt, 
                 itemindent=.4em,labelsep=.4em, 
                 partopsep=0pt, 
                 }
\setlist[casesp,1]{label=Case~\arabic*:,ref=\arabic*}
\setlist[casesp,2]{label=Case~\thecasespi.\arabic*:,ref=\thecasespi.\arabic*}
\setlist[casesp,3]{label=Case~\thecasespii.\alph*:,ref=\thecasespii.\alph*}
\newcommand{\Z}{\ensuremath{\mathbb{Z}}}
\newcommand\F{\mathbb F}
\newcommand\M{\mathcal M}
\newcommand\thi{\mathrm{th}}
\newcommand\thiq{\mathrm{th}(Q_n(K))}
\newcommand\gq{g(Q_n(K))}
\newcommand\dbox{\bdy^{\boxtimes}}
\newcommand{\bdy}{\partial}
\newcommand{\ha}{\langle h_A \rangle }
\newcommand{\hd}{\langle h_D \rangle }
\newcommand{\gr}{\mathrm{gr}}
\newcommand{\cf}{\mathit{CF}}
\newcommand{\hf}{\mathit{HF}}
\newcommand{\cfd}{\mathit{CFD}}
\newcommand{\cfa}{\mathit{CFA}}
\newcommand{\cfk}{\mathit{CFK}}
\newcommand{\hfk}{\mathit{HFK}}
\newcommand{\cfhat}{\widehat{\cf}}
\newcommand{\hfhat}{\widehat{\hf}}
\newcommand{\cfdhat}{\widehat{\cfd}}
\newcommand{\cfahat}{\widehat{\cfa}}
\newcommand{\cfkhat}{\widehat{\cfk}}
\newcommand{\hfkhat}{\widehat{\hfk}}
\newcommand{\cfkm}{\cfk^-}
\newcommand{\hfkm}{\hfk^-}
\newcommand{\cfam}{\cfa^-}
\begin{document}

\title[{Twisted Mazur Pattern Satellite Knots \& Bordered Floer Theory}]{Twisted Mazur Pattern Satellite Knots \& Bordered Floer Theory}

\author{Ina Petkova}
\address {Department of Mathematics, Dartmouth College\\ Hanover, NH 03755}
\email {ina.petkova@dartmouth.edu}
\urladdr{\href{http://math.dartmouth.edu/~ina}{http://math.dartmouth.edu/~ina}}
\author[Biji Wong]{Biji Wong}
\address {CIRGET, Universit\'{e} du Qu\'{e}bec \`{a} Montr\'{e}al\\ Montr\'{e}al, Qu\'{e}bec H3C 3P8}
\email {biji.wong@cirget.ca}
\urladdr{\href{https://sites.google.com/view/cirget-bijiwong/home}{https://sites.google.com/view/cirget-bijiwong/home}}

\keywords{satellite knots, knot Floer homology, bordered Floer theory, 3-genus, fiberedness, Cosmetic Surgery Conjecture}


\begin{abstract}
We use bordered Floer theory to study properties of twisted Mazur pattern satellite knots $Q_{n}(K)$. We prove that $Q_n(K)$ is not Floer homologically thin, with two exceptions. We calculate the 3-genus of $Q_{n}(K)$ in terms of the twisting parameter $n$ and the 3-genus of the companion $K$, and we determine when $Q_n(K)$ is fibered. As an application to our results on Floer thickness and 3-genus, we verify the Cosmetic Surgery Conjecture for many of these satellite knots.
\end{abstract}

\maketitle





\section{Introduction}\label{sec:intro}

In its simplest form, knot Floer homology, introduced by Ozsv\'{a}th--Szab\'{o} in \cite{hfk} and Rasmussen in \cite{jrth}, assigns to a knot $K \subset S^3$ an abelian group $\hfkhat(K)$ that is endowed with two $\mathbb{Z}$-gradings $M$ and $A$. We call $M$ the Maslov grading and $A$ the Alexander grading, and we denote their difference $M-A$ by $\delta$. Knot Floer homology has proven quite useful for studying knots in $S^3$. For example, it detects the 3-genus \cite{hfkg} and fiberedness \cite{pgf, nfibered}, and has a lot to say about knot concordance \cite{tau, hepsilon, ossupsilon}.

A knot $K \subset S^3$ is said to be \textit{knot Floer homologically thin} (\textit{$\delta$-thin} for short), if its knot Floer homology $\hfkhat(K)$ takes a particularly simple form: all of its generators have the same $\delta$-grading. The class of $\delta$-thin knots includes alternating knots \cite{ozalternating}, quasi-alternating knots \cite{quasi}, and some non-quasi-alternating knots \cite{greene}. Recently, cable knots with nontrivial companions were shown to not be $\delta$-thin \cite{dey}. It is natural to conjecture whether this is true for all satellite knots. Recall the satellite construction: Every framing $n \in \mathbb{Z}$ of a knot $K \subset S^3$ gives rise to an embedding of $S^1 \times D^2$ in $S^3$ as a tubular neighborhood of $K$, which is unique up to isotopy. We define the $n$-twisted \textit{satellite knot} $P_n(K)$ of an $n$-framed \textit{companion} knot $K$ with oriented \textit{pattern} knot $P \subset S^1 \times D^2$ to be the image of $P$ under this embedding. Once we fix a generator of $H_1(S^1 \times D^2; \mathbb{Z})$, the \textit{winding number} of $P$ is the integer $w$ for which $P$ represents $w$ times the generator.

In recent years, satellite knots with winding number $1$ have been instrumental in producing exotic structures on smooth 4-manifolds, see for example \cite{yasui, hmp}. One of the more well-known winding number 1 patterns is the Mazur knot $Q$ in Figure \ref{fig:mazurknot}. Mazur \cite{mazur} used it to construct the first example of a contractible 4-manifold whose boundary is an integral homology sphere not equal to $S^3$. Levine in \cite{levine-satellite} and Feller--Park--Ray in \cite{fpr} used $0$-twisted Mazur pattern satellite knots to understand the structure of the smooth knot concordance group.  Recently, Chen \cite{wchen} using bordered Floer homology studied a class of satellite knots that encompasses $0$-twisted Mazur pattern satellite knots, and recovered some of the $\tau$ computations in \cite{levine-satellite} for $0$-twisted Mazur pattern satellite knots.

\begin{figure}[h]
  \centering
  \includegraphics[scale=.35]{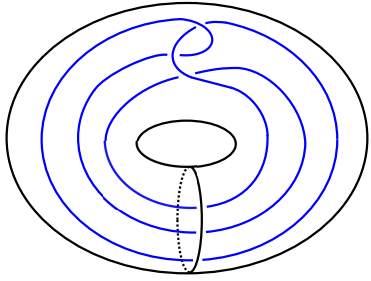}
  \caption{The Mazur pattern knot $Q$ in the solid torus}
 \label{fig:mazurknot}
\end{figure}

In this paper, we use bordered Floer homology to study some $3$-dimensional properties of arbitrarily twisted Mazur pattern satellite knots $Q_n(K)$. We show that for all but two satellites, $Q_n(K)$ is not $\delta$-thin:

\begin{theorem}\label{theorem_delta_thick}
$Q_n(K)$ is $\delta$-thick for all knots $K \subset S^3$ and integers $n$, except when $Q_n(K)$ is the trivial satellite $Q_{0}(U)$ or the $5_2$ satellite $Q_{-1}(U)$, which are $\delta$-thin.
\end{theorem}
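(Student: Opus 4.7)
The plan is to leverage the bordered Floer pairing theorem to compute $\hfkhat(Q_n(K))$ and then extract information about the set of $\delta$-gradings that occur. My first step is to fix a bordered Heegaard diagram for the pattern $Q$ sitting in the solid torus and compute the associated type DA bimodule $\cfdahat(Q)$ over the torus algebra. Because $Q$ admits a small planar diagram, this computation is purely combinatorial, and one can read off explicit Maslov and Alexander grading data on each pattern generator via the standard Lipshitz--Ozsv\'{a}th--Thurston grading conventions.

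Next, I would apply the pairing theorem to write
\[
\cfkhat(Q_n(K)) \;\simeq\; \cfdahat(Q) \boxtimes \cfdhat(S^3 \setminus \nu(K); n),
\]
viewed as a bigraded complex, with the $n$-framing absorbed into the $\cfdhat$ side via a standard twisting bimodule. Generators of the box tensor product are pairs $(q,g)$ with $q$ a generator of $\cfdahat(Q)$ and $g$ a generator of $\cfdhat(S^3\setminus K)$ in the matching idempotent, and the $\delta$-grading is additive up to known shifts recorded by the bordered invariants.

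To establish $\delta$-thickness, I plan to exhibit, for every $(n,K)$ outside the two stated exceptions, two surviving generators in $\hfkhat(Q_n(K))$ whose $\delta$-gradings differ. Since $Q$ has winding number $1$, a distinguished family of pattern generators pairs with the companion to produce a "shadow" copy of $\hfkhat(K)$ (appropriately shifted); the remaining pattern generators contribute additional classes whose $\delta$-grading depends explicitly on $n$. By analyzing the gradings coming out of the small bimodule $\cfdahat(Q)$, one can show that these two families must land in at least two different $\delta$-values except when both the shadow and the extra contributions collapse to a single line, which happens precisely when $K = U$ and $n\in\{0,-1\}$. The exceptional cases are then checked by hand: $Q_0(U) = Q$ and $Q_{-1}(U) = 5_2$ are both $\delta$-thin (well-known, or direct from the computed bimodule paired with the $\cfdhat$ of the unknot complement).

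The main obstacle will be showing that the would-be extra $\delta$-classes actually survive in homology rather than cancel against part of the shadow copy. This requires controlling the $\boxtimes$-differential: one must verify that the two distinguished classes cannot be connected by any sequence of algebra actions between them, which ultimately reduces to a statement about which algebra elements $\cfdhat(S^3\setminus K;n)$ can realize in its unstable chain. Reducing $\cfdhat(S^3\setminus K;n)$ to a normal form whose generators sit in prescribed idempotents — so that the action of the relevant torus-algebra elements is visible — should make it possible to identify two concrete cycles in $\cfkhat(Q_n(K))$ representing nonzero homology classes in distinct $\delta$-gradings, provided $(K,n)\neq (U,0),(U,-1)$.
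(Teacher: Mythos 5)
Your overall strategy --- pair a bordered invariant of the Mazur pattern with $\cfdhat$ of the $n$-framed companion complement, compute gradings of box-tensor generators, and exhibit two surviving classes in distinct $\delta$-gradings --- is exactly the approach of the paper (which uses $\cfahat(\mathcal H)\boxtimes\cfdhat(X_{K,n})$ rather than a DA bimodule, an inessential difference). However, as written the proposal has a genuine gap: the entire content of the theorem is contained in your sentence asserting that the two families of generators ``must land in at least two different $\delta$-values except when both \ldots collapse to a single line, which happens precisely when $K=U$ and $n\in\{0,-1\}$,'' and you give no argument for this. The difficult case is when all the odd-indexed elements $\widetilde\eta_{2i-1}$ of a horizontally simplified basis for $\cfkm(K)$ lie in a \emph{single} bidegree $(M,A)$; there the generators $y_2\boxtimes\eta_{2i-1}$ and $x_0\boxtimes\eta_{2i-1}$ alone see only one $\delta$-value each, and one needs a further input to produce a second $\delta$-value. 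The paper supplies this via Lemma~\ref{lem:etaoddneg}, whose proof uses that $\hfkhat$ detects the unknot and the trefoils together with the symmetry $\hfkhat_m(K,a)\cong\hfkhat_{m-2a}(K,-a)$ to force $A<0$, and then compares $x_0\boxtimes\eta_3$ against a generator $x_4\boxtimes\xi_{2t}$ coming from the \emph{vertically} simplified basis. Nothing in your outline anticipates needing detection results or the interplay between the two bases, and without them the claimed dichotomy is unsupported.

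A second, related omission: the exceptional set for the \emph{argument} is larger than the exceptional set for the \emph{theorem}. The generic case analysis fails not only for $K=U$ but also for the trefoils (where $\operatorname{rk}\hfkhat(K)=3$ defeats the counting argument), and the paper must fall back on the full thickness computation of Proposition~\ref{thm:alt-th} to conclude that trefoil satellites are nonetheless $\delta$-thick for all $n$. Your proposal treats only $Q_0(U)$ and $Q_{-1}(U)$ as needing hand verification. Finally, you identify survival in homology as the main obstacle, but this is the easy part: one simply chooses generators $x_0\boxtimes\eta_{2i-1}$, $y_2\boxtimes\eta_{2i-1}$, $x_2\boxtimes\xi_{2i}$, $x_4\boxtimes\xi_{2i}$ for which the relevant idempotent and coefficient-map constraints rule out any incoming or outgoing $\boxtimes$-differential. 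The real work, which the proposal defers entirely, is the grading case analysis described above.
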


Since quasi-alternating knots are $\delta$-thin, Theorem~\ref{theorem_delta_thick} implies the following.

\begin{corollary}
$Q_n(K)$ is quasi-alternating if and only if $Q_n(K)$ is the trivial satellite $Q_{0}(U)$ or the $5_2$ satellite $Q_{-1}(U)$.
\end{corollary}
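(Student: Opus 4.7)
The plan is to apply Theorem~\ref{theorem_delta_thick} together with the theorem of Manolescu--Ozsv\'ath \cite{quasi} that every quasi-alternating knot in $S^3$ is $\delta$-thin; this reduces the forward direction to a single implication. Specifically, if $Q_n(K)$ is quasi-alternating, then $\hfkhat(Q_n(K))$ is supported in a single $\delta$-grading, so $Q_n(K)$ cannot be $\delta$-thick. By the contrapositive of Theorem~\ref{theorem_delta_thick}, $Q_n(K)$ must therefore be either $Q_0(U)$ or $Q_{-1}(U)$.

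For the converse, the task reduces to verifying directly that each of the two exceptional knots is quasi-alternating. For $Q_0(U)$, I would observe that the $0$-twisted satellite of the unknot with the Mazur pattern is simply the unknot in $S^3$ (this can be seen by isotoping the pattern diagram in the standardly embedded solid torus and checking that all the crossings can be removed by Reidemeister moves); the unknot is quasi-alternating by definition, as the singleton unknot is the base case of the inductive definition in \cite{quasi}. For $Q_{-1}(U)$, I would check from the diagram that the $(-1)$-twisted Mazur satellite of the unknot is the knot $5_2$. Since $5_2$ is alternating (as displayed in any standard knot table), and alternating knots are quasi-alternating by \cite{quasi}, this completes the converse.

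There is no real obstacle here: both identifications $Q_0(U) = U$ and $Q_{-1}(U) = 5_2$ are asserted in the statement of Theorem~\ref{theorem_delta_thick} itself, and can be verified by elementary diagrammatic manipulation. The only substantive input is the combination of Theorem~\ref{theorem_delta_thick} with the known implication (quasi-alternating) $\Rightarrow$ ($\delta$-thin).
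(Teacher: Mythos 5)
Your argument is correct and is essentially the paper's own: the paper justifies the corollary in one line by combining Theorem~\ref{theorem_delta_thick} with the fact that quasi-alternating knots are $\delta$-thin, with the converse implicit in the identifications $Q_0(U)=U$ and $Q_{-1}(U)=5_2$ (an alternating, hence quasi-alternating, knot) already recorded in the theorem's statement. You have simply spelled out both directions more explicitly than the paper does.
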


Given any knot $K \subset S^3$, the \textit{$\delta$-thickness} of $K$, denoted  $\mathrm{th}(K)$, is defined as the difference between the maximum and minimum $\delta$-gradings in $\hfkhat(K)$ \cite{quasi}. We show that the $\delta$-thickness of $Q_n(K)$ increases without bound as we increase the number of twists:

\begin{theorem}\label{thm:th-unbounded}
For any knot $K \subset S^3$, $\lim_{n \rightarrow \pm \infty} \mathrm{th}(Q_n(K)) = \infty$.
\end{theorem}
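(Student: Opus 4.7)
The plan is to exhibit, for each sufficiently large $|n|$, two generators of $\widehat{HFK}(Q_n(K))$ whose $\delta$-gradings are separated by a quantity tending to infinity with $|n|$. I would work with the bordered Floer model for $\widehat{HFK}(Q_n(K))$ used in the proof of Theorem~\ref{theorem_delta_thick}, realizing it as the homology of $\widehat{CFDA}(Q) \boxtimes \widehat{CFD}(S^3 \setminus \nu K)$ with the $n$-dependent framing shift implemented by applying $n$ meridional Dehn twist bimodules before the pairing. As $n$ varies, the generators of the paired complex split naturally into those supported away from the twist region (whose gradings are essentially $n$-independent) and those coming from the twist region, which form a family indexed roughly by $\{1,\dots,|n|\}$ with gradings shifting by a fixed amount per twist.

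The first generator is an extremal generator $x_n^+$ realizing the top Alexander grading. By knot Floer genus detection together with the 3-genus formula for $Q_n(K)$ stated earlier in the introduction, $x_n^+$ lies in Alexander grading $g(Q_n(K))$, which grows linearly in $|n|$. Using the bordered grading formulas applied to the Mazur pattern bimodule and to the Dehn twist bimodules, I would compute the Maslov grading of $x_n^+$ and verify that its rate of growth in $|n|$ is strictly different from that of its Alexander grading; in particular, $\delta(x_n^+) = M(x_n^+) - A(x_n^+)$ diverges as $|n| \to \infty$. The second, ``control'' generator $y_n$ is chosen to have both $M$- and $A$-gradings bounded independently of $n$, for example one whose domain support in the paired complex lies outside the twist region (the obvious candidate being a generator coming from the intersection of $Q$ with a pushoff of a Seifert surface for $K$). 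I would then need to verify that both $x_n^+$ and $y_n$ survive in homology; for $x_n^+$ this is automatic from genus detection, and for $y_n$ it can be arranged by picking one in a grading parity where the Alexander polynomial contribution is nonzero. Combining these gives $\mathrm{th}(Q_n(K)) \geq |\delta(x_n^+) - \delta(y_n)| \to \infty$.

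The main obstacle is the precise tracking of the absolute $M$- and $A$-gradings of the chosen generators through the box tensor product: bordered Floer absolute gradings are subject to global shifts arising from basepoints, idempotents, and the pattern data, and these shifts must be computed explicitly in terms of $n$. The explicit form of the Mazur pattern's bordered invariant, which must already be computed to prove Theorem~\ref{theorem_delta_thick}, should keep the bookkeeping manageable, but this is where the technical heart of the argument lies. As a fallback, if the direct $x_n^+$-versus-$y_n$ comparison becomes unwieldy, an alternative is to show that the rank of $\widehat{HFK}(Q_n(K))$ at some fixed intermediate Alexander grading grows with $|n|$ faster than the corresponding Alexander polynomial coefficient; the resulting cancellation in the graded Euler characteristic would then have to occur across $\delta$-gradings that are forced apart by a growing amount, again forcing $\mathrm{th}(Q_n(K))$ to diverge.
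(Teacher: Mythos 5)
Your overall strategy (exhibit generators of $\hfkhat(Q_n(K))$ whose $\delta$-gradings are forced apart linearly in $|n|$) is the right one, but the specific pair you propose leaves the two load-bearing claims unproved, and one of your suggested justifications would not close them. First, the assertion that $\delta(x_n^+)$ diverges for the top-Alexander-grading generator is exactly the kind of statement that requires the explicit double-coset grading computation you defer to "I would compute"; it happens to be true (one can read it off from Table~\ref{tab:tensor-gr}: for $n\gg 0$ the top Alexander class is of the form $y_6\boxtimes\kappa^i_j$, whose Maslov grading stays bounded while its Alexander grading grows like $n$), but nothing in your write-up establishes it. Second, and more seriously, your control generator $y_n$ is required to have \emph{bounded $\delta$-grading and to survive in homology}, and neither of your proposed mechanisms delivers this: a nonzero Alexander polynomial coefficient in some degree only constrains the Euler characteristic of $\hfkhat$ in that Alexander degree, so it guarantees a surviving class there but says nothing about its Maslov grading, hence nothing about its $\delta$-grading; moreover it is not even clear without computing $\Delta_{Q_n(U)}$ that a nonzero coefficient exists in an Alexander degree bounded independently of $n$. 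The phrase "domain support outside the twist region" does not translate into a grading bound in the bordered formalism either. So as written the inequality $\mathrm{th}(Q_n(K))\geq|\delta(x_n^+)-\delta(y_n)|\to\infty$ is not justified.

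Ironically, you mention in passing the family that makes the paper's proof a one-liner: the generators "coming from the twist region, indexed roughly by $\{1,\dots,|n|\}$ with gradings shifting by a fixed amount per twist" are precisely the unstable-chain elements $\mu_1,\dots,\mu_{|2\tau(K)-n|}$ of $\cfdhat(X_{K,n})$. The paper tensors these with the single generator $x_6$ of $\cfahat(\mathcal H)$, which has no incoming or outgoing multiplication maps in $\cfahat(\mathcal H)$, so every $x_6\boxtimes\mu_i$ survives in homology; by Table~\ref{tab:tensor-gr} their relative $\delta$-gradings are consecutive integers, giving $\mathrm{th}(Q_n(K))\geq|2\tau(K)-n|-1$ directly, with no appeal to genus detection and no need for a control generator. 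If you replace your $x_n^+$-versus-$y_n$ comparison with this single surviving family, the gap disappears.
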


We also show that the analogue of Theorem~\ref{thm:th-unbounded} does not hold for all patterns.

\begin{theorem}\label{thm:th-bounded}
There exist nontrivial patterns $P$ and nontrivial companions $K$ such that as ${n \rightarrow \pm \infty}$,  $\thi(P_n(K))$ is bounded by a constant that only depends on $P$ and $K$.
\end{theorem}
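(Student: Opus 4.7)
My plan is to exhibit an explicit family of examples. Fix any nontrivial knot $K'$ and form the pattern $P \subset V := S^1 \times D^2$ by taking the connected sum of the core of $V$ with $K'$, where the $K'$ summand is localized inside a small $3$-ball $B \subset V$. Take $K \subset S^3$ to be an arbitrary nontrivial knot. The pattern $P$ has winding number $1$, and is nontrivial because its image under the standard unknotted embedding $V \hookrightarrow S^3$ is $K'$, hence not the unknot; in particular $P$ is not isotopic in $V$ to the core. I claim that $\thi(P_n(K))$ is independent of $n$, in fact equal to $\thi(K)+\thi(K')$.

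The first step is the topological observation that $P_n(K) = K \# K'$ for every integer $n$. The $n$-framed embedding $V \hookrightarrow S^3$ as a neighborhood of $K$ differs from the $0$-framed embedding by precomposition with $n$ meridional Dehn twists along a disk $D \subset V$. I would arrange $D$ to be disjoint from $B$ and position the core to meet $D$ orthogonally at its center. Each meridional Dehn twist then rotates $D$ about its center (so fixes the core pointwise) and is the identity on $B$ (so fixes $K'$ pointwise); consequently $P$ is invariant under these twists, and the image of $P$ in $S^3$ via the $n$-framed embedding coincides with the image via the $0$-framed embedding, namely $K \# K'$.

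The second step is to compute $\delta$-thickness. Ozsv\'ath--Szab\'o's K\"unneth-type formula provides an isomorphism
\[
\hfkhat(K \# K') \cong \hfkhat(K) \otimes \hfkhat(K')
\]
of bigraded $\mathbb{F}$-vector spaces, under which the Maslov and Alexander gradings, and hence $\delta = M - A$, add across the tensor product. It follows that $\thi(K \# K') = \thi(K) + \thi(K')$, so $\thi(P_n(K)) = \thi(K) + \thi(K')$ is constant in $n$, certainly bounded by a constant depending only on $P$ and $K$. The only step requiring real care is the geometric invariance of $P$ under the meridional Dehn twist, which becomes transparent once one arranges the core to pass through the centers of the disks and keeps the local $K'$ summand away from the twist's support.
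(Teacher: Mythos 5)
Your proposal is correct, but it proves the theorem by a genuinely different and much more elementary route than the paper. The paper takes $P$ to be the $(2,1)$-cable pattern and $K$ a $\delta$-thin knot, computes $\cfahat$ of the cable pattern and the gradings on the box tensor product with $\cfdhat(X_{K,n})$, and shows the relative $\delta$-gradings land in a set of diameter at most $2g(K)+1$ independently of $n$; in that example the satellites $P_n(K)$ are the $(2,2n+1)$-cables, so they genuinely change with $n$ while the thickness stays bounded. You instead choose the connected-sum pattern (core of $V$ summed with a nontrivial $K'$ inside a ball), observe that a meridional Dehn twist fixes this pattern up to isotopy so that $P_n(K)=K\# K'$ for every $n$, and then invoke the K\"unneth formula $\hfkhat(K\# K')\cong \hfkhat(K)\otimes_{\F}\hfkhat(K')$ with additivity of $\delta$ to get $\thi(P_n(K))=\thi(K)+\thi(K')$ for all $n$. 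All steps are sound: your pattern has winding number $1$, is not isotopic to the core (its image under the unknotted embedding is $K'$), and the Dehn-twist argument is standard for a winding-number-one pattern meeting the twisting disk in a single arc away from the local knot. What your approach buys is a two-line, computation-free proof of the existence statement; what it gives up is any nontrivial content, since the boundedness holds only because the satellite does not depend on $n$ at all, whereas the paper's example exhibits a family of pairwise distinct satellites with uniformly bounded thickness and an explicit bound.
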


In addition to the above results, in Section~\ref{sec:csc} we explicitly compute $\thiq$, for $K$ a $\delta$-thin knot, or a certain type of L-space knot.

By a classical theorem of Schubert \cite{schubert}, the 3-genus $g(P_0(K))$ of a $0$-twisted satellite knot $P_0(K)$, with nontrivial companion $K \subset S^3$ and pattern $P \subset S^1 \times D^2$, can be expressed in terms of the 3-genus $g(K)$ of $K$, the winding number $w$ of $P$, and a geometrically defined number $g(P)$ that depends only on $P$:
\begin{equation*}
g(P_0(K)) = |w| g(K) + g(P).
\end{equation*}
We give an explicit formula for the 3-genus $g(Q_n(K))$ of an arbitrarily twisted Mazur pattern satellite $Q_n(K)$, in terms of the 3-genus $g(K)$ of the companion $K$ and the twisting $n$. Our result includes the case when the companion $K$ is trivial.

\begin{theorem}\label{thm:g3}
For any nontrivial knot $K \subset S^3$,
\begin{equation*}
g(Q_n(K)) = 
\begin{cases}
                                   g(K) - n & \text{if $n\leq -1$} \\
                                   g(K) + n + 1 & \text{if $n\geq 0$}.                                 
\end{cases}
\end{equation*}
When $K$ is the unknot, 
\begin{equation*}
g(Q_n(K)) = 
\begin{cases}
                                   -n & \text{if $n\leq 0$} \\
                                   n+1 & \text{if $n\geq 1$}.                                  
\end{cases}
\end{equation*}
\end{theorem}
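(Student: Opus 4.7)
The plan is to establish matching upper and lower bounds on $g(Q_n(K))$ and to combine them.

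For the upper bound I construct an explicit Seifert surface for $Q_n(K)$ of the stated genus. Starting from a minimal genus Seifert surface $F$ for $K$ together with a minimal genus Seifert surface for the Mazur pattern $Q$ inside the solid torus $S^1 \times D^2$, I perform the $n$-twisted satellite construction, gluing along the winding-number-one longitudinal intersection and absorbing the effect of the $n$ twists by additional bands. Counting Euler characteristic gives a Seifert surface for $Q_n(K)$ of genus $g(K) + n + 1$ for $n \geq 0$ and $g(K) - n$ for $n \leq -1$ in the nontrivial companion case, with the unknot case handled by the same construction after removing the $g(K)$ contribution (the unknot having genus zero).

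For the lower bound I invoke the genus detection theorem of Ozsv\'ath--Szab\'o, which identifies $g(Q_n(K))$ with the maximal Alexander grading supporting a nontrivial class in $\hfkhat(Q_n(K))$. The strategy is to produce, for each $n$, a nonzero class in $\hfkhat(Q_n(K))$ at Alexander grading equal to the candidate genus, using the bordered Floer package: pair $\cfdahat$ of the Mazur pattern, viewed in the bordered solid torus, with $\cfdhat$ of the $n$-framed complement $S^3 \setminus \nu(K)$. After collapsing gradings appropriately, the resulting bimodule tensor product computes $\cfkhat(Q_n(K))$ with its Alexander filtration; the top Alexander grading with nontrivial homology can then be read off and matched against the stated formula.

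The main obstacle is the graded bookkeeping in the pairing. The bimodule $\cfdahat(Q)$ has several generators and a nontrivial $\cala_\infty$-action, so Alexander gradings of tensor generators must be tracked with care. Moreover, the framing parameter $n$ enters through the differentials on $\cfdhat(S^3 \setminus \nu(K))$ associated to the longitude, which both shifts Alexander gradings and alters which pairing generators survive. The heart of the argument is to identify an explicit generator of the paired complex at the target Alexander grading and to verify that it is a cycle whose homology class is not a boundary. This amounts to showing a single specific generator (depending on $n$ and on the top Alexander class in $\cfkhat(K)$) is not cancelled by any differential in the total complex; in the unknot case this reduces to a direct computation, as $\cfdhat$ of the $n$-framed solid torus is well-known and small enough to analyze explicitly. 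Combining the surface construction (upper bound) with this nonvanishing result (lower bound) completes the proof in both the nontrivial and the trivial companion cases.
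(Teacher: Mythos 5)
Your strategy is genuinely different from the paper's. The paper performs the full bordered computation for a single test companion only (the right-handed trefoil), reads off the extremal Alexander degrees of the surviving generators of $\cfahat(\mathcal H)\boxtimes\cfdhat(X_{K,n})$, and then transfers the answer to every nontrivial companion by Schubert's exact formula $g(P_0(K)) = |w|\,g(K) + g(P)$ applied to the pattern $P=Q_n$ with winding number $w=1$; the unknot, to which Schubert's formula does not apply, gets a second direct computation. Your plan instead treats all companions at once via an upper bound (explicit Seifert surface) and a lower bound (a surviving Floer class in the top Alexander grading). That division of labor is viable in principle, but as written both halves have real gaps.

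On the upper bound: the assertion that the $n$ twists are ``absorbed by additional bands'' at a cost of exactly one in genus per twist is precisely the claim $g(Q_n)\leq n+1$ (resp.\ $-n$), which is the nontrivial geometric input. The Mazur pattern meets a meridian disk in three points, so each full twist inserts a full twist on three strands, and it is not automatic that this costs only $2$ in first Betti number; an explicit surface or Euler characteristic count is required. Note also that for $K=U$, $n=0$ the glued surface is not minimal ($g(Q_0(U))=0$ while $g(Q_0)=1$), so the unknot case cannot be obtained from the same construction ``after removing the $g(K)$ contribution.'' On the lower bound: the double-coset grading on $\cfahat(\mathcal H)\boxtimes\cfdhat(X_{K,n})$ determines the Alexander grading only up to an overall translation, so one cannot directly exhibit ``a class at Alexander grading $g(K)+n+1$''; one must locate surviving generators at both extremes of the relative Alexander grading and invoke the symmetry of $\hfkhat$, which amounts to the full extremal computation. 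Moreover, for an arbitrary companion the differentials on the box tensor product (for instance $\dbox(y_3\boxtimes\lambda^i_{l_i})=y_1\boxtimes\kappa^j_1$ whenever $a_{ij}|_{U=0}\neq 0$) depend on the change-of-basis coefficients between the horizontally and vertically simplified bases of $\cfkm(K)$, which your argument does not control; verifying that your chosen generator is not a boundary therefore needs structural input about these bases that the proposal does not supply. This is exactly the difficulty that the paper's reduction via Schubert's formula is designed to avoid.
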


 We remark that there is a 4-dimensional analogue of Theorem~\ref{thm:g3} due to Cochran--Ray in \cite{cr}. They showed that for certain companion knots $K$, the 4-genus $g_4(Q_n(K))$ of $Q_n(K)$ depends only on the $4$-genus $g_4(K)$ of the companion $K$, and not on the framing $n$.

We also fully determine when $Q_n(K)$ is fibered. By a theorem of Hirasawa--Murasugi--Silver \cite{hms}, $0$-twisted satellite knots $P_0(K)$ with nontrivial companions $K$ are fibered if and only if $K$ is fibered and $P$ is fibered in $S^1 \times D^2$. We show the following:

\begin{theorem}\label{thm:fib}
If $K$ is nontrivial, then $Q_n(K)$ is fibered if and only if $K$ is fibered and $n \neq -1,0$. If $K$ is trivial, then $Q_n(K)$ is fibered if and only if $n\neq -1$.
\end{theorem}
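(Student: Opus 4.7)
The plan is to apply the fiberedness detection theorem of Ghiggini \cite{pgf} and Ni \cite{nfibered}: a knot $J \subset S^3$ is fibered if and only if $\hfkhat(J, g(J))$ has rank one over $\F$. Combined with Theorem~\ref{thm:g3}, which computes $g(Q_n(K))$ in closed form, this reduces the problem to determining when the top Alexander grading summand $\hfkhat(Q_n(K), g(Q_n(K)))$ is one-dimensional.

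To carry out this computation, I would apply the bordered Floer pairing theorem, expressing $\cfkhat(Q_n(K))$ as a box tensor product of a bordered bimodule associated to the Mazur pattern $Q \subset S^1 \times D^2$ with the bordered type-$D$ module $\cfdhat(S^3 \setminus \nu(K))$, where the twisting parameter $n$ enters through the choice of framing on $\bdy \nu(K)$. An Alexander-graded refinement of this pairing then identifies the generators of the tensor product in the top Alexander grading $g(Q_n(K))$ directly from the bordered data. The structure of the bordered invariant of $Q$ should constrain these top-graded generators, and their differentials, to depend on $n$ and on the top Alexander grading summand of $\hfkhat(K)$ in a controlled way.

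A case analysis on $n$ and on whether $K$ is trivial then completes the proof. For nontrivial $K$ with $n \notin \{-1, 0\}$, the bordered computation should produce, in top Alexander grading, a suitably shifted copy of $\hfkhat(K, g(K))$, so that the top-grading rank of $Q_n(K)$ equals that of $K$; by Ghiggini--Ni, $Q_n(K)$ is then fibered precisely when $K$ is. For nontrivial $K$ with $n \in \{-1, 0\}$, the pairing should yield at least two surviving generators in the top Alexander grading regardless of $K$, so $Q_n(K)$ is never fibered in these cases. For $K = U$, the pairing degenerates to tensoring with the bordered invariant of the $n$-framed solid torus, a direct computation: the top Alexander grading is one-dimensional exactly when $n \neq -1$, consistent with $Q_{-1}(U) = 5_2$ not being fibered and with $Q_0(U) = U$ and $Q_n(U)$ for $|n| \geq 1, n \neq -1$ all being fibered.

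The main obstacle will be the bordered calculation itself: tracking Alexander gradings across the framing change and ruling out spurious surviving generators in the top grading for the generic cases while producing precisely the right extra generators in the exceptional cases $n = -1, 0$. This should rely on the same explicit model of the bordered invariant of the Mazur pattern used to prove the earlier theorems of the paper, together with careful bookkeeping of the Alexander filtration on the tensor product.
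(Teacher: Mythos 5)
Your overall framework (Ghiggini--Ni detection of fiberedness via the rank of $\hfkhat$ in the top Alexander grading, computed through the pairing $\cfahat(\mathcal H)\boxtimes\cfdhat(X_{K,n})$) is the right one, and it is the framework the paper uses for its base cases. But as written your argument has a genuine gap: the two central claims --- that for nontrivial $K$ and $n\notin\{-1,0\}$ the top Alexander summand of $\hfkhat(Q_n(K))$ is a shifted copy of $\hfkhat(K,g(K))$, and that for $n\in\{-1,0\}$ the pairing always leaves at least two surviving generators in the top degree --- are exactly the content of the theorem, and you only assert that the bordered computation ``should'' produce them. Verifying them for an \emph{arbitrary} companion requires controlling which generators of $\cfdhat(X_{K,n})$ (from the unstable chain, and from the horizontal and vertical chains attached to basis elements in extremal Alexander degrees) contribute to the top degree of the tensor product, and which of those are killed by the differential; this depends on the full simplified-basis structure of $\cfkm(K)$ and is substantially harder than the two explicit computations the paper actually performs.

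The paper avoids this uniform analysis entirely by a structural shortcut you do not use: it regards $Q_n(K)$ as the $0$-twisted satellite $(Q_n)_0(K)$ with pattern the $n$-twisted Mazur knot, and invokes the Hirasawa--Murasugi--Silver theorem \cite{hms}, which says a $0$-twisted satellite with nontrivial companion is fibered if and only if both the companion and the pattern (in $S^1\times D^2$) are fibered. Whether $Q_n$ is fibered in the solid torus is then read off from a single test companion: the explicit box-tensor computation for the right-handed trefoil shows the top Alexander rank is $1$ exactly when $n\neq -1,0$, which pins down fiberedness of the pattern $Q_n$ for all $n$; the unknot case is a separate direct computation. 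If you want to salvage your approach without \cite{hms}, you would need to actually carry out the top-degree analysis of the tensor product for general $K$ (identifying the extremal generators from Table~\ref{tab:tensor-gr} and checking survival under $\partial^{\boxtimes}$), which is a real piece of work rather than bookkeeping; otherwise, the cleanest fix is to insert the classical satellite fiberedness theorem and reduce to one companion as the paper does.
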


Lastly, we consider a question about surgeries on satellite knots. Given a knot $K \subset S^3$, two surgeries $S^3_r(K)$ and $S^3_{r'}(K)$, with $r \neq r'$, are said to be \textit{truly cosmetic} if $S^3_r(K)$ and $S^3_{r'}(K)$ are homeomorphic as oriented manifolds. The Cosmetic Surgery Conjecture predicts that there are no truly cosmetic surgeries on nontrivial knots in $S^3$ \cite{gordon}. The conjecture has been verified for several classes of knots, including genus 1 knots \cite{wang}, nontrivial cables \cite{tao1}, knots with genus at least 3 and $\delta$-thickness at most 5 \cite{hanselman}, and most recently composite knots \cite{tao2}, 3-braids \cite{3braids}, and pretzel knots \cite{cosmeticpretzel}. One might ask whether Mazur pattern satellite knots also satisfy the conjecture. We give the following partial answer.

\begin{theorem}\label{thm:csc}
Suppose $K$ is an L-space knot or a $\delta$-thin knot. 
\begin{itemize}
\item[-] If $K$ is an L-space knot and its Alexander polynomial 
\begin{equation*} 
\Delta_K(t) = (-1)^{k+1} + \sum\limits_{j=0}^k (-1)^j (t^{r_j} + t^{-r_j} )
\end{equation*}
satisfies the property that
\begin{equation*}
r_1-r_2 \geq r_2 - r_3 \geq \ldots \geq r_{k-1}-r_k \geq r_k,
\end{equation*}
then all nontrivial satellites $Q_n(K)$ satisfy the Cosmetic Surgery Conjecture.

\item[-] If $K$ is a $\delta$-thin knot, then all nontrivial satellites $Q_n(K)$ satisfy the Cosmetic Surgery Conjecture, unless one of the following holds:
\begin{itemize}[leftmargin=*]
\item[-] $\{r, r'\} = \{\pm 2\}$, $n=-1$, and $\Delta_K(t)=2t - 5+2t^{-1}$
\item[-] $\{r, r'\} = \{\pm 1\}$, $n=-1$,  and 
\begin{equation*}
\Delta_K(t)= 
\begin{cases}
                                   2t - 5+2t^{-1}, & \hspace{2.7cm} \textrm{or} \\
                                   bt^2 - (4b+2)t + (6b+5) - (4b+2)t^{-1} + bt^{-2} & \quad \textrm{with } b\geq 1, \quad \textrm{ or}\\   
                                   bt^2 - (4b-2)t + (6b-5) - (4b-2)t^{-1} + bt^{-2} & \quad \textrm{with } b\geq 2, \quad \textrm{ or}\\
(b+1)t^2 - (4b+6)t + (6b+11) - (4b+6)t^{-1} + (b+1)t^{-2}  & \quad \textrm{with }  b\geq 0,                              
\end{cases}
\end{equation*}
\item[-] $\{r, r'\} = \{\pm 1\}$, $n=0$, and 
\begin{equation*}
\Delta_K(t)= 
\begin{cases}
                    bt^2 - 4bt + (6b-1) - 4bt^{-1} + bt^{-2} & \quad \textrm{with }  b\geq 1 \quad \textrm{ or}\\ 
bt^2 - 4bt + (6b+1) - 4bt^{-1} + bt^{-2} & \quad \textrm{with }  b\geq 1,     \quad \textrm{and } \tau(K)=-1.      
\end{cases}
\end{equation*}
\end{itemize}
\end{itemize}
\end{theorem}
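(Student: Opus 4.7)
The plan is to combine the standard cosmetic-surgery obstructions---Ni--Wu's restriction to $r'=-r\in\{\pm 2\}\cup\{\pm 1/q:q\in\Z_{>0}\}$ together with $\tau=0$, the Boyer--Lines Casson-invariant condition $\Delta''_{K'}(1)=0$, and Hanselman's inequality bounding the slope parameter $q$ in terms of $g(K')$ and $\thi(K')$---applied to $K'=Q_n(K)$, with the explicit computations of $\gq$ from Theorem~\ref{thm:g3} and of $\thiq$ carried out for $\delta$-thin and L-space companions in Section~\ref{sec:csc}.

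First I would record the Alexander polynomial of the satellite. Since the Mazur pattern has winding number one,
\[\Delta_{Q_n(K)}(t)=\Delta_{Q_n(U)}(t)\cdot\Delta_K(t),\]
and a direct Seifert-matrix calculation---matching the genus formula of Theorem~\ref{thm:g3}---gives $\Delta_{Q_n(U)}(t)$ as a Laurent polynomial whose coefficients depend linearly on $n$. Differentiating twice at $t=1$ and using $\Delta(1)=1$, $\Delta'(1)=0$ yields
\[\Delta''_{Q_n(K)}(1)=\Delta''_{Q_n(U)}(1)+\Delta''_K(1).\]
For the first bullet, when $K$ is an L-space knot satisfying the gap condition $r_1-r_2\geq\cdots\geq r_{k-1}-r_k\geq r_k$, the formula $\Delta''_K(1)=2\sum_j(-1)^jr_j^2$ combined with the gap condition forces this sum to have a definite sign matching the sign of $\Delta''_{Q_n(U)}(1)$ for every $n$ with $Q_n(K)$ nontrivial, so $\Delta''_{Q_n(K)}(1)\neq 0$ and Boyer--Lines rules out every truly cosmetic pair.

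For the second bullet I would first apply Hanselman. Theorem~\ref{thm:g3} gives $\gq\geq|n|$, while the $\delta$-thin thickness formula of Section~\ref{sec:csc} makes $\thiq$ linear in $|n|$ with slope bounded by a constant depending on $\det K$; Hanselman's inequality then yields roughly $q\cdot|n|^2\lesssim|n|$, which fails for all but the smallest $|n|$ and forces $q=1$, and the $\{\pm 2\}$ branch is similarly restricted by $\gq=2$ to $n=-1$ and $g(K)=1$. For each surviving triple $(n,q,g(K))$ the Boyer--Lines equation $\Delta''_{Q_n(U)}(1)+\Delta''_K(1)=0$ becomes a Diophantine condition on the symmetric integer Laurent polynomial $\Delta_K$, further constrained by the fact that for $\delta$-thin $K$ the group $\hfkhat(K)$ is determined by $\Delta_K$ and the signature; enumerating its solutions produces precisely the three families of exceptional $\Delta_K$'s in the statement. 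The additional constraint $\tau(K)=-1$ in the $n=0$ subcase drops out of combining Ni--Wu's $\tau(Q_0(K))=0$ with the bordered-Floer expression for $\tau(Q_0(K))$ in terms of $\tau(K)$ derived alongside the thickness computation.

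The main obstacle will be this last Diophantine enumeration and the accompanying bookkeeping---separating the $\{\pm 2,n=-1\}$ solutions from the two distinct $\{\pm 1\}$ subcases at $n=-1$ and $n=0$, verifying that each parameterized family is genuine and distinct from the others, and ensuring no solution is missed. The asymmetry between positive and negative $n$---reflecting that the Mazur pattern is not amphicheiral in the solid torus---forces the bordered-Floer computations of Section~\ref{sec:csc} to be run separately for $n\geq 0$ and $n\leq-1$, producing different $\thiq$ and $\tau(Q_n(K))$ formulas on the two branches, and these differences must be tracked carefully throughout the case analysis.
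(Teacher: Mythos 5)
Your overall architecture (Hanselman's bound via genus and thickness for the generic cases, Boyer--Lines and Ni--Wu for the leftover small cases, and the winding-number-one identity $\Delta_{Q_n(K)}=\Delta_{Q_n(U)}\Delta_K$) is the same as the paper's for thin companions. But the L-space bullet of your proposal contains a genuine gap. You propose to dispense with Hanselman entirely there and run Boyer--Lines for all $n$, on the grounds that the gap condition forces $\Delta''_K(1)=2\sum_j(-1)^jr_j^2$ to have ``a definite sign matching the sign of $\Delta''_{Q_n(U)}(1)$ for every $n$.'' The first half is true but needs no gap condition: $r_0>r_1>\cdots>r_k>0$ already gives $\sum_j(-1)^jr_j^2>0$. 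The second half is false: the paper's own data points give $\Delta''_{Q_{-2}(U)}(1)=8$, $\Delta''_{Q_{-1}(U)}(1)=4$, $\Delta''_{Q_0(U)}(1)=0$, $\Delta''_{Q_1(U)}(1)=-4$, so for $n\geq 1$ the two contributions have \emph{opposite} signs and can cancel. Concretely, if (as these values suggest) $\Delta''_{Q_n(U)}(1)=-4n$, then for $K=T(2,7)$ one has $\Delta''_K(1)=2(9-4+1)=12$ and $\Delta''_{Q_3(K)}(1)=0$, so Boyer--Lines says nothing, yet $Q_3(T(2,7))$ is covered by the theorem. The paper instead uses the hypothesis $r_1-r_2\geq\cdots\geq r_{k-1}-r_k\geq r_k$ exactly where your argument never uses it: in Lemmas~\ref{lem:lspacethickness1} and~\ref{lem:lspacethickness2}, to locate the extremal relative $\delta$-gradings of $\cfahat(\mathcal H)\boxtimes\cfdhat(X_{K,n})$ and thereby prove Proposition~\ref{prop:lspacethickness}, which feeds Hanselman's inequality $f(Q_n(K))>0$; Boyer--Lines is invoked only for the two isolated cases $(g,n)=(2,-1)$ and $(2,0)$. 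An unused hypothesis in a would-be proof of a statement that needs it is the tell here.

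For the thin bullet your outline is essentially the paper's (Proposition~\ref{thm:alt-th} plus Theorem~\ref{thm:g3} to verify $f(Q_n(K))>0$ outside a finite list of $(g,n)$, then Boyer--Lines/Ni--Wu and the enumeration of exceptional $\Delta_K$), though two points need correcting: the thickness is eventually linear in $|n|$ with slope $1$ and plateau $4g(K)-2$ depending on $g(K)$, not on $\det K$; and the Diophantine enumeration producing the three exceptional families, which you defer, is most of the actual work in Section~\ref{sssec:alt-csc} and cannot be waved at --- in particular the $\tau(K)\in\{-1,0\}$ restriction at $n=0$ comes from Ni--Wu combined with Levine's computation of $\tau(Q_0(K))$, not from a formula ``derived alongside the thickness computation.''
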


\subsection*{Organization}
We review the necessary bordered Floer homology background in Section~\ref{sec:prelim}. In Section~\ref{sec:tensor}, we use bordered Floer homology to study relevant properties of the knot Floer homology of $Q_n(K)$. In Section \ref{sec:deltathick}, we prove Theorems \ref{theorem_delta_thick}, \ref{thm:th-unbounded}, and \ref{thm:th-bounded}. In Section~\ref{sec:gf}, we prove Theorems~\ref{thm:g3} and ~\ref{thm:fib}. In Section~\ref{sec:csc}, we prove Theorem~\ref{thm:csc}.

\subsection*{Acknowledgments}
We thank Adam Levine, Tye Lidman, Brendan Owens, and Danny Ruberman for helpful conversations. We also thank the referee for helpful comments. The project began in the summer of 2019 when I.P. was a visitor at CIRGET; we thank CIRGET for its hospitality. I.P. received support from NSF Grant DMS-1711100.



\section{Preliminaries on bordered Floer theory}\label{sec:prelim}

Bordered Floer homology is an extension of Heegaard Floer homology to manifolds with  boundary \cite{bfh2}. To a parametrized surface $F$, one associates a differential algebra $A(F)$, and to a manifold $Y$ whose boundary is identified with $F$, one associates a right $\mathcal A_{\infty}$-module $\cfahat(Y)$ over $A(F)$, or a left type $D$ module $\cfdhat(Y)$ over $A(F)$.  These modules are invariants of the manifolds up to homotopy equivalence, and $\cfahat(Y_1) \boxtimes\cfdhat(Y_2)\simeq \cfhat(Y_1\cup Y_2)$. Another variant of these structures is associated to knots in bordered 3-manifolds, and recovers $\hfkhat$ or $\hfkm$ after gluing. To define these structures, one uses bordered Heegaard diagrams.

The algebra is graded by a certain nonabelian group $G$. Domains on a bordered Heegaard diagram are graded by $G$ as well. Then,  a right (resp.~left) module associated to a Heegaard diagram is graded by the space of right (resp.~left) cosets in $G$ of the subgroup of gradings of periodic domains. The tensor product is then graded by double cosets in $G$, from where one could extract the usual Heegaard Floer grading.

Below, we recall relevant definitions in the case when the boundary $F$ is a torus. For more details, see \cite{bfh2}.

The algebra $\mathcal A$ associated to the torus is generated over $\F_2$ by two idempotents denoted $\iota_0$ and $\iota_1$, and six nontrivial elements denoted $\rho_1$, $\rho_2$, $\rho_3$, $\rho_{12}$, $\rho_{23}$, and $\rho_{123}$. The differential is zero,  the nonzero products are
\[\rho_1\rho_2 = \rho_{12}\qquad \rho_2\rho_3=\rho_{23} \qquad \rho_1\rho_{23} = \rho_{123} \qquad \rho_{12}\rho_3 = \rho_{123}\]
and the compatibility with the idempotents is given by 
\begin{align*}
\rho_1 &= \iota_0 \rho_1\iota_1 \quad &\rho_2 &= \iota_1 \rho_2\iota_0 \quad &\rho_3 &= \iota_0 \rho_3\iota_1\\
\rho_{12} &= \iota_0 \rho_{12}\iota_0 \quad &\rho_{23} &= \iota_1 \rho_{23}\iota_1 \quad &\rho_{123} &= \iota_0 \rho_{123}\iota_1.
\end{align*}

Let $X_{K,n}$ be the $n$-framed knot complement $X_K = S^3\setminus \mathrm{nbhd}(K)$. One can compute the left type $D$ structure $\cfdhat(X_{K,n})$ from $\cfkm(K)$ as follows. 

There exist a pair of bases  $\widetilde{\boldsymbol{\eta}} = \{\widetilde\eta_0, \ldots, \widetilde\eta_{2k}\}$ and $\widetilde{\boldsymbol{\xi}} = \{\widetilde\xi_0, \ldots, \widetilde\xi_{2k}\}$ for $\cfkm(K)$ (over $\F_2[U]$) that are horizontally simplified and vertically simplified, respectively, indexed so that there is a horizontal arrow of length $l_i\geq 1$ from $\widetilde\eta_{2i-1}$ to $\widetilde\eta_{2i}$ and a vertical arrow of length $k_i\geq 1$ from $\widetilde\xi_{2i-1}$ to $\widetilde\xi_{2i}$. There are corresponding bases $\boldsymbol{\eta} = \{\eta_0, \ldots, \eta_{2k}\}$ and $\boldsymbol{\xi} = \{\xi_0, \ldots, \xi_{2k}\}$ for $\iota_0\cfdhat(X_{K,n})$, such that if $\widetilde\xi_p = \sum_{i=0}^{2k}a_{ip}\widetilde\eta_i$ and $\widetilde\eta_p = \sum_{i=0}^{2k}b_{ip}\widetilde\xi_i$, then $\xi_p = \sum_{i=0}^{2k}a_{ip}|_{U=0}\eta_i$ and $\eta_p = \sum_{i=0}^{2k}b_{ip}|_{U=0}\xi_i$. The summand $\iota_1\cfdhat(X_{K,n})$ has basis 
\[\bigcup_{i=1}^k\{\kappa_1^i, \ldots, \kappa_{k_i}^i\}\cup\bigcup_{i=1}^k\{\lambda_1^i, \ldots, \lambda_{l_i}^i\}\cup\{\mu_1, \ldots, \mu_{|2\tau(K)-n|}\}.\]
For each vertical arrow $\widetilde\xi_{2i-1} \rightarrow \widetilde\xi_{2i}$, there are corresponding coefficient maps
\begin{displaymath}
\xymatrix{ 
\xi_{2i-1} \ar[r]^{D_{1}}&\kappa_1^i  &  \ar[l]_{D_{23}} \dots & \ar[l]_{D_{23}} \kappa_{k_i}^i & \ar[l]_{D_{123}}\xi_{2i},
}
\end{displaymath}
and for each horizontal arrow $\widetilde\eta_{2i-1} \rightarrow \widetilde\eta_{2i}$, there are corresponding coefficient maps
\begin{displaymath}
\xymatrix{ 
\eta_{2i-1} \ar[r]^{D_3}&\lambda_1^i   \ar[r]^{D_{23}}& \dots\ar[r]^{D_{23}} & \lambda_{l_i}^i \ar[r]^{D_{2}}& \eta_{2i}.
}
\end{displaymath}
Depending on the framing $n$, there are additional coefficient maps
\begin{displaymath}
\xymatrix@R-14pt{ 
\xi_0  \ar[r]^{D_{12}} & \eta_0 & & & & &  \text{if }  n = 2\tau, \hspace{70pt}\\
\xi_0 \ar[r]^{D_1}&\mu_1  &  \mu_2 \ar[l]_{D_{23}}& \dots\ar[l]_{D_{23}} & \mu_m \ar[l]_{D_{23}}& \ar[l]_{D_3} \eta_0&  \text{if } n < 2\tau, \quad  m=2\tau-n,\\
\xi_0  \ar[r]^{D_{123}}&\mu_1 \ar[r]^{D_{23}} &  \mu_2 \ar[r]^{D_{23}}& \dots\ar[r]^{D_{23}}& \mu_{m} \ar[r]^{D_2}&   \eta_0 &\text{if } n > 2\tau, \quad  m=n-2\tau.
}
\end{displaymath}
We refer to the above chains of coefficient maps as the \emph{vertical chains}, the \emph{horizontal chains}, and the \emph{unstable chain}.

Given a doubly-pointed bordered Heegaard diagram $\mathcal H$ for a knot $Q$ in a solid torus $V$, one can associate right type $A$ structures $\cfam(\mathcal H)$ and $\cfahat(\mathcal H)$. See Section \ref{sec:mazur} for the  specific $\cfam(\mathcal H)$ and $\cfahat(\mathcal H)$ that we will be interested in.

Given any right type $A$ structure $M$ and any left type $D$ structure $N$ over the same algebra, with at least one of $M$ or $N$ bounded (an algebraic condition which our module $\cfam(\mathcal H)$ from Section~\ref{sec:mazur} satisfies), their \emph{box tensor product} is the chain complex  $M\boxtimes N$ defined as follows. As an $\F_2$ vector space, $M\boxtimes N$ is just $\M\otimes_{\mathcal I}N$. 
 The differential  $\bdy^{\boxtimes}(x_1\boxtimes y_1)$ has $x_2\boxtimes y_2$ in the image whenever there is a sequence of coefficient maps $D_{I_1}, \ldots, D_{I_n}$ from $y_1$ to $y_2$ and a multiplication map $m_{n+1}(x_1, \rho_{I_1}, \ldots, \rho_{I_n})$ with $x_2$ in the image, both indexed the same way. Further,  $\bdy^{\boxtimes}(x_1\boxtimes y)$ has $x_2\boxtimes y$ in the image  whenever $x_2$ is in the image of $m_1(x_2)$, and  $\bdy^{\boxtimes}(x\boxtimes y_1)$ has $x\boxtimes y_2$ in the image whenever there is a coefficient map with no label from $y_1$ to $y_2$. See  \cite[Definition 2.26 and Equation (2.29)]{bfh2}. 
 
When the type $A$ structure is $\cfam(\mathcal H)$ or $\cfahat(\mathcal H)$, and the type $D$ structure is $\cfdhat(X_{K,n})$, we have homotopy equivalences $\cfam(\mathcal H)\boxtimes \cfdhat(X_{K,n})\simeq g\cfkm(Q_n(K))$ and $\cfahat(\mathcal H)\boxtimes \cfdhat(X_{K,n})\simeq \widehat{\mathit{gCFK}}(Q_n(K))$.
 
We now briefly recall gradings on these algebraic objects. The algebra  $\mathcal A$ is graded by a group $G$ given by quadruples $(a; b,c; d)$ with $a,b,c\in \frac 1 2 \Z$, $d\in \Z$, and $b+c\in \Z$ and group law
\[(a_1; b_1, c_1; d_1)\cdot (a_2; b_2, c_2; d_2) = \left(a_1+a_2 + \begin{vmatrix} 
b_1 & c_1 \\
b_2 & c_2 
\end{vmatrix}; b_1+b_2, c_1+c_2; d_1+d_2\right).\]
The grading function is then defined by
\begin{align*}
\gr(\rho_1) & = (\textstyle -\frac{1}{2}; \frac{1}{2}, -\frac{1}{2}; 0)\\
\gr(\rho_2) & = (\textstyle -\frac{1}{2}; \frac{1}{2}, \frac{1}{2}; 0)\\
\gr(\rho_3) & = (\textstyle -\frac{1}{2}; -\frac{1}{2}, \frac{1}{2}; 0).
\end{align*}
along with the rule that for homogeneous algebra elements $a,b$, we have $\gr(ab)= \gr(a)\gr(b)$.

The type $D$ module $\cfdhat(X_{K,n})$ is graded by the coset space $G/\hd$, where $h_D =(-\frac{n}{2}-\frac{1}{2}; -1, -n; 0)$. A homogeneous generator $s$ of $\iota_0\cfdhat(X_{K,n})$ has grading 
\begin{equation}\label{eqn:homogencfd}
\gr(s) = \textstyle \left(M(\tilde s)-\frac 3 2 A(
\tilde s); 0, -A(\tilde s);0\right) \in G/\hd,
\end{equation}
where $M(\tilde s)$ and $A(\tilde s)$ are the Maslov grading and Alexander filtration of the corresponding generator $\tilde s$ in $\cfkm(K)$, respectively. In particular, we recall that 
\begin{equation}\label{eqn:gradings}
A(\widetilde\xi_0) =\tau(K) \qquad M(\widetilde\xi_0) = 0 \qquad A(\widetilde\eta_0) = -\tau(K)\qquad M(\widetilde\eta_0) = -2\tau(K).
\end{equation}

 If $D_I$ is a coefficient map from $x$ to $y$ then the gradings of $x$ and $y$ are related by
\begin{equation}\label{eqn:cfd}
\gr(y) = \lambda^{-1}\gr(\rho_I)^{-1}\gr(x)\in G/\hd,
\end{equation}
where $\lambda = (1; 0, 0; 0)$. 

Given a doubly-pointed bordered Heegaard diagram $\mathcal H$ for a knot $Q$ in a solid torus $V$, the module $\cfam(\mathcal H)$ is graded by the coset space $\ha\backslash G$, where $\ha$ is the subgroup of gradings of periodic domains. This subgroup depends on the knot $Q$; for the Mazur pattern, we find a generator $h_A$ in Section~\ref{sec:mazur}. For a multiplication map $m_{l+1} (x, \rho_{I_1}, \ldots, \rho_{I_l}) = U^i y$ we have the formula
\begin{equation}\label{eqn:cfa}
\gr(y) = \gr (x)\lambda^{l-1}\gr(\rho_{I_1})\cdots \gr(\rho_{I_l})(0; 0,0; i)\in \ha\backslash G.
\end{equation}
When the underlying manifold is the solid torus $V$, this is sufficient to obtain a relative grading of all generators. 

The tensor product $\cfam(\mathcal H)\boxtimes \cfdhat(X_{K,n})\simeq g\cfkm(Q_n(K))$ is graded by the double-coset space $\ha\backslash G/\hd$ via $\gr(x\boxtimes y) = \gr(x)\gr(y)$. The double-coset space $\ha\backslash G/\hd$ is isomorphic to $\Z\oplus \Z$, and for a homogeneous element $x\boxtimes y$, there is a unique grading representative of the form $(a; 0,0; d)$ with $a,d\in \Z$. Note that $a$ agrees with the absolute $z$-normalized grading $N$ of $x\boxtimes y$, and up to an overall translation $d$ agrees with the Alexander grading $A$ of $x\boxtimes y$.



\section{The complex $\cfahat(\mathcal H)\boxtimes \cfdhat(X_{K,n})\simeq \cfkhat(Q_n(K))$}\label{sec:tensor}

In this section, we work out general grading formulas for the generators of  $\cfahat(\mathcal H)\boxtimes \cfdhat(X_{K,n}) \simeq \cfkhat(Q_n(K))$, where $\mathcal{H}$ is a doubly-pointed bordered Heegaard diagram for the Mazur pattern in the solid torus. We also make some useful observations about the differential on this  complex. 

\subsection{$\cfahat$ of the Mazur pattern in the solid torus}\label{sec:mazur}
Let $V$ denote the solid torus $S^1 \times D^2$, and let $Q$ denote the Mazur pattern in $V$. Figure~\ref{fig:mazur-hd} is a doubly-pointed bordered Heegaard diagram $\mathcal{H}$ for $(V, Q)$, see also \cite[Figure~9]{levine-satellite}.
\begin{figure}[h]
  \centering
  \labellist
    \pinlabel  $z$ at 9 71
    \pinlabel  $w$ at 26 4 
    \pinlabel  $3$ at 4 4
    \pinlabel  $2$ at 76 4
    \pinlabel  $1$ at 76 76
    \pinlabel  $x_0$ at -4 20
    \pinlabel  $y_2$ at -4 30
    \pinlabel  $y_4$ at -4 40
    \pinlabel  $x_4$ at -4 50
    \pinlabel  $x_2$ at -4 60
    \pinlabel  $x_5$ at 16 -4
    \pinlabel  $x_6$ at 23 -4
    \pinlabel  $y_6$ at 30 -4
    \pinlabel  $y_5$ at 37 -4
    \pinlabel  $y_1$ at 44 -4
    \pinlabel  $y_3$ at 50 -4
    \pinlabel  $x_3$ at 57 -4
    \pinlabel  $x_1$ at 65 -4
  \endlabellist
  \includegraphics[scale=2]{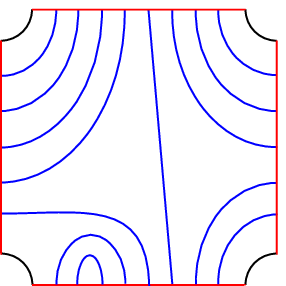}
  \caption{A bordered Heegaard diagram $\mathcal{H}$ for the pair $(V,Q)$.}
  \label{fig:mazur-hd}
\end{figure}

Over $\F_2[U]$, the type $A$ structure $\mathit{CFA}^{-}(\mathcal{H})$ is generated by $x_0$, $x_2$, $x_4$, $y_2$,  $y_4$ in idempotent $\iota_0$, and by $x_1$, $x_3$, $x_5$, $x_6$, $y_1$, $y_3$, $y_5$,  $y_6$ in idempotent $\iota_1$. The multiplication maps are encoded by the  labeled edges in Figure~\ref{fig:cfa}: an arrow from $v_1$ to $v_2$ with label $U^ta_1\cdots a_n$ describes the multiplication map $m_{n+1}(v_1, a_1, \ldots, a_n) = U^tv_2$, while an arrow from $v_1$ to $v_2$ with label $U^ta_1\cdots a_n + U^sb_1\cdots b_p$ describes the multiplication maps $m_{n+1}(v_1, a_1, \ldots, a_n) = U^tv_2$ and $m_{p+1}(v_1, b_1, \ldots, b_p) = U^pv_2$.
  \begin{figure}[h]
    \centering
    \scalefont{0.8}
    \begin{tikzpicture}[xscale=0.97,yscale=0.97]
      \node at (0,2.5) (x0) {$x_0$};
      \node at (2.5,2.5) (x1) {$x_1$};
      \node at (5,2.5) (x2) {$x_2$};
      \node at (7.5,2.5) (x3) {$x_3$};
      \node at (10,2.5) (x4) {$x_4$};
      \node at (12.5,2.5) (x5) {$x_5$};
      \node at (15,2.5) (x6) {$x_6$};
      \node at (2.5,0) (y1) {$y_1$};
      \node at (5,0) (y2) {$y_2$};
      \node at (7.5,0) (y3) {$y_3$};
      \node at (10,0) (y4) {$y_4$};
      \node at (12.5,0) (y5) {$y_5$};
      \node at (15,0) (y6) {$y_6$};
      \draw[algarrow] (x1) to node[above] {$\rho_2$} (x0);
      \draw[algarrow] (x2) to node[above] {$\rho_1$} (x1);
      \draw[algarrow] (x4) to node[below, xshift=15pt] {$\rho_1$} (x3);
      \draw[algarrow] (y4) to node[above] {$\rho_1$} (y3);
      \draw[algarrow] (y2) to node[above] {$U\rho_1$} (y1);
      \draw[al] (x2) to node[above, yshift=2pt] {$\rho_{12}$} (x0);
      \draw[ar] (x4) to node[above, xshift=10pt] {$U\rho_3\rho_2\rho_1$} (x6);
      \draw[ar] (x2) to node[above, yshift=2pt] {$U\rho_3\rho_2\rho_1$} (x5);
      \draw[algarrow] (x1) to node[midway, sloped, below] {$U^2+U\rho_{23}$} (y1);
      \draw[algarrow] (x4) to node[midway, sloped, above] {$U\rho_{123}\rho_2\rho_1$} (y5);
      \draw[algarrow] (x3) to node[midway, sloped, xshift= - 28pt, below] {$U\rho_{23}\rho_2\rho_1$} (y5);
      \draw[algarrow] (x2) to node[left] {$U$} (y2);
      \draw[algarrow] (x3) to node[left] {$U$} (y3);
      \draw[algarrow] (x4) to node[right, yshift= - 20pt] {$U$} (y4);
      \draw[algarrow] (x5) to node[right] {$U$} (y5);
      \draw[algarrow] (x6) to node[right] {$U$} (y6);
      \draw[algarrow] (x3) to node[right, xshift=-10pt, yshift=-15pt] {$\rho_2$} (y2);
      \draw[alb] (x4) to node[above, yshift=5pt] {$\rho_{12}$} (y2);
      \draw[ar] (y3) to node[above, yshift=2pt] {$\rho_2\rho_1$} (y1);
      \draw[ar] (y4) to node[xshift=-15pt, yshift=-2pt, below] {$\rho_{12}\rho_1$} (y1);
      \draw[al] (y4) to node[below, xshift=15pt] {$U\rho_3\rho_2\rho_1$} (y6);
      \draw[al] (y2) to node[below, xshift=10pt] {$U\rho_3\rho_2\rho_1$} (y5);
      \draw[al] (x0) to node[midway, sloped, below, xshift=-15pt] {$U\rho_3$} (y1);
      \draw[algarrow] (x2) to node[midway, sloped, above] {$U\rho_{123}$} (y1);
    \end{tikzpicture}	
    \vspace{-.2cm}	
    \caption{The type $A$ structure $\cfam(\mathcal{H})$.}
    \label{fig:cfa}
  \end{figure}

Consider  the periodic domain $B\in \pi_2(x_0, x_0)$ corresponding to traversing the loop
\begin{displaymath}
\xymatrix{ 
x_0 \ar[r]^{U\rho_3}& y_1  &\ar[l]_{U^2} x_1 \ar[r]^{\rho_2}& x_0.
}
\end{displaymath}
Using Equation~\ref{eqn:cfa}, we compute the following relative gradings in $\ha\backslash G$.
\begin{align*}
\gr(y_1) &= \gr (x_0)\gr(\rho_{3})(0; 0,0; 1)\in \ha\backslash G\\
\gr(y_1) &= \gr (x_1)\lambda^{-1}(0; 0,0; 2)\in \ha\backslash G\\
\gr(x_0) &= \gr (x_1)\gr(\rho_2)\in \ha\backslash G
\end{align*}
The first equation is equivalent to 
\[\gr(x_0) = \gr(y_1)(0; 0,0; -1)\gr(\rho_3)^{-1}.\]
Substituting the left coset $\gr (x_1)\lambda^{-1}(0; 0,0; 2)$ for $\gr(y_1)$, we get
\[\gr(x_0) = \gr (x_1)\lambda^{-1}(0; 0,0; 2)(0; 0,0; -1)\gr(\rho_3)^{-1}=\gr(x_1)\gr(\rho_3)^{-1}(-1;0,0;1).\]
Further substituting $\gr(x_1) = \gr(x_0)\gr(\rho_2)^{-1}$, we get
\[\gr(x_0) =  \gr(x_0)\gr(\rho_2)^{-1}\gr(\rho_3)^{-1}(-1;0,0;1) = \gr(x_0) \textstyle\left(\frac 1 2; 0, -1;1\right).\]
So $\left(\frac 1 2; 0, -1;1\right)\in \ha$, and since $\left(\frac 1 2; 0, -1;1\right)$ is not a positive multiple of another group element, it generates $\ha$. From here on, we will use the generator 
\[h_A =\textstyle \left(-\frac 1 2; 0, 1; -1\right)\]
of $\ha$.

From here on, we abuse notation and denote cosets by their representatives. We normalize the grading by setting 
\[\gr(x_0) = (0;0,0;0).\]
Since $m_2 (x_1, \rho_2) = x_0$, we get $\gr(x_0) = \gr(x_1)\gr(\rho_2)$, so 
\[\gr(x_1) = \gr(x_0)\gr(\rho_2)^{-1} = \textstyle\left(\frac 1 2; -\frac 1 2, -\frac 1 2; 0\right).\]
Since $m_2 (x_2, \rho_1) = x_1$, we get $\gr(x_1) = \gr(x_2)\gr(\rho_1)$, so 
\[\gr(x_2) = \gr(x_1)\gr(\rho_1)^{-1} = \textstyle\left(\frac 1 2; -1, 0; 0\right).\]
Continuing these computations along any spanning tree for the graph in Figure~\ref{fig:cfa}, we obtain the gradings of all generators. We summarize the result below. 
\begin{align*}
\gr(x_0) &= (0;0,0;0) & \gr(y_1) &=\textstyle \left(-\frac 1 2; -\frac 1 2, \frac 1 2; 1 \right) \\
\gr(x_1) &= \textstyle\left(\frac 1 2; -\frac 1 2, -\frac 1 2; 0\right) & \gr(y_2) &=\textstyle \left(-\frac 1 2; -1, 0; 1 \right)\\
\gr(x_2) &= \textstyle \left(\frac 1 2; -1, 0; 0\right) & \gr(y_3) &=\textstyle \left(-\frac 1 2; -\frac 3 2, -\frac 1 2; 2\right)\\
\gr(x_3) &= \textstyle\left(\frac 1 2; -\frac 3 2, -\frac 1 2; 1\right) & \gr(y_4) &= \left(-1; -2, 0; 2\right)\\
\gr(x_4) &= \left(0; -2, 0; 1\right) & \gr(y_5) &= \textstyle \left(-\frac 3 2; -\frac 1 2, \frac 1 2; 2 \right)\\
\gr(x_5) &=\textstyle \left(-\frac 1 2; -\frac 1 2, \frac 1 2; 1 \right) & \gr(y_6) &= \textstyle\left(-\frac 5 2; -\frac 3 2, \frac 1 2; 3 \right)\\
\gr(x_6) &= \textstyle\left(-\frac 3 2; -\frac 3 2, \frac 1 2; 2 \right) & &
\end{align*}
We remind the reader that following a different path to a given generator may result in a different representative of the same coset.

\subsection{The gradings on $\cfdhat(X_{K,n})$}\label{ssec:cfd-gr}
We begin with a discussion of the gradings in $G/\hd$ of the generators of $\cfdhat(X_{K,n})$. Since the last component of the grading is always zero here, we omit it. Recall from Equation~\ref{eqn:homogencfd} that each homogeneous generator $s$ of $\iota_0\cfdhat(X_{K,n})$ is graded by 
\begin{equation*}
\gr(s) = \textstyle \left(M(\tilde s)-\frac 3 2 A(
\tilde s); 0, -A(\tilde s)\right),
\end{equation*}
where $M(\tilde s)$ and $A(\tilde s)$ are the Maslov grading and Alexander filtration of the corresponding generator $\tilde s$ in $\cfkm(K)$, respectively.

Next consider the vertical chain 
\begin{displaymath}
\xymatrix{ 
\xi_{2i-1} \ar[r]^{D_{1}}&\kappa_1^i  &  \ar[l]_{D_{23}} \dots & \ar[l]_{D_{23}} \kappa_{k_i}^i & \ar[l]_{D_{123}}\xi_{2i}.
}
\end{displaymath}
Using Equation~\ref{eqn:cfd}, we  see that
\begin{align*}
\gr(\kappa_1^i) &= \lambda^{-1}\gr(\rho_{1})^{-1}\gr(\xi_{2i-1})\\
& = \lambda^{-1}\textstyle(\frac 1 2; -\frac 1 2, \frac 1 2) (M(\widetilde \xi_{2i-1})-\frac 3 2 A(\widetilde \xi_{2i-1}); 0,-A(\widetilde \xi_{2i-1}))\\
&= \textstyle (M(\widetilde \xi_{2i-1})- A(\widetilde \xi_{2i-1}) - \frac 1 2; -\frac 1 2,-A(\widetilde \xi_{2i-1}) + \frac 1 2).
\end{align*}
Continuing along the chain, we obtain the general formula
\begin{align*}
\gr(\kappa_j^i) &= \lambda^{j-1} \gr(\rho_{23})^{j-1}\gr(\kappa_1^i)\\
& = \textstyle(\frac j 2 - \frac 1 2; 0, j -1) \gr(\kappa_1^i)\\
&= \textstyle (M(\widetilde \xi_{2i-1})- A(\widetilde \xi_{2i-1}) + j- \frac 3 2; -\frac 1 2 , -A(\widetilde \xi_{2i-1}) + j -\frac 1 2).
\end{align*}
Similarly, traversing the horizontal chain 
\begin{displaymath}
\xymatrix{ 
\eta_{2i-1} \ar[r]^{D_3}&\lambda_1^i   \ar[r]^{D_{23}}& \dots\ar[r]^{D_{23}} & \lambda_{l_i}^i \ar[r]^{D_{2}}& \eta_{2i},
}
\end{displaymath}
we get
\[\gr(\lambda_j^i) = \textstyle (M(\widetilde \eta_{2i-1}) - 2A(\widetilde \eta_{2i-1}) - \frac 1 2; \frac 1 2, - A(\widetilde \eta_{2i-1}) - j +\frac 1 2 ).\]
Last, we traverse the unstable chain, starting from $\xi_0$ and working towards $\eta_0$. 
When $n=2\tau(K)$, there are no additional generators. When $n < 2\tau(K)$, the unstable chain takes the form
\begin{equation*}
\xi_0 \xrightarrow{\textrm{$D_1$}} \mu_1 \xleftarrow{\textrm{$D_{23}$}} \cdots \xleftarrow{\textrm{$D_{23}$}} \mu_{2\tau-n} \xleftarrow{\textrm{$D_{3}$}} \eta_0 
\end{equation*}
and we get
\[\gr(\mu_j) = \lambda^{j-2} \gr(\rho_{23})^{j-1}\gr(\rho_1)^{-1}\gr(\xi_0) = \textstyle(- \tau(K) + j - \frac 3 2; -\frac 1 2, -\tau(K) + j - \frac 1 2).\]
When $n>2\tau(K)$, the unstable chain takes the form
\begin{equation*}
\xi_0 \xrightarrow{\textrm{$D_{123}$}} \mu_1 \xrightarrow{\textrm{$D_{23}$}} \cdots \xrightarrow{\textrm{$D_{23}$}} \mu_{n-2\tau} \xrightarrow{\textrm{$D_{2}$}} \eta_0 
\end{equation*}
and we get 
\[\gr(\mu_j) = \lambda^{-j}\gr(\rho_{123})^{-1}\gr(\rho_{23})^{1-j}\gr(\xi_0) = \textstyle( - \tau(K) - j+\frac 1 2; -\frac 1 2, -\tau(K) - j + \frac 1 2).\]

\subsection{The gradings on $\cfahat(\mathcal H)\boxtimes \cfdhat(X_{K,n})$}\label{ssec:tensor-gr}

In this subsection, we compute grading representatives of $\textrm{gr}(x_A\boxtimes x_D)$ in the double-coset space $\ha\backslash G/\hd$ of the form $(a; 0,0; d)$ for all generators $x_A\boxtimes x_D \in \cfahat(\mathcal H)\boxtimes \cfdhat(X_{K,n})$. Note that not all these generators survive in homology; the differential is discussed in the subsequent section. Recall that $h_A = (-\frac 1 2; 0,1; -1)$ and $h_D = (-\frac{n}{2}-\frac{1}{2}; -1, -n; 0)$, where $n$ is the framing of $K$. Recall also that $\cfahat(\mathcal H)\boxtimes \cfdhat(X_{K,n})\simeq g\cfkhat(Q_n(K))$. The procedure is as follows. Given any generator $x_A\boxtimes x_D$, we multiply the coset grading representatives for $x_A$ and $x_D$ to obtain a double coset representative for $x_A\boxtimes x_D$.  Then we multiply the double coset representative by an appropriate power of $h_D$ on the right, to obtain a representative with $0$ in the second coordinate. Last, we multiply the new double coset representative by an appropriate power of  $h_A$ on the left, to obtain a double coset representative with $0$ in the second and third coordinates. In the resulting grading representative $(a; 0,0; d)$, $a$ is the absolute $z$-normalized Maslov grading $N$ of $x_A\boxtimes x_D$ in $g\cfkhat(Q_n(K))$, and  $d$ is the relative Alexander grading  $A_{\textrm{rel}}$  of $x_A\boxtimes x_D$ in $g\cfkhat(Q_n(K))$, that is, the Alexander grading $A$ considered up to an overall translation.

As an example, for generators of $\cfahat(\mathcal H)\boxtimes \cfdhat(X_{K,n})$ of the form $x_1\boxtimes \kappa_j^i$,
\begin{align*}
\gr(x_1\boxtimes \kappa_j^i) &=\gr(x_1)\gr(\kappa_j^i)\\
& = \textstyle(\frac1 2; -\frac 1 2,-\frac 1 2; 0)   (M(\widetilde \xi_{2i-1})- A(\widetilde \xi_{2i-1}) + j- \frac 3 2; -\frac 1 2 , -A(\widetilde \xi_{2i-1}) + j -\frac 1 2; 0)\\
&=  \textstyle(M(\widetilde \xi_{2i-1}) - \frac 1 2 A(\widetilde \xi_{2i-1}) + \frac 1 2 j -1; -1, - A(\widetilde \xi_{2i-1}) + j -1; 0)\\
&=  \textstyle(M(\widetilde \xi_{2i-1}) - \frac 1 2 A(\widetilde \xi_{2i-1}) +\frac 1 2 j -1; -1, - A(\widetilde \xi_{2i-1}) + j -1; 0) h_D^{-1}\\
&=  \textstyle(M(\widetilde \xi_{2i-1}) - \frac 1 2 A(\widetilde \xi_{2i-1}) + \frac 1 2 j -1; -1, - A(\widetilde \xi_{2i-1})+ j -1; 0) (\frac{n}{2} + \frac{1}{2}; 1, n; 0)\\
&= \textstyle(M(\widetilde \xi_{2i-1}) + \frac 1 2 A(\widetilde \xi_{2i-1}) - \frac 1 2 j - \frac n 2 + \frac 1 2; 0, - A(\widetilde \xi_{2i-1})+j +n -1; 0)  \\
&= h_A^{A(\widetilde \xi_{2i-1}) - j -n +1} \textstyle(M(\widetilde \xi_{2i-1}) + \frac 1 2 A(\widetilde \xi_{2i-1}) - \frac 1 2 j - \frac n 2 + \frac 1 2; 0, - A(\widetilde \xi_{2i-1})+j +n -1; 0)\\
&= (M(\widetilde \xi_{2i-1}); 0,0 ; - A(\widetilde \xi_{2i-1})+j +n -1).
\end{align*}
Thus $N(x_1\boxtimes \kappa_j^i) = M(\widetilde \xi_{2i-1})$ and $A_{\textrm{rel}}(x_1\boxtimes \kappa_j^i) = - A(\widetilde \xi_{2i-1})+j +n -1$.

We will also be interested in the $\delta$-grading $\delta = M-A$, where $M$ is the $w$-normalized Maslov grading. Since  $M=N+2A$ (see [Section 11.3]\cite{bfh2}, for example), we can also compute this $\delta$-grading as $\delta = N+A$. Thus, denoting the  $\delta$-grading up to overall translation by  $\delta_{\rm rel}$,  we see that we can compute it as $\delta_{\rm rel} = N + A_{\rm rel}$.  In the above example, $\delta_{\textrm{rel}}(x_1\boxtimes \kappa_j^i) =  M(\widetilde \xi_{2i-1}) - A(\widetilde \xi_{2i-1})+j +n -1$.

Proceeding in this way, we find the gradings $N$, $A_{\textrm{rel}}$, and $\delta_{\textrm{rel}}$ of all the remaining  types of generators of $\cfahat(\mathcal H)\boxtimes \cfdhat(X_{K,n})$. We summarize the results in Table~\ref{tab:tensor-gr}. 

\begin{table}[h]
\begin{center}
\resizebox{\textwidth}{!}{
  \begin{tabular}{ |    c    ||    l      |     l     ||   l   |}
    \hline
    Generator & $N$ & $A_{\rm rel}$ & $\delta_{\rm rel}$ \\ 
    \hline
    \hline
          \multicolumn{4}{|c|}{Generators arising from the vertical and the horizontal chains of $\cfdhat(X_{K,n})$:}\\ \hline
    \hline
    $x_0\boxtimes s$ & $M(\tilde s)- 2 A(
\tilde s)$ & $-A(\tilde s)$  & $M(\tilde s)- 3 A(
\tilde s)$  \\ \hline
    $x_2\boxtimes s$ & $M(\tilde s)+1$ & $-A(\tilde s)+n$  & $M(\tilde s)- A(
\tilde s)+n+1$  \\ \hline
   $y_2\boxtimes s$ & $M(\tilde s)$ & $-A(\tilde s)+n+1$  & $M(\tilde s)- A(
\tilde s)+n+1$  \\ \hline
     $x_4\boxtimes s$ & $M(\tilde s)+ 2A(
\tilde s) -2n+1$ & $- A(
\tilde s)+2n+1$  & $M(\tilde s) + A(
\tilde s)+2$  \\ \hline
     $y_4\boxtimes s$ & $M(\tilde s)+ 2A(
\tilde s) -2n$ & $- A(
\tilde s)+2n+2$  & $M(\tilde s) + A(
\tilde s)+2$  \\ \hline
     $x_1\boxtimes \lambda_j^i$ & $M(\widetilde \eta_{2i-1}) - 2A(\widetilde \eta_{2i-1})$ & $ - A(\widetilde \eta_{2i-1})-j$  & $M(\widetilde \eta_{2i-1}) - 3A(\widetilde \eta_{2i-1})- j$ \\ \hline
     $y_1\boxtimes \lambda_j^i$ & $M(\widetilde \eta_{2i-1}) - 2A(\widetilde \eta_{2i-1})-1$ & $ - A(\widetilde \eta_{2i-1})-j+2$  & $M(\widetilde \eta_{2i-1}) - 3A(\widetilde \eta_{2i-1})- j+1$ \\ \hline
     $x_1\boxtimes \kappa_j^i$ & $M(\widetilde \xi_{2i-1})$ & $ - A(\widetilde \xi_{2i-1})+j +n-1$  & $M(\widetilde \xi_{2i-1}) - A(\widetilde \xi_{2i-1})+j + n-1$ \\ \hline
          $y_1\boxtimes \kappa_j^i$ & $M(\widetilde \xi_{2i-1}) - 1$ & $ - A(\widetilde \xi_{2i-1})+j +n+ 1$  & $M(\widetilde \xi_{2i-1}) - A(\widetilde \xi_{2i-1})+j + n$ \\ \hline
          $x_3\boxtimes \lambda_j^i$ & $M(\widetilde \eta_{2i-1}) +2j$ & $ - A(\widetilde \eta_{2i-1})-j + n+1$  & $M(\widetilde \eta_{2i-1}) - A(\widetilde \eta_{2i-1}) + j +n+1$ \\ \hline
     $y_3\boxtimes \lambda_j^i$ & $M(\widetilde \eta_{2i-1}) +2j-1$ & $ - A(\widetilde \eta_{2i-1})-j + n+2$  & $M(\widetilde \eta_{2i-1}) - A(\widetilde \eta_{2i-1}) + j +n+1$ \\ \hline
     $x_3\boxtimes \kappa_j^i$ & $M(\widetilde \xi_{2i-1})+2A(\widetilde \xi_{2i-1})-2j -2n+2$ & $ - A(\widetilde \xi_{2i-1})+j +2n$  & $M(\widetilde \xi_{2i-1}) + A(\widetilde \xi_{2i-1})-j + 2$ \\ \hline
          $y_3\boxtimes \kappa_j^i$ & $ M(\widetilde \xi_{2i-1})+2A(\widetilde \xi_{2i-1})-2j -2n+1$ & $ - A(\widetilde \xi_{2i-1})+j +2n+1$  & $M(\widetilde \xi_{2i-1}) + A(\widetilde \xi_{2i-1})-j + 2$ \\ \hline
           $x_5\boxtimes \lambda_j^i$ & $M(\widetilde \eta_{2i-1}) - 2A(\widetilde \eta_{2i-1})-1$ & $ - A(\widetilde \eta_{2i-1})-j+2$  & $M(\widetilde \eta_{2i-1}) - 3A(\widetilde \eta_{2i-1})- j+1$ \\ \hline
     $y_5\boxtimes \lambda_j^i$ & $M(\widetilde \eta_{2i-1}) - 2A(\widetilde \eta_{2i-1})-2$ & $ - A(\widetilde \eta_{2i-1})-j+3$  & $M(\widetilde \eta_{2i-1}) - 3A(\widetilde \eta_{2i-1})- j+1$ \\ \hline
     $x_5\boxtimes \kappa_j^i$ & $M(\widetilde \xi_{2i-1})-1$ & $ - A(\widetilde \xi_{2i-1})+j +n+1$  & $M(\widetilde \xi_{2i-1}) - A(\widetilde \xi_{2i-1})+j + n$ \\ \hline
          $y_5\boxtimes \kappa_j^i$ & $M(\widetilde \xi_{2i-1}) - 2$ & $ - A(\widetilde \xi_{2i-1})+j +n+ 2$  & $M(\widetilde \xi_{2i-1}) - A(\widetilde \xi_{2i-1})+j + n$ \\ \hline
$x_6\boxtimes \lambda_j^i$ & $M(\widetilde \eta_{2i-1}) +2j-3$ & $ - A(\widetilde \eta_{2i-1})-j + n+3$  & $M(\widetilde \eta_{2i-1}) - A(\widetilde \eta_{2i-1}) + j +n$ \\ \hline
     $y_6\boxtimes \lambda_j^i$ & $M(\widetilde \eta_{2i-1}) +2j-4$ & $ - A(\widetilde \eta_{2i-1})-j + n+4$  & $M(\widetilde \eta_{2i-1}) - A(\widetilde \eta_{2i-1}) + j +n$ \\ \hline
     $x_6\boxtimes \kappa_j^i$ & $M(\widetilde \xi_{2i-1})+2A(\widetilde \xi_{2i-1})-2j -2n-1$ & $ - A(\widetilde \xi_{2i-1})+j +2n+2$  & $M(\widetilde \xi_{2i-1}) + A(\widetilde \xi_{2i-1})-j + 1$ \\ \hline
          $y_6\boxtimes \kappa_j^i$ & $ M(\widetilde \xi_{2i-1})+2A(\widetilde \xi_{2i-1})-2j -2n-2$ & $ - A(\widetilde \xi_{2i-1})+j +2n+3$  & $M(\widetilde \xi_{2i-1}) + A(\widetilde \xi_{2i-1})-j + 1$ \\ \hline
          \hline
          \multicolumn{4}{|c|}{Generators arising from the unstable chain of $\cfdhat(X_{K,n})$ when $n<2\tau(K)$:}\\ \hline
    \hline
    $x_1\boxtimes \mu_j$ & $0$ & $-\tau(K)+j+n-1$  & $-\tau(K)+j+n-1$  \\ \hline
    $y_1\boxtimes \mu_j$ & $-1$ & $-\tau(K)+j+n+1$  & $-\tau(K)+j+n$  \\ \hline
    $x_3\boxtimes \mu_j$ & $2\tau(K)  - 2j - 2n +2$ & $-\tau(K)+j+2n$  & $\tau(K)-j+2$  \\ \hline
    $y_3\boxtimes \mu_j$ &  $2\tau(K)  - 2j - 2n +1$ & $-\tau(K)+j+2n +1$  & $\tau(K)-j+2$  \\ \hline
    $x_5\boxtimes \mu_j$ & $-1$ & $-\tau(K)+j+n+1$  & $-\tau(K)+j+n$  \\ \hline
    $y_5\boxtimes \mu_j$ & $-2$ & $-\tau(K)+j+n+2$  & $-\tau(K)+j+n$  \\ \hline
    $x_6\boxtimes \mu_j$ & $2\tau(K)  - 2j - 2n -1$ & $-\tau(K)+j+2n+2$  & $\tau(K)-j+1$  \\ \hline
    $y_6\boxtimes \mu_j$ &  $2\tau(K)  - 2j - 2n -2$ & $-\tau(K)+j+2n +3$  & $\tau(K)-j+1$  \\ \hline
              \hline
          \multicolumn{4}{|c|}{Generators arising from the unstable chain of $\cfdhat(X_{K,n})$ when $n>2\tau(K)$:}\\ \hline
    \hline
    $x_1\boxtimes \mu_j$ & $1$ & $-\tau(K)-j+n$  & $-\tau(K)-j+n+1$  \\ \hline
    $y_1\boxtimes \mu_j$ & $0$ & $-\tau(K)-j+n+2$  & $-\tau(K)-j+n+2$  \\ \hline
    $x_3\boxtimes \mu_j$ & $2\tau(K)  + 2j - 2n +1$ & $-\tau(K)-j+2n+1$  & $\tau(K)+j+2$  \\ \hline
    $y_3\boxtimes \mu_j$ &  $2\tau(K)  + 2j - 2n$ & $-\tau(K)-j+2n +2$  & $\tau(K)+j+2$  \\ \hline
    $x_5\boxtimes \mu_j$ & $0$ & $-\tau(K)-j+n+2$  & $-\tau(K)-j+n+2$  \\ \hline
    $y_5\boxtimes \mu_j$ & $-1$ & $-\tau(K)-j+n+3$  & $-\tau(K)-j+n+2$  \\ \hline
    $x_6\boxtimes \mu_j$ & $2\tau(K)  + 2j - 2n -2$ & $-\tau(K)-j+2n+3$  & $\tau(K)+j+1$  \\ \hline
    $y_6\boxtimes \mu_j$ &  $2\tau(K)  + 2j - 2n -3$ & $-\tau(K)-j+2n +4$  & $\tau(K)+j+1$  \\ \hline
  \end{tabular}
  }
\end{center}
  \caption{The $z$-normalized Maslov gradings, relative Alexander gradings, and relative $\delta$-gradings on all  possible types of generators of the complex $\cfahat(\mathcal H)\boxtimes \cfdhat(X_{K,n})$.} 
  \label{tab:tensor-gr}
\end{table}

\subsection{The differential on $\cfahat(\mathcal H)\boxtimes \cfdhat(X_{K,n})$}\label{ssec:tensor-d}

Setting $U=0$ in $\cfam(\mathcal H)$, we obtain $\cfahat(\mathcal H)$, see Figure~\ref{fig:cfahat}.

  \begin{figure}[h]\label{CFA}
    \centering
    {
    \begin{tikzpicture}[xscale=0.9,yscale=0.9]
      \node at (0,2) (x0) {$x_0$};
      \node at (1,0) (x1) {$x_1$};
      \node at (2,2) (x2) {$x_2$};
      \node at (4.5,0) (x3) {$x_3$};
      \node at (5.5,2) (x4) {$x_4$};
      \node at (3.5,2) (y2) {$y_2$};
      \node at (9.5,0) (x5) {$x_5$};
      \node at (10.5,0) (x6) {$x_6$};
      \node at (8.5,0) (y1) {$y_1$};
      \node at (6.5,0) (y3) {$y_3$};
      \node at (7.5,2) (y4) {$y_4$};
      \node at (11.5,0) (y5) {$y_5$};
      \node at (12.5,0) (y6) {$y_6$};
      \draw[algarrow] (x1) to node[below, xshift=-7pt, yshift=5pt] {$\rho_2$} (x0);
      \draw[algarrow] (x2) to node[below ,xshift=7pt, yshift=5pt] {$\rho_1$} (x1);
      \draw[algarrow] (x4) to node[below, xshift=7pt, yshift=5pt] {$\rho_1$} (x3);
      \draw[algarrow] (y4) to node[below, xshift=-9pt, yshift=5pt] {$\rho_1$} (y3);
      \draw[algarrow] (x2) to node[below] {$\rho_{12}$} (x0);
      \draw[algarrow] (x3) to node[below, xshift=-7pt, yshift=5pt] {$\rho_2$} (y2);
      \draw[algarrow] (x4) to node[below] {$\rho_{12}$} (y2);
      \draw[algarrow] (y3) to node[above] {$\rho_2\rho_1$} (y1);
      \draw[algarrow] (y4) to node[below, xshift=18pt, yshift=5pt, below] {$\rho_{12}\rho_1$} (y1);
    \end{tikzpicture}	
    }	
    \caption{The type $A$ structure $\cfahat(\mathcal{H})$}
    \label{fig:cfahat}
  \end{figure}

Since $\cfahat(\mathcal H)$ is bounded, we can  boxtensor of $\cfahat(\mathcal H)$ with any type $D$ structure, and so we use the model  of $\cfdhat(X_{K,n})$ described in Section~\ref{sec:prelim} without having to analyze its boundedness (in general,  $\cfdhat(X_{K,n})$ may be an unbounded structure, and we may have to replace it with an equivalent bounded one).   By \cite[Lemmas 3.2-3.3]{hepsilon}, we may assume that our bases $\boldsymbol{\eta} = \{\eta_0, \ldots, \eta_{2k}\}$ and $\boldsymbol{\xi} = \{\xi_0, \ldots, \xi_{2k}\}$ are indexed so that  when $\epsilon(K)=0$ we have $\eta_0 = \xi_0$, when $\epsilon(K)=1$ we have $\xi_0=\eta_2$, and when $\epsilon(K)=-1$ we have $\eta_0=\xi_1$. 

To compute $\bdy^{\boxtimes}$, we pair the multiplication maps represented by the arrows in Figure~\ref{fig:cfahat} with (sequences of) coefficient maps in $\cfdhat(X_{K,n})$. From Figure~\ref{fig:cfahat}, we see that we only need to consider the length one sequences $D_1$, $D_2$, $D_{12}$, and the length two sequences $D_2, D_1$ and $D_{12}, D_1$. 

The coefficient map $D_1$ is seen once from $\xi_{2i-1}$ to $\kappa_1^i$ in each vertical chain, and once from $\xi_0$ to $\mu_1$ in the unstable chain when $n<2\tau(K)$. The map $D_2$ is seen once from  $\lambda_{l_i}^i$ to $\eta_{2i}$  in each  horizontal chain, and once from $\mu_m$ to $\eta_0$ in the unstable chain when $n>2\tau(K)$. The map $D_{12}$ is only seen once, from $\xi_0$ to $\eta_0$ in the unstable chain, when $n=2\tau(K)$. The sequence $D_2, D_1$ is seen from $\lambda_{l_i}^i$ to $\kappa_1^j$ whenever $a_{ij}|_{U=0}\neq 0$, once when $\epsilon(K) = 1$ and $n<2\tau(K)$ (because we've assumed that $\xi_0 = \eta_2$), and once from $\mu_m$ to $\kappa_1^1$ when $\epsilon(K)= -1$ and $n>2\tau(K)$ (because we've assumed that $\eta_0 = \xi_1$). The sequence $D_{12}, D_1$ appears only once, from $\xi_0$ to $\kappa_1^1$, when $\epsilon= -1$ and $n=2\tau(K)$ (because we've assumed that $\eta_0 = \xi_1$).

The following nontrivial differentials occur regardless of the value of $\epsilon(K)$ and the framing $n$.
\begin{align*}
\dbox (x_1\boxtimes \lambda_{l_i}^i) &= x_0\boxtimes \eta_{2i} & i=1,\ldots, k &\\
\dbox (x_3\boxtimes \lambda_{l_i}^i) &= y_2\boxtimes \eta_{2i}& i=1,\ldots, k &\\
\dbox (x_2\boxtimes \xi_{2i-1}) &= x_1\boxtimes \kappa_1^i & i=1,\ldots, k &\\
\dbox (x_4\boxtimes \xi_{2i-1}) &= x_3\boxtimes \kappa_1^i & i=1,\ldots, k & \\
\dbox (y_4\boxtimes \xi_{2i-1}) &= y_3\boxtimes \kappa_1^i & i=1,\ldots, k &\\
\dbox (y_3\boxtimes \lambda_{l_i}^i) &= y_1\boxtimes \kappa_1^j & \textrm{ whenever } a_{ij}|_{U=0}\neq 0 &
\end{align*}
where the first two rows of differentials come from pairings for $D_2$, the next three rows come from pairings for $D_1$, and the last row comes from pairings for the sequence $D_2, D_1$.

There are also the following additional nontrivial differentials that depend on the framing $n$.  

When $n<2\tau(K)$, we have
\[
\dbox (x_2\boxtimes \xi_0) = x_1\boxtimes \mu_1\qquad
\dbox (x_4\boxtimes \xi_0) = x_3\boxtimes \mu_1 \qquad
 \dbox (y_4\boxtimes \xi_0) = y_3\boxtimes \mu_1
\]
where all three differentials come from pairings for $D_1$, as well as 
\[\dbox(y_3\boxtimes\lambda_{l_1}^1) = y_1\boxtimes \mu_1\]
if $\epsilon(K)=1$, coming from a pairing for the sequence $D_2, D_1$.

When $n = 2\tau(K)$, we have
\[
\dbox (x_2\boxtimes \xi_0) = x_0\boxtimes \eta_0\qquad
\dbox (x_4\boxtimes \xi_0) = y_2\boxtimes \eta_0 
\]
where both differentials come from pairings for $D_{12}$, as well as 
\[ \dbox(y_4\boxtimes\xi_0) = y_1\boxtimes \kappa_1^1\]
if $\epsilon(K)=-1$, coming from a pairing for the sequence $D_{12}, D_1$. 

When $n>2\tau(K)$, we have
\[
\dbox (x_1\boxtimes \mu_m) = x_0\boxtimes \eta_0\qquad
\dbox (x_3\boxtimes \mu_m) = y_2\boxtimes \eta_0
\]
where both differentials come from pairings for $D_2$, as well as 
\[ \dbox(y_3\boxtimes\mu_m) = y_1\boxtimes \kappa_1^1\]
if $\epsilon(K)=-1$, coming from a pairing for the sequence $D_{2}, D_1$.



\section{$\delta$-thickness of $Q_n(K)$}\label{sec:deltathick}

We start by proving that $Q_n(K)$ is $\delta$-thick for all integers $n$ and knots $K$, except for two satellites obtained when $K$ is the unknot and $n$ is $-1$ or $0$,  which are $\delta$-thin.

\begin{proof}[Proof of Theorem~\ref{theorem_delta_thick}]
Recall that we have horizontally and vertically simplified bases $\widetilde{\boldsymbol{\eta}} = \{\widetilde{\eta}_0, \ldots, \widetilde{\eta}_{2k}\}$ and $\widetilde{\boldsymbol{\xi}} = \{\widetilde{\xi}_0, \ldots, \widetilde{\xi}_{2k}\}$, respectively, for $\cfkm(K)$ that induce bases $\boldsymbol{\eta} = \{\eta_0, \ldots, \eta_{2k}\}$ and $\boldsymbol{\xi} = \{\xi_0, \ldots, \xi_{2k}\}$ for the subspace $\iota_0  \cfdhat(X_{K,n})$. Since $\widetilde{\boldsymbol{\eta}}$ and $\widetilde{\boldsymbol{\xi}}$ are simplified bases for  $\cfkm(K)$, we can treat them as bases of $\hfkhat(K)$ as well. In particular, this implies that $\mathrm{rk}\, \hfkhat(K) = 2k+1$.
We will make use of the following simple lemma.

\begin{lemma}\label{lem:etaoddneg}
If $K$ is not the unknot or a trefoil, then there is some $\widetilde\eta_{2t-1} \in \widetilde{\boldsymbol{\eta}}$ with $A(\widetilde\eta_{2t-1}) < 0$.
\end{lemma}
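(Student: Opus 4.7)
The plan is to combine the $A \leftrightarrow -A$ symmetry of $\hfkhat(K)$ with the observation that $\widetilde{\boldsymbol{\eta}}$ descends to a basis of $\hfkhat(K)$ in which each horizontal pair satisfies
\[
A(\widetilde\eta_{2i}) = A(\widetilde\eta_{2i-1}) + l_i > A(\widetilde\eta_{2i-1}),
\]
as can be read off from the grading computation for $\gr(\eta_{2i})$ starting from $\gr(\lambda_{l_i}^i)$ in Section~\ref{ssec:cfd-gr} (or directly from the $U$-equivariance of $\partial$ on $\cfkm(K)$).

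Since $K$ is nontrivial, $g(K) \geq 1$, and by the symmetry of $\hfkhat$ there is a basis element $\widetilde\eta_j$ with $A(\widetilde\eta_j) = -g(K)$. If $j = 2t-1$ we are done; if $j = 2t$, then $A(\widetilde\eta_{2t-1}) = -g(K) - l_t < 0$ and we are again done. The only remaining case is $j = 0$, which forces $\tau(K) = g(K) =: g$. In that case I would apply the same trichotomy to a basis element $\widetilde\eta_m$ realizing $A = g$: the options $m = 0$ (wrong grading) and $m = 2s-1$ (which would force $A(\widetilde\eta_{2s}) > g$, impossible) are both ruled out, so $m = 2s$ and $A(\widetilde\eta_{2s-1}) = g - l_s$. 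The subcases $l_s > g$ and $l_s < g$ close directly (the former by inspection, the latter by feeding the new negative-grading generator produced by the symmetry at $A = l_s - g$ back into the original trichotomy).

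The only subcase that survives is $l_s = g$, which places $\widetilde\eta_{2s-1}$ at $A = 0$. Iterating the symmetry argument excludes any additional horizontal pair, so $\hfkhat(K) \cong \F^3$ supported at $A \in \{-g, 0, g\}$ with rank one in each Alexander grading. This forces $K$ to be an L-space knot with Alexander polynomial $t^g - 1 + t^{-g}$. For $g = 1$, Ghiggini's theorem (genus-one L-space knots in $S^3$ are right-handed trefoils) contradicts the hypothesis. For $g \geq 2$, the horizontal-arrow relation $M(\widetilde\eta_{2s}) - M(\widetilde\eta_{2s-1}) = 2l_s - 1$ combined with the symmetry-forced value $M(\widetilde\eta_{2s}) = 0$ gives $M(\widetilde\eta_{2s-1}) = 1 - 2g$, which must be compared with the Maslov grading forced on the unique middle generator of an L-space knot with this specific Alexander polynomial; the resulting inconsistency rules out this subcase. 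The main obstacle is precisely this final incompatibility check for $g \geq 2$; the earlier case analysis is a direct walk through the $\hfkhat$ symmetry.
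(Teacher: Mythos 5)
Your case analysis via the symmetry $\hfkhat_m(K,a)\cong\hfkhat_{m-2a}(K,-a)$ is a genuinely different route from the paper's (the paper gives a three-line pigeonhole count: under the contrary assumption the $k$ odd-indexed and $k$ nonzero even-indexed elements of $\widetilde{\boldsymbol\eta}$ are all nonnegative resp.\ positive, symmetry forces at least $k$ further negative generators, so $2k+1=\mathrm{rk}\,\hfkhat(K)\geq 3k$, contradicting $\mathrm{rk}\,\hfkhat(K)\geq 5$). Most of your trichotomies do go through. But the terminal subcase $l_s=g$ contains a genuine gap, and it is exactly the step you flag as "the main obstacle." First, the inference that $K$ is an L-space knot because $\hfkhat(K)$ has rank one in each supported Alexander grading is unjustified: that condition is a known consequence of admitting an L-space surgery, not a known characterization. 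Second, and more seriously, the Maslov-grading incompatibility you hope to extract does not exist. Carrying out your own computation: symmetry forces the generator at $A=g$ to sit in $M=0$, the horizontal-arrow relation $M(\widetilde\eta_{2s})=M(\widetilde\eta_{2s-1})+2l_s-1$ then puts the middle generator at $(A,M)=(0,1-2g)$, and $\widetilde\eta_0$ sits at $(-g,-2g)$. This bigraded data passes every formal check available to you: it is symmetric, its Euler characteristic is $t^g-1+t^{-g}$ (the $t^0$-coefficient $-1$ only requires $1-2g$ to be odd, which it is), and the vertical arrow of length $g$ from the middle generator to $\widetilde\eta_0$ drops $M$ by exactly $1$ as required. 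For $g=1$ this is literally the right-handed trefoil; for $g\geq 2$ nothing in the grading package rules it out.

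What actually kills this configuration for $g\geq 2$ is a detection theorem, not arithmetic: $\mathrm{rk}\,\hfkhat(K)=3$ forces $K$ to be a trefoil (Hedden--Watson, the same \cite{hw} the paper cites), or alternatively one needs the nonvanishing of $\hfkhat(K,g-1)$ for fibered knots. If you close your last case by citing rank-$3$ trefoil detection, your argument becomes correct --- but at that point you have reproduced, after a long detour, precisely the external input that powers the paper's counting argument, which reaches the contradiction without tracking any individual generators. So the proposal is repairable, but as written the final step would fail.
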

\begin{proof}
Assume, to the contrary, $\widetilde\eta_1,\widetilde\eta_3, \ldots \widetilde\eta_{2k-1}$ all have nonnegative Alexander degree. Recall that the basis $\widetilde{\boldsymbol{\eta}}$ is indexed so that there is a horizontal arrow from $\widetilde\eta_{2i-1}$ to $\widetilde\eta_{2i}$ for each $i$. Since the horizontal arrows strictly increase the Alexander degree, it follows that $\widetilde\eta_2, \widetilde\eta_4 \ldots, \widetilde\eta_{2k}$ all have positive Alexander degree. By symmetry of $\hfkhat$, there must be at least $k$ generators in $\widetilde{\boldsymbol{\eta}}$ with negative Alexander degree. So $\mathrm{rk}\, \hfkhat(K) \geq 3k$.

On the other hand, since the dimension of $\hfkhat$ is always odd and detects both the trefoil \cite{hw} and the unknot \cite{hfkg}, we have $\mathrm{rk} \, \hfkhat(K) = 2k+1 \geq 5$. This is a contradiction.
\end{proof}

Now consider $\cfdhat(X_{K,n})$, and consider the basis $\boldsymbol{\eta}$ for $\iota_0  \cfdhat(X_{K,n})$. At each odd-indexed element $\eta_{2i-1}$, we have the following arrows: 
\begin{enumerate}
\item[-] An outgoing $D_3$-arrow to $\lambda_1^{i}$.
\item[-] An outgoing $D_1$-arrow to $\kappa_1^{j}$, whenever $\xi_{2j-1}$ appears with nonzero coefficient in $\eta_{2i-1}$.
\item[-] An outgoing $D_{123}$-arrow to $\kappa_{k_j}^{j}$, whenever $\xi_{2j}$ appears with nonzero coefficient in $\eta_{2i-1}$.
\item[-] An outgoing arrow labelled $D_1$, $D_{12}$, or $D_{123}$, depending on the framing, if $\xi_0$ appears with nonzero coefficient in $\eta_{2i-1}$. 
\end{enumerate}
Regardless of the framing and how the bases $\boldsymbol{\eta}$ and $\boldsymbol{\xi}$ are related, there are no incoming arrows at $\eta_{2i-1}$. Since there are no outgoing edges at the generators $x_0$ and $y_2$ of $\cfahat(\mathcal{H})$, the elements $x_0\boxtimes \eta_{2i-1}$ and $y_2\boxtimes \eta_{2i-1}$ survive in the homology of $\cfahat(\mathcal{H})\boxtimes \cfdhat(X_{K,n})$, i.e. they represent distinct generators of $\hfkhat(Q_n(K))$. 

Similarly, since the only outgoing arrows at each even-indexed element $\xi_{2i}$ are labeled $D_3$ or $D_{123}$, and there are no matching labels in our model for $\cfahat(V,Q)$, the elements $x_2\boxtimes \xi_{2i}$ and $x_4\boxtimes \xi_{2i}$  are all nonzero in $\hfkhat(Q_n(K))$.

Next, we consider the relative $\delta$-gradings of the above generators of homology. From Table~\ref{tab:tensor-gr}, we have
\begin{align*}
\delta_{\textrm{rel}}(x_0\boxtimes\eta_{2i-1}) &=M(\widetilde\eta_{2i-1})-3 A(\widetilde\eta_{2i-1})\\
\delta_{\textrm{rel}}(y_2\boxtimes\eta_{2i-1}) &=  M(\widetilde\eta_{2i-1}) -A(\widetilde\eta_{2i-1})+n+1\\
\delta_{\textrm{rel}}(x_4\boxtimes\xi_{2i}) &=  M(\widetilde\xi_{2i})+A(\widetilde\xi_{2i})+2.
\end{align*}

{\bf{Case 1:}} Suppose there are two elements $\widetilde\eta_{2t-1}$ and $\widetilde\eta_{2s-1}$ with distinct $\delta$-degrees. Then  $\delta(y_2\boxtimes\eta_{2t-1}) - \delta(y_2\boxtimes\eta_{2s-1}) = \delta_{\textrm{rel}}(y_2\boxtimes\eta_{2t-1}) - \delta_{\textrm{rel}}(y_2\boxtimes\eta_{2s-1}) \neq 0$, so $Q_n(K)$ is $\delta$-thick. 

{\bf{Case 2:}} Suppose all odd-indexed elements in $\widetilde{\boldsymbol{\eta}}$ are in the same $\delta$-degree. 

{\bf{Case 2.1:}}  Suppose there exist two elements $\widetilde\eta_{2t-1}$ and $\widetilde\eta_{2s-1}$  in different Alexander degrees. Then 
\begin{align*}
\delta(x_0\boxtimes\eta_{2t-1}) - \delta(x_0\boxtimes\eta_{2s-1}) &=  \delta_{\textrm{rel}}(x_0\boxtimes\eta_{2t-1}) - \delta_{\textrm{rel}}(x_0\boxtimes\eta_{2s-1}) \\
&= M(\widetilde\eta_{2t-1})-3 A(\widetilde\eta_{2t-1}) - M(\widetilde\eta_{2s-1})+3 A(\widetilde\eta_{2s-1}) \\
&=\delta(\widetilde\eta_{2t-1}) - \delta(\widetilde\eta_{2s-1}) - 2A(\widetilde\eta_{2t-1}) + 2A(\widetilde\eta_{2s-1})\\
&= - 2A(\widetilde\eta_{2t-1}) + 2A(\widetilde\eta_{2s-1})\neq 0,
\end{align*}
so $Q_n(K)$ is $\delta$-thick. 

{\bf{Case 2.2:}} Suppose all odd-indexed elements in $\widetilde{\boldsymbol{\eta}}$ have the same bidegree $(M, A)$.  First, assume $K$ is neither the unknot nor a trefoil. Lemma \ref{lem:etaoddneg} implies that the Alexander degree $A$ in which the odd-indexed elements in $\widetilde{\boldsymbol{\eta}}$ are supported is negative. By symmetry of  $\hfkhat$, there are $k\geq 2$ generators in $\widetilde{\boldsymbol{\eta}}$ with bidegree $(M-2A, -A)$, and the one remaining generator has Alexander degree zero.

Consider the basis $\widetilde{\boldsymbol{\xi}}$. It also has $k$ elements in Alexander degree $A$ and $k$ elements in Alexander degree $-A$. Recall that $k \geq 2$. Since $A <0$ and the vertical arrows strictly decrease the Alexander grading, there is at least one element $\widetilde\xi_{2t}$ in bidegree $(M, A)$ with $t\geq 1$.
We see that $\delta(x_0\boxtimes\eta_{3}) - \delta(x_4\boxtimes\xi_{2t}) = \delta_{\textrm{rel}}(x_0\boxtimes\eta_{3}) - \delta_{\textrm{rel}}(x_4\boxtimes\xi_{2t}) = -4A-2$, which is nonzero, since $A$ is an integer. So  $Q_n(K)$ is $\delta$-thick. 

Now suppose $K$ is the unknot or a trefoil. If $K$ is the unknot, then $Q_n(K)$ is $\delta$-thick exactly when $n\notin\{-1,0\}$, by Proposition~\ref{thm:alt-th}. Note that in this case $Q_0(K)$ is the unknot and $Q_{-1}(K)$ is the $5_2$ knot. If $K$ is a trefoil, then by Proposition~\ref{thm:alt-th}, $Q_n(K)$ is $\delta$-thick for all values of $n$.
\end{proof}

Further, we show that as the number of twists on the Mazur pattern increases, the $\delta$-thickness increases without bound.

\begin{proof}[Proof of Theorem~\ref{thm:th-unbounded}]
Let  $n<2\tau(K)$. Observe that for any generator $\mu_i$ along the unstable chain of $\cfdhat(X_{K,n})$, the tensor product $x_6\boxtimes \mu_i$ survives in the homology of $\cfahat(\mathcal{H}) \boxtimes \cfdhat(X_{K,n})$. Further, the generators $x_6\boxtimes \mu_i$ all have distinct $\delta$-gradings. In particular, 
\[\mathrm{th}(Q_n(K))\geq \delta(x_6\boxtimes \mu_1) - \delta(x_6\boxtimes \mu_{2\tau(K)-n}) = 2\tau(K)-n-1.\]
Similarly, when $n>2\tau(K)$, we have that every tensor product of the form $x_6\boxtimes \mu_i$ survives in the homology of $\cfahat(\mathcal{H}) \boxtimes \cfdhat(X_{K,n})$ and
\[\mathrm{th}(Q_n(K))\geq \delta(x_6\boxtimes \mu_{n-2\tau(K)}) - \delta(x_6\boxtimes \mu_1) = -2\tau(K)+n-1.\]
Thus, 
\[\lim_{n \rightarrow \pm \infty} \mathrm{th}(Q_n(K)) = \infty.\qedhere\]
\end{proof}

Below, we prove Theorem~\ref{thm:th-bounded},  which states that the analogue of Theorem~\ref{thm:th-unbounded} does not hold for all patterns.

\begin{proof}[Proof of Theorem~\ref{thm:th-bounded}]

Let $P$ be the $(2,1)$-cable in the solid torus, and let $K$ be a $\delta$-thin knot.

In \cite[Section 4]{cables}, using a doubly-pointed Heegaard diagram $\mathcal H(P)$, the type $A$ structure $\cfahat(\mathcal H(P))$ was computed. It has three generators: $a$, $b_1$, and $b_2$. Generator $a$ lies in idempotent $\iota_0$, while generators $b_1$ and $b_2$ lie in idempotent $\iota_1$. The only nontrivial multiplication map is $m_2(a, \rho_1) = b_2$. The grading lies in the set $\langle h_A'\rangle\backslash G$, where $h_A' = (-\frac 1 2;0 , 1; 2)$, and the three generators are graded as follows:
\begin{align*}
\gr(a) &= (0; 0,0; 0)\in \langle h_A'\rangle\backslash G\\
\gr(b_1) &= \textstyle (\frac 1 2 ; \frac 1 2,-\frac 1 2 ; -1)\in \langle h_A'\rangle\backslash G\\
\gr(b_2) &= \textstyle (-\frac 1 2 ; \frac 1 2,-\frac 1 2 ; 0) \in \langle h_A'\rangle\backslash G.
\end{align*}

Since $K$ is thin, the type $D$ structure $\cfdhat(X_{K,n})$ is particularly simple. All elements $\xi_i$ in the basis $\boldsymbol{\xi}$ for $\iota_0  \cfdhat(X_{K,n})$ correspond to elements of $\cfkm(K)$ in the same $\delta$-grading. All vertical and horizontal chains have length one, so the generators of $\iota_1 \cfdhat(X_{K,n})$ are of form $\lambda_1^i$, $\kappa_1^i$, and $\mu_j$. Below, we write $\tau$ for $\tau(K)$. By Section~\ref{ssec:cfd-gr}, the gradings in $G/\hd$, where $h_D =(-\frac{n}{2}-\frac{1}{2}; -1, -n; 0)$, are
\begin{align*}
\gr(\xi_i) &= \textstyle \left(M(\widetilde \xi_i)-\frac 3 2 A(
\widetilde \xi_i); 0, -A(\widetilde \xi_i)\right) &\\
\gr(\kappa_1^i) &= \textstyle (M(\widetilde \xi_{2i-1})- A(\widetilde \xi_{2i-1}) - \frac 1 2; -\frac 1 2,-A(\widetilde \xi_{2i-1}) + \frac 1 2)&\\
\gr(\lambda_1^i) &=  \textstyle (M(\widetilde \eta_{2i-1}) - 2A(\widetilde \eta_{2i-1}) - \frac 1 2; \frac 1 2, - A(\widetilde \eta_{2i-1}) -  \frac 1 2 )&\\
\gr(\mu_j) & = \textstyle(- \tau + j - \frac 3 2; -\frac 1 2, -\tau + j - \frac 1 2) &\quad {\textrm{if } n < 2\tau} \\
\gr(\mu_j) &= \textstyle( - \tau - j+\frac 1 2; -\frac 1 2, -\tau - j + \frac 1 2)& \quad {\textrm{if } n > 2\tau}.
\end{align*}

Using the procedure described in the beginning of Section~\ref{ssec:tensor-gr}, we see that 
\begin{align*}
\gr(a\boxtimes\xi_i) & = (-\tau- A(\xi_i); 0,0; 2A(\xi_i)) & \\
\gr(b_1\boxtimes\kappa_1^i) & = (-\tau- A(\xi_{2i-1}); 0,0; 2A(\xi_{2i-1})-1) & \\
\gr(b_2\boxtimes\kappa_1^i) & = (-\tau- A(\xi_{2i-1})-1; 0,0; 2A(\xi_{2i-1})) & \\
\gr(b_1\boxtimes\lambda_1^i) & = (-\tau- 3A(\xi_{2i-1})- 2n-2; 0,0; 2A(\xi_{2i-1})+2n+1) & \\
\gr(b_2\boxtimes\lambda_1^i) & = (-\tau- 3A(\xi_{2i-1})- 2n-3; 0,0; 2A(\xi_{2i-1})+2n+2) & \\
\gr(b_1\boxtimes\mu_j) & = (-2\tau + 2j-2; 0,0; 2\tau-2j+1) & \quad {\textrm{if } n < 2\tau}  \\
\gr(b_2\boxtimes\mu_j) & =(-2\tau + 2j-3; 0,0; 2\tau-2j+2) & \quad {\textrm{if } n < 2\tau} \\
\gr(b_1\boxtimes\mu_j) & = (-2\tau - 2j + 1; 0,0; 2\tau+2j-1)  & \quad {\textrm{if } n > 2\tau}\\
\gr(b_2\boxtimes\mu_j) & =(-2\tau - 2j; 0,0; 2\tau+2j) &   \quad {\textrm{if } n > 2\tau}.
\end{align*}
Note that in the grading computations above, we've used the fact that when $K$ is a thin knot, $M(\xi_{i}) = A(\xi_{i}) - \tau$.

Adding the first and fourth component of each grading, we see that the relative $\delta$-gradings are supported in the set 
\[\{-\tau +A(\xi_i),  -\tau+ A(\xi_{2i-1})-1, -\tau- A(\xi_{2i-1})-1, -1, 0\}.\]
Since $|A(\xi_i)|\leq g(K)$ for all $\xi_i$, the values in this set are bounded above by $-\tau + g$ and below by $-\tau -g-1$. Thus $\mathrm{th}(P_n(K))\leq 2g+1$, regardless of the number of twists $n$ on the pattern. 

Note that to determine the exact thickness for each $n$, we need to consider the differential. We do not need to do this to prove our proposition. The type $D$ structure for the complement of a thin knot is discussed in further detail in Section~\ref{ssec:thin-comp}.
\end{proof}

\section{3-genus and fiberedness of $Q_n(K)$}\label{sec:gf}

In this section, we combine a bordered Floer homology computation with a couple of classical results to calculate the 3-genus of $Q_n(K)$ and to determine when $Q_n(K)$ is fibered, for all $n$ and $K$.

We first focus on the case where $K$ is the right-handed trefoil. To compute the 3-genus of $Q_n(K)$, it suffices to find the extremal Alexander degrees in $\hfkhat(Q_n(K))$ \cite{hfkg}. To determine whether $Q_n(K)$ is fibered, it suffices to compute the rank of $\hfkhat(Q_n(K))$ in the top Alexander degree \cite{pgf, nfibered}. 

Recall that $\widehat{\mathit{HFK}}(Q_n(K)) \cong H_{*}(\cfahat(\mathcal{H}) \boxtimes \cfdhat(X_{K,n}))$, where $\cfdhat(X_{K, n})$ is as follows:
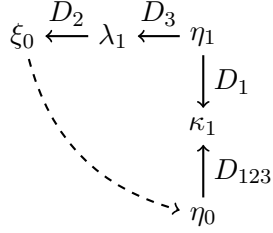
\begin{figure}[h]
  \centering
    {\scalefont{0.9}
    \begin{tikzpicture}[xscale=0.6,yscale=0.6]
      \node at (0,0) (x0) {$\xi_0$};
      \node at (4,-4) (x1) {$\eta_0$};
      \node at (4,0) (x3) {$\eta_1$};
      \node at (2,0) (l1) {$\lambda_1$};
      \node at (4,-2) (k1) {$\kappa_1$};
      \draw[algarrow] (x3) to node[right] {$D_1$} (k1);
      \draw[algarrow] (x1) to node[right] {$D_{123}$} (k1);
      \draw[algarrow] (x3) to node[above] {$D_3$} (l1);
      \draw[algarrow] (l1) to node[above] {$D_2$} (x0);
             \draw[unst1] (x0) to node[below] {} (x1);
    \end{tikzpicture}	
    }	
  \caption{$\cfdhat(X_{K, n})$ for the right-handed trefoil $K$. The dotted arrow represents the unstable chain.}
 \label{fig:cfdrht}
\end{figure}

We use the values from Table~\ref{tab:tensor-gr}, combined with the differential computed in Section~\ref{ssec:tensor-d},  to find the extremal relative Alexander degrees in $\hfkhat(Q_n(K))$ and the generators of $\hfkhat(Q_n(K))$ in those degrees. 

When $n < -1$, the nontrivial differentials  on $\cfahat(\mathcal{H}) \boxtimes \cfdhat(X_{K,n})$ are
\[\dbox (x_2\boxtimes \eta_1) = x_1\boxtimes \kappa_1\quad
\dbox (x_4\boxtimes \eta_1) = x_3\boxtimes \kappa_1 \quad
 \dbox (y_4\boxtimes \eta_1) = y_3\boxtimes \kappa_1\quad 
\dbox(x_1\boxtimes\lambda_1) = x_0\boxtimes \xi_0\]
\[\dbox (x_3\boxtimes \lambda_1) = y_2\boxtimes \xi_0\quad
\dbox (y_3\boxtimes \lambda_1) = y_1\boxtimes \mu_1 \quad
 \dbox (x_2\boxtimes \xi_0) = x_1\boxtimes \mu_1\quad 
\dbox(x_4\boxtimes \xi_0) = x_3\boxtimes \mu_1\]
\[\dbox (y_4\boxtimes \xi_0) = y_3\boxtimes \mu_1.\]
The generators of $\hfkhat(Q_n(K))$, together with their relative Alexander degrees, are given by Table \ref{tab:rhtn<-1}. One can easily verify that the minimum relative Alexander degree is $2n+1$ realized only by generator $x_3 \boxtimes \mu_2$, and the maximum Alexander degree is $3$ realized only by generator $y_5 \boxtimes \mu_{2-n}$.
\begin{table}[h]
\begin{center}
  \begin{tabular}{ | c | c || c | c | }
    \hline
    Generator & $A_\textrm{rel}$ & Generator & $A_\textrm{rel}$  \\ 
    \hline
    \hline
    $x_0\boxtimes \eta_0$  & $1$ & $x_5\boxtimes \kappa_1$  & $n+2$ \\ \hline
    $x_2\boxtimes \eta_0$  & $1+n$ &  $y_5\boxtimes \kappa_1$ & $n+3$\\ \hline
    $y_2\boxtimes \eta_0$  & $2+n$ & $x_6\boxtimes \kappa_1$  & $2n+3$ \\ \hline
    $x_4\boxtimes \eta_0$  & $2n+2$ & $y_6\boxtimes \kappa_1$  & $2n+4$ \\ \hline
    $y_4\boxtimes \eta_0$  & $2n+3$ & $x_1 \boxtimes \mu_j, j \in \{2, \ldots, 2-n\}$  & $n+j-2$ \\ \hline
    $x_0\boxtimes \eta_1$  & $0$ & $y_1 \boxtimes \mu_j, j \in \{2, \ldots, 2-n\}$ & $n+j$ \\ \hline
    $y_2\boxtimes \eta_1$  & $n+1$ & $x_3 \boxtimes \mu_j, j \in \{2, \ldots, 2-n\}$ & $2n+j-1$ \\ \hline
    $y_1\boxtimes \lambda_1$  & $1$ & $y_3 \boxtimes \mu_j, j \in \{2, \ldots, 2-n\}$ & $2n+j$  \\ \hline
    $x_5\boxtimes \lambda_1$  & $1$ &$x_5 \boxtimes \mu_j, j \in \{1, \ldots, 2-n\}$  & $n+j$\\ \hline
    $y_5\boxtimes \lambda_1$  & $2$ & $y_5 \boxtimes \mu_j, j \in \{1, \ldots, 2-n\}$ & $n+j+1$ \\ \hline
    $x_6\boxtimes \lambda_1$  & $n+2$ & $x_6 \boxtimes \mu_j, j \in \{1, \ldots, 2-n\}$ & $2n+j+1$ \\ \hline
    $y_6\boxtimes \lambda_1$  & $n+3$ & $y_6 \boxtimes \mu_j, j \in \{1, \ldots, 2-n\}$  & $2n+j+2$ \\ \hline
    $y_1\boxtimes \kappa_1$ & $n+2$ &  & \\ \hline
  \end{tabular}
\end{center}
 \caption{The generators of $\hfkhat(Q_n(K))$ and their relative Alexander degrees for the right-handed trefoil $K$ and framing $n <-1$.}
  \label{tab:rhtn<-1}
\end{table}

The cases when  $n \geq -1$ are similar. We summarize the results in Table~\ref{tab:rht}.
\begin{table}[h]
\begin{center}
  \begin{tabular}{ | c || c | c || c | c |}
    \hline
    Framing & Min $A_\textrm{rel}$ & Generators  & Max $A_\textrm{rel}$ & Generators \\ 
    \hline
    \hline
    $n<-1$ & $2n+1$ & $x_3\boxtimes \mu_2$  & $3$ & $y_5\boxtimes \mu_{2-n}$ \\ \hline
    $n=-1$ & $-1$ & $x_1\boxtimes \mu_2$, $x_3\boxtimes\mu_2$  & $3$ & $y_5\boxtimes \mu_3$, $y_6\boxtimes \mu_3$ \\ \hline
    $n=0$ & $0$ & $x_0\boxtimes \eta_1$, $x_1\boxtimes\mu_2$  & $4$ & $y_6\boxtimes \kappa_1$, $y_6\boxtimes \mu_2$ \\ \hline
     $n=1$ & $0$ & $x_0\boxtimes \eta_1$  & $6$ & $y_6\boxtimes \kappa_1$ \\ \hline
     $n=2$ & $0$ & $x_0\boxtimes \eta_1$  & $8$ & $y_6\boxtimes \kappa_1$ \\ \hline
     $n>2$ & $0$ & $x_0\boxtimes \eta_1$  & $2n+4$ & $y_6\boxtimes \kappa_1$ \\ \hline
  \end{tabular}
\end{center}
 \caption{The extremal relative Alexander degrees in $\hfkhat(Q_n(K))$, together with the generators of $\hfkhat(Q_n(K))$ in those degrees, for the right-handed trefoil $K$.}
  \label{tab:rht}
\end{table}

Now since the $3$-genus is half the difference between the highest and the lowest Alexander degrees, Table \ref{tab:rht} implies that
\begin{equation*}
g(Q_n(K)) = 
\begin{cases}
                                   1 - n & \text{if $n\leq -1$,} \\
                                   n + 2 & \text{if $n\geq 0$}.                                 
\end{cases}
\end{equation*}

Furthermore, because a knot is fibered if and only if its knot Floer homology has rank $1$ in the highest Alexander degree, we conclude that $Q_n(K)$ is fibered if and only if $n$ is not $-1$ or $0$.

Next we use our work for the right-handed trefoil to calculate the 3-genus of $Q_n(K)$ and to determine when $Q_n(K)$ is fibered, given any nontrivial knot $K$ not equal to the right-handed trefoil and given any number of twists $n$.

\begin{proof}[Proof of Theorem~\ref{thm:g3} for nontrivial knots $K$ not equal to the right-handed trefoil]
We can think of $Q_n(K)$ as the $0$-twisted satellite knot $(Q_n)_0(K)$ with pattern the $n$-twisted Mazur knot $Q_n$ and companion $K$. Since the winding number of $Q_n$ is $1$, by a result attributed to Schubert \cite{schubert},
\begin{equation*}
g(Q_n(K)) = g((Q_n)_0(K)) = g(K) + g(Q_n),
\end{equation*}
where $g(Q_n)$ is a number that depends only on the pattern $Q_n$. This means that we can determine the genus of $Q_n(K)$ if we know the constant $g(Q_n)$. The same equation also tells us that if we know the genus of some test companion $K'$ and the genus of its corresponding satellite $Q_n(K')$, then this constant $g(Q_n)$ is just $g(Q_n(K')) - g(K')$. Take $K'$ to be the right-handed trefoil. From our work above,
\begin{equation*}
g(Q_n) = g(Q_n(K')) - g(K') =
\begin{cases}
                                   - n & \text{if $n\leq -1$} \\
                                    n + 1 & \text{if $n\geq 0$}.                                  
\end{cases} 
\end{equation*}
This completes the proof of Theorem~\ref{thm:g3} for nontrivial $K$.
\end{proof}

\begin{proof}[Proof of Theorem~\ref{thm:fib} for nontrivial knots $K$ not equal to the right-handed trefoil]
Once again, we think of $Q_n(K)$ as the $0$-twisted satellite knot $(Q_n)_0(K)$ with pattern $Q_n$ and companion $K$. By a theorem of Hirasawa-Murasugi-Silver \cite[Theorem 2.1]{hms}, $Q_n(K)$ is fibered if and only if $K$ is fibered and $Q_n$ is fibered in the solid torus. Since the right-handed trefoil is fibered, the above computation shows that the pattern $Q_n$ is fibered in the solid torus if and only if $n$ is not $-1$ or $0$. Therefore, when $n=-1,0$, the satellite $Q_n(K)$ is never fibered, and when $n\neq -1,0$, the satellite $Q_n(K)$ is fibered if and only if $K$ is fibered.
\end{proof}

Lastly, we determine the 3-genus and fiberedness of $Q_n(U)$. 
Again, we use the values from Table~\ref{tab:tensor-gr}. The case analysis is similar to above. 

When $n<-1$, the minimum and the maximum relative Alexander degrees are $2n+2$ and $2$,  realized by $x_3\boxtimes \mu_2$, and $y_5\boxtimes \mu_{-n}$, respectively. 
When $n=-1$, we have $Q_{-1}(U)= 5_2$, which is known to have genus $1$ and not be fibered. 
When $n=0$, we have $Q_0(U) = U$ (genus zero, fibered). 
When $n=1$, the minimum and the maximum relative Alexander degrees are $1$ and $5$,  realized by $x_2\boxtimes \eta_0$, and $y_6\boxtimes \mu_1$, respectively. 
Last, when $n>1$, the minimum and the maximum relative Alexander degrees are $1$ and $2n+3$, realized by $x_1\boxtimes \mu_{n-1}$, and $y_6\boxtimes \mu_1$, respectively. 

It follows that  
\begin{equation*}
g(Q_n(U)) = 
\begin{cases}
                                    - n & \text{if $n\leq 0$,} \\
                                   n + 1 & \text{if $n\geq 1$},                                 
\end{cases}
\end{equation*}
and that $Q_n(U)$ is fibered if and only if $n\neq -1$.

\section{An application to the Cosmetic Surgery Conjecture}\label{sec:csc}
In this section, we prove Theorem~\ref{thm:csc}.

In  \cite[Theorem 2]{hanselman}, Hanselman shows that if $K\subset S^3$ is a nontrivial knot and $S^3_r(K)\cong S^3_{r'}(K)$, for $r\neq r'$, then the pair of surgery slopes $\{r, r'\}$ is either $\{\pm 2\}$ or $\{\pm \frac 1 q\}$ for some positive integer $q$. Further, he shows that if  $\{r, r'\}=\{\pm 2\}$, then $g(K)=2$, and if $\{r, r'\} = \{\pm \frac 1 q\}$, then 
\[q \leq \frac{\mathrm{th}(K)+2g(K)}{2g(K)(g(K)-1)}.\]
In particular, if $g(K) \geq 3$, and 
\begin{equation}\label{eqn:csc}
\frac{\mathrm{th}(K)+2g(K)}{2g(K)(g(K)-1)} <1,
\end{equation}
the knot $K$ automatically satisfies the cosmetic surgery conjecture. Define 
\[f(K) = 2(g(K))^2 - 4g(K) - \mathrm{th}(K),\]  and observe that Inequality~\ref{eqn:csc} is equivalent to the inequality
\begin{equation}\label{eqn:csc2}
f(K)>0. 
\end{equation}
We will show that nontrivial satellites $Q_n(K)$ satisfy the cosmetic surgery conjecture, whenever $K$ is a thin knot (with a small set of unverified exceptions), or an $L$-space knot (of a certain type). 
Except for a few special cases, which we analyze using other tools, we use Inequality~\ref{eqn:csc2}, so we need to combine the genus values from Theorem~\ref{thm:g3} with a computation of the $\delta$-thickness $\mathrm{th}(Q_n(K))$. There are two other tools that we will use for the special cases. The first is an  obstruction of Boyer--Lines, which says that if $\Delta''_J(1)\neq 0$, then the knot $J$ satisfies the cosmetic surgery conjecture; see \cite[Proposition 5.1]{boyerlines}. The second is an obstruction of Ni--Wu, which says that if $\tau(J)\neq 0$, then $J$ satisfies the cosmetic surgery conjecture; see \cite[Theorem 1.2]{niwu}.



\subsection{Thin companions}\label{ssec:thin-comp}

In this subsection, we prove Theorem~\ref{thm:csc} in the case of thin companions. 

We break the argument into cases that depend on $n$ and $K$. In each case, we begin by computing the $\delta$-thickness $\mathrm{th}(Q_n(K))$. We then combine the thickness values with the genus values from Theorem~\ref{thm:g3}, and check whether Inequality~\ref{eqn:csc2} holds. In the isolated cases where Inequality~\ref{eqn:csc2} does not hold, we use other methods to complete the proof.

\subsubsection{The $\delta$-thickness of $Q_n(K)$ when $K$ is thin}\label{sssec:alt-th}
In this subsection, we show that when the companion $K$ is thin,  the $\delta$-thickness of $Q_n(K)$ is as follows. 

\begin{proposition}\label{thm:alt-th}
Suppose the companion $K$ is thin. 
If $K$ is the unknot, then
\begin{equation*}
\mathrm{th}(Q_n(K)) = 
\begin{cases}
                                   -n - 1 & \textrm{if \,} n\leq -1 \\
                                   n & \text{if \,} n\geq 0.                                 
\end{cases}
\end{equation*}
If $K$ is the right-handed trefoil, then
\begin{equation*}
\mathrm{th}(Q_n(K)) = 
\left\{ \begin{array}{ll}
         -n + 1 & \textrm{if \,} n \leq -1\\
         2 & \textrm{if \,} n =0, 1\\
        n+2 & \textrm{if \,} n \geq 2.\end{array} \right.
\end{equation*}
In all other cases, 
\[ 
\mathrm{th}(Q_n(K)) = \begin{cases} 
      2 g(K)-n-1 & \textrm{if \,} n \in (-\infty, -2g(K)] \\
      4 g(K)-2 & \textrm{if \,} n \in [-2g(K)+1, 2 g(K)-2] \\
      2 g(K)+n & \textrm{if \,} n \in [2g(K)-1, \infty).
   \end{cases}
\]
\end{proposition}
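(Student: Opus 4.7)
The plan is to compute the relative $\delta$-gradings of all generators of the complex $\cfahat(\mathcal{H})\boxtimes \cfdhat(X_{K,n})$, identify a set of survivors in homology that realizes the maximum and minimum $\delta_{\rm rel}$, and then take the difference. Two features make this tractable when $K$ is thin. First, $\cfdhat(X_{K,n})$ is particularly simple: since $\cfkm(K)$ has trivial vertical and horizontal arrows of length $\geq 2$, each chain $\kappa^i_\bullet$, $\lambda^i_\bullet$ consists of a single generator $\kappa^i_1$, $\lambda^i_1$, and all basis elements $\widetilde\xi_i, \widetilde\eta_i$ of $\cfkm(K)$ satisfy $M(\widetilde s)-A(\widetilde s)=-\tau(K)$ with $|A(\widetilde s)|\leq g(K)$. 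Second, all the $\delta_{\rm rel}$ formulas in Table~\ref{tab:tensor-gr} become affine expressions in $A(\widetilde s)$, $\tau(K)$, $n$, and $j$, so the supremum and infimum of $\delta_{\rm rel}$ over each row of the table are straightforward to read off.

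I would first compute, for each of the three ranges of $n$ relative to $2\tau(K)$, which generators of the tensor complex are closed and nonzero in homology. From the differential description in Section~\ref{ssec:tensor-d}, the generators that are not killed include in particular $y_6\boxtimes\kappa^i_1$, $x_6\boxtimes\kappa^i_1$, $x_0\boxtimes\eta_{2i-1}$, $y_2\boxtimes\eta_{2i-1}$, $x_4\boxtimes\xi_{2i}$, together with the unstable-chain generators $x_6\boxtimes\mu_j$ and $y_6\boxtimes\mu_j$ (whose ranks grow with $|n-2\tau(K)|$). Substituting $M(\widetilde s)=A(\widetilde s)-\tau(K)$ into Table~\ref{tab:tensor-gr} collapses each $\delta_{\rm rel}$ formula to something of the form $\pm A(\widetilde s)+\alpha n+\beta\tau(K)+\gamma$ or $\pm j+\alpha n+\beta\tau(K)+\gamma$. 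Then optimizing $A(\widetilde s)\in[-g,g]$ and $j$ over the allowed range will yield candidate extremes. Using the Alexander polynomial and $\tau$ bounds from thinness ($|\tau(K)|\leq g(K)$ with equality determined by whether extremal generators are $\xi$-type or $\eta$-type), one sees that when $n$ lies in $[-2g+1,2g-2]$ the extremes are attained by generators independent of the unstable chain (giving a constant $4g-2$), while outside this window the unstable chain contributes an additional linear term in $n$. Symmetry under $n\mapsto -n$ (more precisely, under replacing $K$ by its mirror) explains why the two outer cases give formulas differing only by the sign of $n$.

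The unknot and trefoil cases must be handled separately because thinness alone fails to pin down which tensor generators are nonzero: for $K=U$, many rows of Table~\ref{tab:tensor-gr} are empty since there are no $\kappa^i_1,\lambda^i_1$ generators at all, so the thickness is computed directly from the five surviving types (and most of these collapse), producing the linear formulas $-n-1$ and $n$. For the right-handed trefoil, one uses the explicit computation already carried out in Section~\ref{sec:gf} (cf.\ Table~\ref{tab:rht}) together with a completion of that table to the full list of surviving generators and their $\delta_{\rm rel}$ values; the slight discrepancy at $n=0,1$ reflects that the generators realizing the extremes of $A_{\rm rel}$ do not always realize the extremes of $\delta_{\rm rel}$.

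The main obstacle is the bookkeeping: one must verify not only that a given generator survives, but that it realizes the extremum, which requires ruling out larger or smaller $\delta_{\rm rel}$ values occurring elsewhere in Table~\ref{tab:tensor-gr}. A useful simplification is that generators in the same $\iota_0$ row (i.e.\ the same left factor $x_\ast$ or $y_\ast$) have $\delta_{\rm rel}$ values whose spread is controlled either by $\{A(\widetilde s):\widetilde s\in\widetilde{\boldsymbol\xi}\cup\widetilde{\boldsymbol\eta}\}$ or by the length of the unstable chain; a finite comparison between the extremes of each of these rows then determines the overall maximum and minimum. After this is done, the piecewise formula in the statement falls out from comparing the constant $4g-2$ (the internal spread coming from $A(\widetilde s)\in[-g,g]$) with the growing contribution of the unstable chain, with crossover at $n=-2g$ and $n=2g-1$.
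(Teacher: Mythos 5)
Your plan is sound and, at its core, is the same computation the paper carries out: read off $\delta_{\rm rel}$ from Table~\ref{tab:tensor-gr}, use the differential from Section~\ref{ssec:tensor-d} to identify homology survivors, exploit $M(\widetilde s)-A(\widetilde s)=-\tau(K)$ and the length-one chains coming from thinness, and compare the $n$-independent internal spread against the linearly growing unstable-chain contribution. The one organizational difference is that the paper does not optimize the affine formulas over $A(\widetilde s)\in[-g,g]$ directly; it first decomposes $\cfkm(K)$ into square summands and one staircase summand, computes the extremal $\delta_{\rm rel}$ contributed by a highest-centered square $\mathit{Sq}_A$ and by the staircase separately, and then compares the two. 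That decomposition is doing real work that your sketch leaves implicit: whether the candidate extremes $-\tau\pm(2g\mp 2)$ and $-\tau\pm 2g$ are actually realized depends on which Alexander degrees are populated by which generator types, and this splits into the cases $|\tau|<g$ (extremes come from squares centered at $A=g-1$), $|\tau|=g$ (extremes come from the staircase, with the exceptional footnoted case $\tau=1$, $n=1$ that produces the trefoil anomaly you noted), and $\tau=0$. You gesture at this with ``equality determined by whether extremal generators are $\xi$-type or $\eta$-type,'' but to close the argument you would need to make that case division explicit, which is essentially reconstructing the paper's Tables~\ref{tab:alt-min-max1}--\ref{tab:alt-min-max-3}.

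One remark in your proposal is actually false and should not be relied upon: the two outer formulas are $2g(K)-n-1$ and $2g(K)+n$, which do \emph{not} differ only by the sign of $n$ --- they are offset by $1$. So there is no clean symmetry under $n\mapsto -n$ (with or without mirroring $K$), reflecting the chirality of the Mazur pattern and the asymmetry of the unstable chain ($D_1,D_{23},\dots,D_3$ versus $D_{123},D_{23},\dots,D_2$). Both outer cases must be computed independently, as the paper does; deriving one from the other by symmetry would give the wrong answer.
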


\begin{proof}
Throughout, we denote $\tau(K)$ by $\tau$ and $g(K)$ by $g$. Recall from \cite[Lemma 7]{cables} that for a thin knot $K$, the complex $\cfkm(K)$, and hence the module $\cfdhat(X_{K,n})$, is particularly simple. More precisely, there exists a basis $\widetilde{\boldsymbol{\eta}} = \{\widetilde\eta_0, \ldots, \widetilde\eta_{2k}\}$ for $\cfkm(K)$ which is both horizontally and vertically simplified. With respect to that basis, the complex $\cfkm(K)$ decomposes as a direct sum of "squares" and one "staircase" with length-one steps, as in Figure~\ref{fig:thin}. If $\cfkm(K)$ contains squares, then the number of squares with top right corner in any given Alexander degree $a$ is the same as the number of squares with top right corner in Alexander degree $-a$.

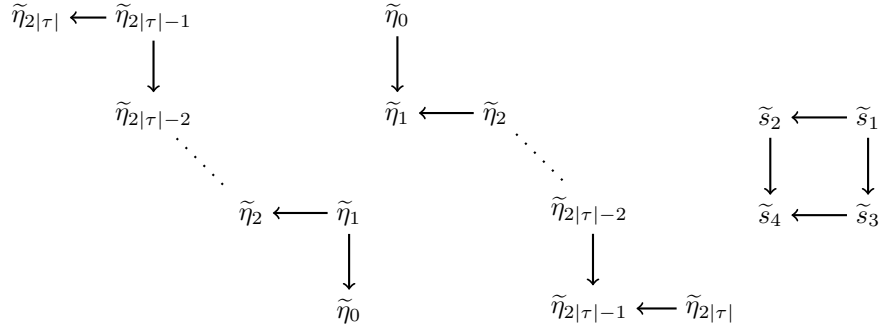
\begin{figure}[h]
    \centering
    {\scalefont{0.8}
    \begin{tikzpicture}[xscale=0.65,yscale=0.65]
      \node at (-0.4,-4) (x1) {$\widetilde{\eta}_{2|\tau|}$};
      \node at (2,-4) (x2) {$\widetilde{\eta}_{2|\tau|-1}$};
      \node at (2,-6) (x3) {$\widetilde{\eta}_{2|\tau|-2}$};
      \node at (4,-8) (x4) {$\widetilde{\eta}_{2}$};
      \node at (6,-8) (x5) {$\widetilde{\eta}_{1}$};
      \node at (6,-10) (x6) {$\widetilde{\eta}_0$}; 
      \draw[algarrow] (x2) to (x1);
      \draw[algarrow] (x2) to  (x3);
      \draw[algarrow] (x5) to  (x4);
      \draw[algarrow] (x5) to (x6);
      \draw[loosely dotted, thick] (x3) to node[right] {} (x4);
    \end{tikzpicture}
    \begin{tikzpicture}[xscale=0.65,yscale=0.65]
      \node at (0,-4) (x1) {$\widetilde{\eta}_{0}$};
      \node at (0,-6) (x2) {$\widetilde{\eta}_{1}$};
      \node at (2,-6) (x3) {$\widetilde{\eta}_{2}$};
      \node at (4,-8) (x4) {\hspace{-.1cm}$\widetilde{\eta}_{2|\tau|-2}$};
      \node at (4,-10) (x5) {\hspace{-.1cm}$\widetilde{\eta}_{2|\tau|-1}$};
      \node at (6.4,-10) (x6) {$\widetilde{\eta}_{2|\tau|}$}; 
      \draw[algarrow] (x1) to  (x2);
      \draw[algarrow] (x3) to  (x2);
      \draw[algarrow] (x4) to (x5);
      \draw[algarrow] (x6) to  (x5);
      \draw[loosely dotted, thick] (x3) to node[right] {} (x4);
    \end{tikzpicture}	
        \begin{tikzpicture}[xscale=0.65,yscale=0.65]
      \node at (0,-2) (x1) {$\widetilde{s}_{2}$};
      \node at (0,-4) (x2) {$\widetilde{s}_{4}$};
      \node at (2,-4) (x3) {$\widetilde{s}_{3}$};
      \node at (2,-2) (x4) {$\widetilde{s}_{1}$};
      \node at (0, -6.2) (x5) {};
      \draw[algarrow] (x1) to  (x2);
      \draw[algarrow] (x3) to  (x2);
      \draw[algarrow] (x4) to (x1);
      \draw[algarrow] (x4) to  (x3);
    \end{tikzpicture}	
    }	
    \caption{The three types of summands of $\cfkm(K)$ for a thin knot $K$.}
    \label{fig:thin}
  \end{figure}

  \begin{figure}[h]
    \centering
    {\scalefont{0.8}
    \begin{tikzpicture}[xscale=0.58,yscale=0.58]
      \node at (0,0) (x0) {$s_2$};
      \node at (0,-4) (x2) {$s_4$};
      \node at (4,-4) (x1) {$s_3$};
      \node at (0,-2) (k) {$\kappa'$};
      \node at (2,-4) (l) {$\lambda'$}; 
      \node at (4,0) (x3) {$s_1$};
      \node at (2,0) (l1) {$\lambda$};
      \node at (4,-2) (k1) {$\kappa$};
      \draw[algarrow] (x0) to node[left] {$D_1$} (k);
      \draw[algarrow] (x2) to node[left] {$D_{123}$} (k);
      \draw[algarrow] (x1) to node[below] {$D_3$} (l);
      \draw[algarrow] (l) to node[below] {$D_2$} (x2);
      \draw[algarrow] (x3) to node[right] {$D_1$} (k1);
      \draw[algarrow] (x1) to node[right] {$D_{123}$} (k1);
      \draw[algarrow] (x3) to node[above] {$D_3$} (l1);
      \draw[algarrow] (l1) to node[above] {$D_2$} (x0);
    \end{tikzpicture}	
    }	
    \caption{A summand $\mathit{Sq}_a$ of $\cfdhat(X_{K, n})$ corresponding to a square summand of $\cfkm(K)$ with top right corner in Alexander degree $a$.}
    \label{fig:sq}
  \end{figure}

Let $\mathit{Sq}_a$ be a square summand of $\cfdhat(X_{K, n})$ with generators labeled as in Figure \ref{fig:sq} and $A(\widetilde s_1) = a$. We will say that $\mathit{Sq}_a$ is \emph{centered at $a$}. Using the differential computation from Section~\ref{ssec:tensor-d}, we see that the  nontrivial differentials on $\cfahat(\mathcal H)\boxtimes \mathit{Sq}_a$ are given by 
\begin{align*}
\dbox (x_1\boxtimes \lambda) &= x_0\boxtimes s_2 &  \dbox (x_1\boxtimes \lambda') &= x_0\boxtimes s_4 &\\
\dbox (x_3\boxtimes \lambda) &= y_2\boxtimes s_2 & \dbox (x_3\boxtimes \lambda') &= y_2\boxtimes s_4 &\\
\dbox (x_2\boxtimes s_1) &= x_1\boxtimes \kappa & \dbox (x_2\boxtimes s_2) &= x_1\boxtimes \kappa' &\\
\dbox (x_4\boxtimes s_1) &= x_3\boxtimes \kappa & \dbox (x_4\boxtimes s_2) &= x_3\boxtimes \kappa' & \\
\dbox (y_4\boxtimes s_1) &= y_3\boxtimes \kappa & \dbox (y_4\boxtimes s_2) &= y_3\boxtimes \kappa' &\\
\dbox (y_3\boxtimes \lambda) &= y_1\boxtimes \kappa'. &  &
\end{align*}

 Using the values from Table~\ref{tab:tensor-gr}, we compute the relative $\delta$-gradings of the generators of $H_{\ast}(\cfahat(\mathcal H)\boxtimes \mathit{Sq}_a)$ in terms of $\tau$, $n$, and $a$. For example, the second row of Table~\ref{tab:tensor-gr} gives 
 \[\delta_{\textrm{rel}}(x_2\boxtimes s) = M(\tilde s) - A(\tilde s) + n+ 1,\]
 so we get
  \[\delta_{\textrm{rel}}(x_2\boxtimes s_3) = M(\widetilde s_3) - A(\widetilde s_3) + n+ 1 = (M(\widetilde s_1)-1) - (A(\widetilde s_1)-1) + n+ 1.\]
  Since $A(\widetilde s_1) = a$ and  $M(\widetilde s_1) = a - \tau$, this simplifies to 
    \[\delta_{\textrm{rel}}(x_2\boxtimes s_3) = - \tau + n+ 1.\]
    We list all generators of $H_{\ast}(\cfahat(\mathcal H)\boxtimes \mathit{Sq}_a)$ and their relative $\delta$ degrees in Table~\ref{tab:sq-deg}.
\begin{table}[h]
\begin{center}
  \begin{tabular}{ | c | c || c | c || c | c | }
    \hline
    Generator & $\delta_\textrm{rel}$ & Generator & $\delta_\textrm{rel}$  & Generator & $\delta_\textrm{rel}$\\ 
    \hline
    \hline
    $x_2\boxtimes s_3$  & $-\tau+n+1$ & $x_5\boxtimes \kappa$  & $- \tau + n+ 1$  & $x_5\boxtimes \lambda$  & $- \tau - 2a$ \\ \hline
    $x_2\boxtimes s_4$  & $-\tau+n+1$ &  $y_5\boxtimes \kappa$ & $- \tau + n+ 1$ &  $y_5\boxtimes \lambda$  & $-\tau - 2a$  \\ \hline
    $x_0\boxtimes s_1$  & $-\tau-2a$ & $x_6\boxtimes \kappa$  & $-\tau + 2a$ &  $x_6\boxtimes \lambda$  & $-\tau + n+ 1$ \\ \hline
    $x_0\boxtimes s_3$  & $-\tau-2a+2$ & $y_6\boxtimes \kappa$  & $-\tau + 2a$ &  $y_6\boxtimes \lambda$   &  $- \tau + n+ 1$ \\ \hline
    $x_4\boxtimes s_3$  & $-\tau+ 2a$ & $x_5\boxtimes \kappa'$  & $-\tau + n+ 1$  & $x_5 \boxtimes \lambda'$ & $-\tau-2a+2$ \\ \hline
    $x_4\boxtimes s_4$  & $-\tau+2a+2$ & $y_5\boxtimes \kappa'$  & $-\tau + n+ 1$  & $y_5 \boxtimes \lambda'$  & $-\tau-2a+2$ \\ \hline
    $y_2\boxtimes s_1$  & $-\tau + n+1$ & $x_6\boxtimes \kappa'$  & $-\tau +2a+2$ & $x_6 \boxtimes \lambda'$  & $- \tau + n+ 1$ \\ \hline
    $y_2\boxtimes s_3$  & $-\tau + n+1$ & $y_6\boxtimes \kappa'$  & $-\tau +2a+2$ &  $y_6 \boxtimes \lambda'$& $- \tau + n+ 1$ \\ \hline
    $y_4\boxtimes s_3$  & $-\tau +2a$ & $y_1\boxtimes\kappa$ &  $- \tau + n+ 1$  & $y_1\boxtimes\lambda$ &  $-\tau-2a$   \\ \hline
    $y_4\boxtimes s_4$  & $-\tau +2a+2$ &  &  & $y_1\boxtimes\lambda'$ & $-\tau-2a+2$  \\ \hline
      &  &  &  & $y_3\boxtimes\lambda'$ & $-\tau + n +2$ \\ \hline
  \end{tabular}
\end{center}
 \caption{The generators of $H_{\ast}(\cfahat(\mathcal H)\boxtimes \mathit{Sq}_a)$ and their relative $\delta$ degrees. Here $\mathit{Sq}_a$  corresponds to a square summand of $\cfkm(K)$ with top-right corner in Alexander degree $a$, and $K$ is a thin knot with $\tau(K) =\tau$.}
  \label{tab:sq-deg}
\end{table}

By symmetry, for every square $\mathit{Sq}$ centered at $a$, there is a square  $\mathit{Sq'}$  centered at $-a$. Table~\ref{tab:sq-deg} shows that $\mathit{Sq}$ is supported in the following six or fewer relative $\delta$ degrees: $- \tau + n+ 1$, $- \tau + n+ 2$, $-\tau-2a$, $-\tau-2a+2$, $-\tau+2a$, and $-\tau+2a+2$; changing $a$ to $-a$, we see that $\mathit{Sq'}$ is supported in the same relative $\delta$ degrees as $\mathit{Sq}$. Hence, to analyze the thickness of $Q_n(K)$, it is enough to consider the squares of $\cfkm(K)$ centered at nonnegative Alexander degrees $a$. 
Now, for $a\geq 0$, we have
\begin{align*}
-\tau-2a &< -\tau-2a+2\leq -\tau+2a+2,\\
-\tau-2a &\leq -\tau+2a < -\tau+2a+2,
\end{align*}
so the minimum relative $\delta$ degree is in the set $\{- \tau + n+ 1, - \tau -2a\}$, and the maximum is in the set $\{- \tau + n+ 2, - \tau +2a+2\}$. Further, if $0\leq a' \leq a$, we have 
\begin{align*}
-\tau-2a &\leq -\tau-2a',\\
-\tau+2a'+2 &\leq -\tau+2a+2,
\end{align*}
so the relative $\delta$ degrees resulting from a square centered at $a'$ are bounded by the minimum and maximum relative $\delta$ degrees resulting from a square centered at $a$. Thus, to analyze the thickness of $Q_n(K)$, it is in fact enough to only consider a highest-centered square of $\cfkm(K)$. Let $A$ be the highest Alexander degree at which a square is centered for our fixed thin knot $K$. Table~\ref{tab:sq-min-max} summarizes the minimum and maximum relative $\delta$ degrees following from the above discussion, depending on the framing $n$ relative to $A$. 
\begin{table}[h]
\begin{center}
  \begin{tabular}{ | c || c | c | c | c | c | }
    \hline
     & $n\leq -2A-2$ & $n = -2A-1$ & $n\in [-2A, 2A-1]$  & $n=2A$ & $n\geq 2A+1$\\ 
    \hline
    \hline
   Min $\delta_\textrm{rel}$  & $-\tau+n+1$ & \makecell{$-\tau+n+1$\\$ = -\tau - 2A$} & $-\tau -2A$  & $- \tau -2A$  & $- \tau - 2A$ \\ \hline
 Max $\delta_\textrm{rel}$   & $-\tau+2A+2$ &  $-\tau+2A+2$ & $-\tau+2A+2$ &  \makecell{$-\tau+2A+2$ \\ $=-\tau+ n+ 2$}  & $-\tau + n+ 2$  \\ \hline
  \end{tabular}
\end{center}
 \caption{Minimum and maximum relative $\delta$ degrees for the generators of $H_{\ast}(\cfahat(\mathcal H)\boxtimes \cfdhat(X_{K, n})) \cong \hfkhat(Q_n(K))$ coming from squares of $\cfkm(K)$ when $K$ is a thin knot with $\tau(K) = \tau$, and $A$ is the highest Alexander degree at which there is a square centered. }
  \label{tab:sq-min-max}
\end{table}

Minimum and maximum relative $\delta$ degrees for the generators of $H_{\ast}(\cfahat(\mathcal H)\boxtimes \cfdhat(X_{K, n})) \cong \hfkhat(Q_n(K))$ coming from the staircase summand of $\cfkm(K)$ are obtained  like in Table \ref{tab:sq-deg}: we first calculate the generators of $\cfahat(\mathcal H)\boxtimes \cfdhat(X_{K, n})$ that survive in homology using the differential formulas from Section~\ref{ssec:tensor-d}, then we compute their relative $\delta$ degrees using the formulas in Table~\ref{tab:tensor-gr}.

For example, consider the case when $\tau>0$ and $n <2\tau$.  Then the staircase summand of $\cfkm(K)$ is given by the left  diagram in Figure~\ref{fig:thin}. We denote the generators of the type $D$ structure $\cfdhat(X_{K, n})$ associated to the staircase summand of $\cfkm(K)$ as follows.  Notation for generators of $\cfdhat(X_{K, n})$ in idempotent $\iota_0$ agrees with Figure~\ref{fig:thin}, i.e.\ we use $\eta_i$ for the generator corresponding to $\widetilde \eta_i$.  Note that the generator $\eta_{2\tau}$ in $\cfdhat(X_{K, n})$ corresponds to the generator $\xi_0$ in Section \ref{sec:prelim}. The generators in idempotent $\iota_1$ coming from horizontal (resp.\ vertical) arrows are labeled $\lambda^1_1, \ldots, \lambda_1^{\tau}$ (resp.\ $\kappa_1^1, \ldots, \kappa_1^{\tau}$) in the order they appear when traversing the staircase starting at $\eta_0$. The generators  in idempotent $\iota_1$ along the unstable chain are denoted as usual.  With this labeling, we have $A(\widetilde\eta_i) = -\tau + i$  and $M(\widetilde \eta_i) = -2\tau + i$. Using these values to simplify the formulas from Table~\ref{tab:tensor-gr}, and computing the differential of $\cfahat(\mathcal H)\boxtimes \cfdhat(X_{K, n})$, we obtain a list of generators of $H_{\ast}(\cfahat(\mathcal H)\boxtimes \cfdhat(X_{K, n})) \cong \hfkhat(Q_n(K))$ coming from the staircase summand of $\cfkm(K)$ and their relative $\delta$ degrees, see Table~\ref{tab:st-deg}. When $n\leq -2\tau$, the lowest relative $\delta$ degree is $-\tau+n+1$ and one example of a homology generator in this degree is $x_5\boxtimes\mu_1$, whereas the highest relative $\delta$ degree is $\tau$ and one example of a homology generator in this degree is $x_0\boxtimes\eta_0$. When $-2\tau + 1\leq n\leq 2\tau-2$,  the lowest relative $\delta$ degree is $-3\tau+2$ and one example of a homology generator in this degree is $x_4\boxtimes\eta_0$, whereas the highest relative $\delta$ degree is $\tau$ and one example of a homology generator in this degree is $x_0\boxtimes\eta_0$. Finally, assume $n =2\tau -1$. Then the lowest relative $\delta$ degree is $-3 \tau +2$ achieved at generator $x_4\boxtimes\eta_0$, and the highest relative $\delta$ degree is 1 achieved at $x_0 \boxtimes \eta_0$ when $\tau =1$, and $\tau +1$ achieved at $y_3 \boxtimes \lambda^1_1$ when $\tau \geq 2$.

\begin{table}[h]
\begin{center}
  \begin{tabular}{ | c | c || c | c   | }
    \hline
    Generator & $\delta_\textrm{rel}$ & Generator & $\delta_\textrm{rel}$  \\ 
    \hline
    \hline
     $x_2 \boxtimes \eta_{2i}, i \in \{0, \ldots, \tau-1\}$   & $-\tau+n+1$ & $y_1\boxtimes \lambda_1^i, i \in \{1, \ldots, \tau\}$  & $\tau-4i+2$   \\ \hline
    $x_4 \boxtimes \eta_{2i}, i \in \{0, \ldots, \tau-1\}$  & $-3\tau+4i+2$ &  $y_1\boxtimes \kappa_1^i, i \in \{1, \ldots, \tau\}$ & $- \tau + n+ 1$     \\ \hline
    $y_4 \boxtimes \eta_{2i}, i \in \{0, \ldots, \tau-1\}$  & $-3\tau+4i+2$ & $y_3\boxtimes \lambda_1^i, i \in \{1, \ldots, \tau -1 \}$    & $-\tau + n + 2$  \\ \hline
    $x_0 \boxtimes \eta_{2i-1}, i \in \{1, \ldots, \tau\}$   & $\tau-4i+2$ & $x_5\boxtimes \lambda_1^i, i \in \{1, \ldots, \tau\}$   & $\tau-4i+2$  \\ \hline
   $x_0 \boxtimes \eta_{0}$   & $\tau$ & $y_5\boxtimes \lambda_1^i, i \in \{1, \ldots, \tau\}$   & $\tau-4i+2$  \\ \hline
    $y_2 \boxtimes \eta_{2i-1}, i \in \{1, \ldots, \tau\}$   & $-\tau + n+1$   & $x_5\boxtimes \kappa_1^i, i \in \{1, \ldots, \tau\}$ & $- \tau + n+ 1$  \\ \hline
     $y_2 \boxtimes \eta_{0}$   & $-\tau + n+ 1$ & $y_5\boxtimes \kappa_1^i, i \in \{1, \ldots, \tau\}$ & $- \tau + n+ 1$   \\ \hline  
          $x_6\boxtimes \lambda_1^i, i \in \{1, \ldots, \tau\}$  & $- \tau  + n + 1$ & $x_6\boxtimes \kappa_1^i, i \in \{1, \ldots, \tau\}$ & $-3\tau + 4i-2$   \\ \hline  
               $y_6\boxtimes \lambda_1^i, i \in \{1, \ldots, \tau\}$  & $- \tau  + n+ 1$ & $y_6\boxtimes \kappa_1^i, i \in \{1, \ldots, \tau\}$ & $-3\tau + 4i-2$   \\ \hline  
     $x_1\boxtimes \mu_j, j \in \{2, \ldots, 2\tau-n\}$  & $- \tau  +j+ n- 1$ & $y_1\boxtimes \mu_j, j \in \{2, \ldots, 2\tau-n\}$ & $- \tau +j+n$   \\ \hline 
          $x_3\boxtimes \mu_j, j \in \{2, \ldots, 2\tau-n\}$  & $\tau  -j+ 2$ & $y_3\boxtimes \mu_j, j \in \{2, \ldots, 2\tau-n\}$ & $\tau -j+2$   \\ \hline 
       $x_5\boxtimes \mu_j, j \in \{1, \ldots, 2\tau-n\}$  & $-\tau  +j+ n$ & $y_5\boxtimes \mu_j, j \in \{1, \ldots, 2\tau-n\}$ & $-\tau + j+n$   \\ \hline 
        $x_6\boxtimes \mu_j, j \in \{1, \ldots, 2\tau-n\}$  & $\tau  -j+ 1$ & $y_6\boxtimes \mu_j, j \in \{1, \ldots, 2\tau-n\}$ & $\tau -j+1$   \\ \hline 
  \end{tabular}
\end{center}
 \caption{The generators of $H_{\ast}(\cfahat(\mathcal H)\boxtimes \cfdhat(X_{K, n})) \cong \hfkhat(Q_n(K))$ coming from the staircase summand of $\cfkm(K)$ and their relative $\delta$ degrees, when $\tau>0$ and $n < 2\tau$.} 
  \label{tab:st-deg}
\end{table}
The other cases can be worked out similarly. We summarize the results in three tables below, also listing the minimum and maximum relative $\delta$ degrees coming from squares in each case. Note that the $n < 2\tau$ case discussed above falls into the first  four columns of Table~\ref{tab:alt-min-max1} and  all columns of Table~\ref{tab:alt-min-max2}, when $\tau$ is positive.

When $0< |\tau|<g$, the highest Alexander degree for a staircase generator of $\cfkm(K)$ is $|\tau|$, and the highest overall Alexander degree of a generator of $\cfkm(K)$ is $g$. Hence, the highest Alexander degree is attained by a square generator, so there is at least one square, and $A=g-1$. The highest and lowest relative $\delta$ degrees of $\hfkhat(Q_n(K))$ arising from tensoring $\cfahat(\mathcal H)$ with squares and with the staircase are summarized in Table~\ref{tab:alt-min-max1}. For all values of $n$, the staircase degrees are bounded by the highest-square degrees, so the thickness of $Q_n(K)$ is the difference between the extremal square degrees; see the last row of Table~\ref{tab:alt-min-max1} for $\mathrm{th}(Q_n(K))$.  
\begin{savenotes}
\begin{table}[h]
\begin{center}
\resizebox{\textwidth}{!}{
  \begin{tabular}{ | c || c | c | c | c | c | }
    \hline
  $n\in$   & $\left(-\infty, -2g\right]$ & $\left[-2g+1, -2|\tau|\right]$ & $\left[-2|\tau|+1, 2|\tau|-2\right]$  & $ \left[2|\tau|-1, 2g-2\right]$ & $\left[2g-1, \infty\right)$\\ 
    \hline
    \hline
\makecell{Min $\delta_\textrm{rel}$ from\\ squares}& $-\tau+n+1$ & $-\tau-2g+2$ & $-\tau -2g+2$  & $- \tau -2g+2$  & $- \tau - 2g+2$ \\ \hline
\makecell{Min $\delta_\textrm{rel}$  from\\ staircase}& $-\tau+n+1$ &  $-\tau+n+1$ & $-\tau -2|\tau|+2$  & $-\tau -2|\tau|+2$  & $-\tau -2|\tau|+2$ \\ \hline
\hline
\makecell{Max $\delta_\textrm{rel}$ from\\ squares}  & $-\tau+2g$ &  $-\tau+2g$ & $-\tau+2g$ &  $-\tau+2g$ & $-\tau + n+ 2$  \\ \hline
\makecell{Max $\delta_\textrm{rel}$ from \\staircase}  & $-\tau +2|\tau|$ &  $-\tau +2|\tau|$ & $-\tau +2|\tau|$ & $-\tau + n+ 2$\footnote{Except when $\tau=1$ and $n = 1$ the maximum $\delta_{\mathrm{rel}}$ degree coming from the staircase is $1$, not $2$.}   & $-\tau + n+ 2$  \\ \hline
\hline
$\mathrm{th}(Q_n(K))$   & $2g-n-1$ & $4g-2$ & $4g-2$  & $4g-2$ & $2g+n$\\ 
\hline
  \end{tabular}
  }
\end{center}
 \caption{Minimum and maximum relative $\delta$ degrees for the  generators of $H_{\ast}(\cfahat(\mathcal H)\boxtimes \cfdhat(X_{K, n})) \cong \hfkhat(Q_n(K))$ when $K$ is thin with $\tau(K) = \tau\neq 0$ and $g(K)=g>|\tau|$, along with the resulting thickness $\mathrm{th}(Q_n(K))$.}
  \label{tab:alt-min-max1}
\end{table}
\end{savenotes}

When $0< |\tau|=g$, there may or may not be squares in $\cfkm(K)$. If there are squares, the highest one is centered at some $A\in \{0, \ldots,  g-1\}$.
The highest and lowest relative $\delta$ degrees of $\hfkhat(Q_n(K))$ arising from tensoring $\cfahat(\mathcal H)$ with squares and with the staircase are summarized in Table~\ref{tab:alt-min-max2}. For all values of $n$, except when $\tau=1$ and $n=1$,  the square degrees are bounded by the extremal staircase degrees, so the thickness of $Q_n(K)$ is the difference between the extremal staircase degrees. When  $\tau=1$ and $n=1$,  the maximum relative $\delta$ degree coming from the staircase is $1$, and the maximum relative $\delta$ degree coming from squares, if there are any, is $2$. The resulting thickness $\mathrm{th}(Q_n(K))$ is then $2$ if there are no squares, or $3$ if there are squares. See the last row of Table~\ref{tab:alt-min-max2} for $\mathrm{th}(Q_n(K))$.
\begin{savenotes}
\begin{table}[h]
\begin{center}
\resizebox{\textwidth}{!}{
  \begin{tabular}{ | c || c | c | c | c | c | }
    \hline
  $n\in$   & $\left(-\infty, -2g\right]$ & $\left[-2g+1, -2A-1\right]$ & $\left[-2A, 2A-1\right]$  & $ \left[2A, 2g-2\right]$ & $\left[2g-1, \infty\right)$\\ 
    \hline
    \hline
\makecell{Min $\delta_\textrm{rel}$  from\\ squares}& $-\tau+n+1$ & $-\tau+n+1$ & $-\tau -2A$  & $- \tau -2A$  & $- \tau - 2A$ \\ \hline
\makecell{Min $\delta_\textrm{rel}$  from\\ staircase}& $-\tau+n+1$ &  $-\tau -2g+2$ & $-\tau -2g+2$  & $-\tau -2g+2$  & $-\tau -2g+2$ \\ \hline
\hline
\makecell{Max $\delta_\textrm{rel}$ from \\ squares}  & $-\tau+2A+2$ &  $-\tau+2A+2$ & $-\tau+2A+2$ &  $-\tau+n+2$ & $-\tau + n+ 2$  \\ \hline
\makecell{Max $\delta_\textrm{rel}$ from \\ staircase}  & $-\tau +2g$ &  $-\tau +2g$ & $-\tau +2g$ & $-\tau +2g$   & $-\tau + n+ 2$ \footnote{Except when $\tau=1$ and $n = 1$ the maximum $\delta_{\mathrm{rel}}$ degree coming from the staircase is $1$, not $2$.} \\ \hline
\hline
$\mathrm{th}(Q_n(K))$   & $2g-n-1$ & $4g-2$ & $4g-2$  & $4g-2$ & $2g+n$ \footnote{Except when $\tau=1$, $n = 1$ and there are no squares, and hence by \cite{hw} $K$ is the right-handed trefoil, the value of $\mathrm{th}(Q_n(K))$ is $2$, not $3$.}\\ 
\hline
  \end{tabular}
  }
\end{center}
 \caption{Minimum and maximum relative $\delta$ degrees for the generators of $H_{\ast}(\cfahat(\mathcal H)\boxtimes \cfdhat(X_{K, n})) \cong \hfkhat(Q_n(K))$ when $K$ is thin with $\tau(K) = \tau\neq 0$ and $g(K)=g=|\tau|$, along with the resulting thickness $\mathrm{th}(Q_n(K))$.}
  \label{tab:alt-min-max2}
\end{table}
\end{savenotes}

Last, we consider the case $\tau = 0$. The staircase of $\cfkm(K)$ consists of just one element $\widetilde\eta_0$, and the corresponding summand of $\cfdhat(X_{K,n})$ consists only of the unstable chain, which starts and ends at the corresponding element $\eta_0$. Analysis analogous to the above yields the extremal $\delta_{\mathrm{rel}}$ degrees  for the staircase summand of $\cfdhat(X_{K,n})$ listed in Table~\ref{tab:alt-min-max-3}. To compute the thickness of $Q_n(K)$, we need to also consider squares. 

If $g\geq2$ and $\tau=0$, then there are squares, and the highest one is centered at $A = g-1$. For all values of $n$, the staircase degrees are bounded by the highest-square degrees, so the thickness of $Q_n(K)$ is the difference between the extremal square degrees. See Table~\ref{tab:alt-min-max-3}.

\begin{table}[h]
\begin{center}
  \begin{tabular}{ | c || c | c | c | c |}
    \hline
$n\in$  & $\left(-\infty, -2g\right]$ & $\left[-2g+1, -1\right]$ & $\left[0, 2g-2\right]$ & $\left[ 2g-1, \infty\right)$\\ 
    \hline
    \hline
\makecell{Min $\delta_\textrm{rel}$  from\\ squares}& $n+1$ & $-2g+2$ & $-2g+2$ & $-2g+2$ \\ \hline
\makecell{Min $\delta_\textrm{rel}$  from\\ staircase}& $n+1$ &  $n+1$ & $2$ & $2$ \\ \hline
\hline
\makecell{Max $\delta_\textrm{rel}$ from \\ squares}  & $2g$ &  $2g$ & $2g$ & $n+2$ \\ \hline
\makecell{Max $\delta_\textrm{rel}$ from \\ staircase}  & $0$ &  $0$ & $n+ 2$ & $n+ 2$ \\ \hline
\hline
$\mathrm{th}(Q_n(K))$   & $2g-n-1$ & $4g-2$ & $4g-2$  & $2g+n$  \\ 
\hline
  \end{tabular}
\end{center}
 \caption{Minimum and maximum relative $\delta$ degrees for the generators of $H_{\ast}(\cfahat(\mathcal H)\boxtimes \cfdhat(X_{K, n})) \cong \hfkhat(Q_n(K))$ when $K$ is thin with $\tau(K)=0$ and $g(K)=g\geq 2$, along with the resulting thickness $\mathrm{th}(Q_n(K))$.}
  \label{tab:alt-min-max-3}
\end{table}

If $g=1$ and $\tau=0$, then there are squares, all centered at $A=0$. Combining the staircase $\delta_{\mathrm{rel}}$ values from Table~\ref{tab:alt-min-max-3} (which do not depend on any assumptions for $g(K)$) with the square values from Table~\ref{tab:sq-min-max}, we see that the staircase degrees are bounded by the extremal square degrees.  Then thickness of $Q_n(K)$ is the difference between the extremal square degrees. When $n\leq -1$, $\mathrm{th}(Q_n(K)) = 1-n$; when $n\geq 0$, $\mathrm{th}(Q_n(K)) = n+2$.

If $g=0$, then $K$ is the unknot. The staircase values from Table~\ref{tab:alt-min-max-3} are all we need to compute $\mathrm{th}(Q_n(K))$.  When $n\leq -2$, $\mathrm{th}(Q_n(K)) = -n-1$; when $n\in\{-1,0\}$, $\mathrm{th}(Q_n(K)) = 0$; when $n\geq 1$, $\mathrm{th}(Q_n(K)) = n$.    

This completes the proof of Proposition~\ref{thm:alt-th}.
\end{proof}

\subsubsection{The cosmetic surgery conjecture for $Q_n(K)$ when $K$ is thin}\label{sssec:alt-csc}

We are now ready to prove Theorem~\ref{thm:csc} in the case of thin companions. Throughout, $g=g(K)$.
\begin{proof}[Proof of Theorem~\ref{thm:csc} for thin companions]

Apart from a few special cases, we will use Inequality~\ref{eqn:csc2}. 

Let $S =  \{(0,-2), (0,-1), (0,0), (0,1), (1,-1), (1,0), (2,-1), (2,0)\}$.
 \begin{casesp}
 \item Assume $(g,n)\notin S$. Theorem~\ref{thm:g3} implies that $\gq\geq 3$. We will combine the thickness values from Proposition~\ref{thm:alt-th} with the genus values from Theorem~\ref{thm:g3} to show that Inequality~\ref{eqn:csc2} holds for all pairs $(g,n)\notin S$.
 
 \begin{casesp}
 
\item Suppose $g=0$, i.e.\ $K$ is the unknot.
\begin{itemize}
\item[-] If $n\leq -3$, then  $\thiq = -n-1$,  $\gq = -n$, and  $f(Q_n(K)) = 2(-n)^2-4(-n)-(-n-1)>0$. 
\item[-] If $n\geq 2$, then $\thiq = n$,  $\gq = n+1$, and  $f(Q_n(K)) = 2(n+1)^2-4(n+1)-n>0$. 
\end{itemize}

 \item \label{case:12} Suppose  $g\geq 1$.
 \begin{itemize}
 \item[-] If $n\leq -2g$, then $\thiq = 2g-n-1$ and $\gq = g-n$.  One can verify (by hand, or plugging into a calculator) that $f(Q_n(K)) = 2(g-n)^2-4(g-n)-(2g-n-1)$ is always positive on the domain $\{(g,n)\in \Z \times \Z \mid g\geq 1, n\leq -2g, (g,n)\notin S \}$. 
 \item[-] If $n\in\left[-2g+1, -1\right]$, then $\thiq = 4g-2$ and $\gq = g-n$. Again one sees that $f(Q_n(K)) = 2(g-n)^2-4(g-n)-(4g-2)$ is always positive.
\item[-] Suppose  $n\in\left[0,2g-2\right]$. 
Then $\thiq = 4g-2$ and $\gq = g+n+1$, so  $f(Q_n(K)) = 2(g+n+1)^2-4(g+n+1)-(4g-2)>0$.
\item[-] Suppose $n\geq 2g-1$. If $K$ is the right handed trefoil, we have $\thiq = 2$ and $\gq =3$, so $f(Q_n(K)) = 4>0$. Otherwise,  $\thiq = 2g+n$ and $\gq =g+n+1$, so $f(Q_n(K)) = 2(g+n+1)^2-4(g+n+1)-(2g+n)>0$. 
 \end{itemize}

Thus, the satellite $Q_n(K)$ satisfies the cosmetic surgery conjecture whenever $K$ is thin and $(g,n)\notin S$.
\end{casesp}

 \item Assume $(g,n)\in S$. We cannot use Inequality~\ref{eqn:csc2} here, so we use \cite[Proposition 5.1]{boyerlines}, \cite[Theorem 1.2]{niwu}, and \cite[Theorem 2]{hanselman}.

\begin{casesp} 
 \item Suppose $g=0$.
 \begin{itemize}
 \item[-] If $(g,n)=(0,-2)$, then $Q_{-2}(K) = 12n121$, so  $\Delta_{12n121}''(1) = 8\neq 0$.
\item[-] If $(g,n) =(0,-1)$, then $Q_{-1}(K)=5_2$, so  $\Delta_{Q_{-1}(K)}''(1) = 4\neq 0$.
\item[-] If $(g,n)=(0,0)$, then $Q_{0}(K)$ is the trivial knot. 
\item[-] If $(g,n) =(0,1)$, then $Q_1(K)= 9_{42}$, so $\Delta_{Q_{1}(K)}''(1) = -4\neq 0$. 
\end{itemize}
Thus, all three nontrivial satellites above satisfy the cosmetic surgery conjecture. 

 \item Suppose $g\geq 1$.
\begin{itemize}
\item[-] \label{deriv} Suppose $(g,n) = (1,-1)$. If $|\tau(K)|=1$, the complex $\cfkm(K)$ consists of one $3$-element staircase and possibly some squares centered at $0$. Thus, 
\[\Delta_K(t) = (s+1)t - (2s+1) + (s+1)t^{-1},\]
where $s\geq 0$ is the number of squares.
Since the $n$-twisted Mazur pattern has winding number $1$ in the solid torus, we have
\[\Delta_{Q_n(K)} (t)= \Delta_{Q_n(U)}(t)\Delta_K(t).\]
Specifically, for $n=-1$ we have $Q_{-1}(U)= 5_2$, so
\[\Delta_{Q_{-1}(K)} (t)= \Delta_{5_2}(t)\Delta_K(t) = (2t-3+2t^{-1})((s+1)t - (2s+1) + (s+1)t^{-1}).\]
So  $\Delta_{Q_{-1}(K)}''(1) = 2s+6$, which is nonzero for any nonnegative $s$. By \cite[Proposition 5.1]{boyerlines}, $Q_{-1}(K)$ satisfies the  cosmetic surgery conjecture.

If $\tau(K)=0$, following the above reasoning, we see that  $Q_{-1}(K)$ satisfies the  cosmetic surgery conjecture whenever $\Delta_K(t) \neq 2t-5+2t^{-1}$. We further obstruct possible cosmetic surgeries using \cite[Theorem 2]{hanselman}. Since here $\frac{\mathrm{th}(Q_{-1}(K))+2g(Q_{-1}(K))}{2g(Q_{-1}(K))(g(Q_{-1}(K))-1)} =\frac{6}{4}<2$, it follows that  $S^3_{\frac{1}{q}}(Q_{-1}(K)) \not \cong S^3_{-\frac{1}{q}}(Q_{-1}(K))$ for $q \geq 2$. Hence, the only pairs of surgery manifolds we cannot distinguish are $\{S^3_{1}(Q_{-1}(K)), S^3_{-1}(Q_{-1}(K))\}$ and $\{S^3_{2}(Q_{-1}(K)), S^3_{-2}(Q_{-1}(K))\}$, where the companion $K$ satisfies $\Delta_K(t) = 2t-5+2t^{-1}$, for example $K=6_1$.

\item[-] \label{case13} 
Suppose $(g,n) = (1,0)$. Similar to the previous case, if $|\tau(K)|=1$, we see that
\[\Delta_K(t) = (s+1)t - (2s+1) + (s+1)t^{-1},\]
where $s\geq 0$.  So
\[\Delta_{Q_{0}(K)} (t)= \Delta_{U}(t)\Delta_K(t) = (s+1)t - (2s+1) + (s+1)t^{-1},\]
and we obtain $\Delta_{Q_{0}(K)}''(1) = 2s+2\neq 0$. 
If $\tau(K)=0$, the complex $\cfkm(K)$ consists of a one-element staircase and $s\geq 1$ squares centered at $0$. Thus, 
\[\Delta_{Q_{0}(K)} (t)=st - (2s+1) + st^{-1},\]
so $\Delta_{Q_{0}(K)}''(1) = 2s\neq 0$. 
Thus,  $Q_0(K)$ satisfies the cosmetic surgery conjecture.

\item[-] Suppose $(g,n) = (2,-1)$, and suppose $S^3_{r}(Q_{-1}(K))\cong S^3_{r'}(Q_{-1}(K))$.  Since $\mathrm{th}(Q_{-1}(K)) = 6$, $g(Q_{-1}(K)) = 3$, and $\frac{\mathrm{th}(Q_{-1}(K))+2g(Q_{-1}(K))}{2g(Q_{-1}(K))(g(Q_{-1}(K))-1)} =1$, we see that $\{r, r'\} = \{\pm 1\}$. Further, by an argument analogous to the one when $(g,n) = (1,-1)$, $\Delta_K(t)$ must be of the form 
\begin{align*}
bt^2 - (4b+2)t + (6b+5) - (4b+2)t^{-1} + bt^{-2} & \quad \textrm{with } b\geq 1, \quad \textrm{ or}\\ 
bt^2 - (4b-2)t + (6b-5) - (4b-2)t^{-1} + bt^{-2} & \quad \textrm{with } b\geq 2, \quad \textrm{ or}\\
(b+1)t^2 - (4b+6)t + (6b+11) - (4b+6)t^{-1} + (b+1)t^{-2}  & \quad \textrm{with } b\geq 0.
\end{align*} 
An example is $K=8_6$.

\item[-]  Suppose $(g,n) = (2,0)$, and suppose $S^3_{r}(Q_{0}(K))\cong S^3_{r'}(Q_{0}(K))$. 
Since $\thiq = 6$, $\gq = 3$, and $\frac{\mathrm{th}(Q_{0}(K))+2g(Q_{0}(K))}{2g(Q_{0}(K))(g(Q_{0}(K))-1)} =1$, we see that $\{r, r'\} = \{\pm 1\}$. Further, combining \cite[Theorem 1.2]{niwu}   with  \cite[Theorem 1.4]{levine-satellite}, we see that $\tau(K)\in\{-1,0\}$. Similar to above, we then compute that $\Delta_K(t)$ is of the form 
\begin{align*}
bt^2 - 4bt + (6b-1) - 4bt^{-1} + bt^{-2} & \quad \textrm{with }  b\geq 1 \textrm{ when } \tau(K)=-1 \quad \textrm{ or}\\ 
bt^2 - 4bt + (6b+1) - 4bt^{-1} + bt^{-2} & \quad \textrm{with } b\geq 1 \textrm{ when } \tau(K)=-1. 
\end{align*}
An example here is $K=m8_{14}$.
 \end{itemize}

\end{casesp}
\end{casesp}

 This completes the proof of Theorem~\ref{thm:csc} for thin companions.
 \end{proof}



\subsection{L-space companions} Recall that an \textit{L-space} is a rational homology sphere $Y$ with the smallest possible Heegaard Floer homology in the sense that $ \textrm{dim}\ \hfhat(Y) = |H_1(Y)|$. Knots that admit nontrivial L-space surgeries are referred to as \textit{L-space knots}. 

In this subsection, we prove the L-space portion of Theorem~\ref{thm:csc}. We start by computing their $\delta$-thickness values.

\subsubsection{$\delta$-thickness values for L-space companions}\label{sec:lspacethickness}

By \cite[Corollary 1.3]{oszlens} and \cite[Corollaries 8 and 9]{hw}, the Alexander polynomial $\Delta_K(t)$ of any L-space knot $K$ takes the form
\begin{equation*}
\Delta_K(t) = t^{-r_0} - t^{-r_1} + \ldots + (-1)^k t^{-r_k} + (-1)^{k+1} + (-1)^k t^{r_k} + \ldots - t^{r_1} + t^{r_0},
\end{equation*}
for some integers $r_0, r_1, \ldots, r_k$ satisfying
\begin{itemize}
\item[-] $r_0 =  g(K)$
\item[-] $ 0 < r_k < \ldots < r_1 < g(K)$
\item[-] If $k=0$, then $K$ is either the unknot or a trefoil knot
\item[-] If $k \geq 1$, then $r_1 = g(K)-1$.
\end{itemize}

Let $\ell_{i} = r_i - r_{i+1}$, for $i \in \{1, \ldots, k-1\}$. With the above notation, we have the following theorem.

\begin{proposition}\label{prop:lspacethickness}
If $K$ is the unknot or a trefoil, then $\mathrm{th}(Q_n(K))$ is given by Proposition \ref{thm:alt-th}. For all other L-space knots $K$, we have that $k \geq 1$. If $k =1$, then
\begin{equation*}
\mathrm{th}(Q_n(K)) = 
\left\{ \begin{array}{ll}
         2g(K)-n-1 & \textrm{if \,} n \leq -2g(K)\\
         4g(K) -2 & \textrm{if \,} n \in [-2g(K)+1, g(K)]\\
      3g(K)+n-2 & \textrm{if \,} n \geq g(K)+1. \end{array} \right.
\end{equation*}
For $k \geq 2$, suppose that $\ell_1 \geq \ell_2 \geq \ldots \geq \ell_{k-1} \geq r_k$. Then
\begin{equation*}
\mathrm{th}(Q_n(K)) = 
\left\{ \begin{array}{ll}
         2g(K)-n-1 & \textrm{if \,} n \leq -2g(K)\\
         4g(K) -2 & \textrm{if \,} n \in [-2g(K)+1, g(K) + r_2]\\
        3g(K)-r_2+n-2 & \textrm{if \,} n \geq g(K) +r_2 +1. \end{array} \right.
\end{equation*}

\end{proposition}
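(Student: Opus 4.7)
The plan is to follow closely the template of the proof of Proposition \ref{thm:alt-th}, specialized to L-space knots. The key structural simplification is that for an L-space knot $K$, the complex $\cfkm(K)$ has no square summands: it consists of a single staircase, and the horizontally and vertically simplified bases $\widetilde{\boldsymbol\eta}$ and $\widetilde{\boldsymbol\xi}$ may be chosen to coincide (up to a shift of indexing). In particular, $\tau(K) = g(K) = r_0$ and $\epsilon(K) = 1$. The bigradings of the basis elements are determined by the Alexander polynomial: the extremal corners sit at Alexander degrees $\pm r_j$, with consecutive steps along the staircase having lengths that are exactly the differences $r_{i-1} - r_i$, together with a final step of length $r_k$.

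First I would read off the relative $\delta$-gradings of every generator of $\cfahat(\mathcal H)\boxtimes \cfdhat(X_{K,n})$ directly from Table \ref{tab:tensor-gr}, substituting the bigrading data $\bigl(M(\widetilde\eta_{2i-1}), A(\widetilde\eta_{2i-1})\bigr)$ and $\bigl(M(\widetilde\xi_{2i-1}), A(\widetilde\xi_{2i-1})\bigr)$ determined by the staircase parameters $r_0, r_1, \dots, r_k$. Next I would use the differential formulas from Section \ref{ssec:tensor-d} to identify which generators survive to represent classes in $\hfkhat(Q_n(K))$; because $\epsilon(K) = 1$ and no squares are present, the cancellation pattern is completely determined.

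The main obstacle is the combinatorial bookkeeping needed to extract the minimum and maximum values of $\delta_{\mathrm{rel}}$ across all surviving generators. The monotonicity hypothesis $\ell_1 \geq \ell_2 \geq \dots \geq \ell_{k-1} \geq r_k$ is precisely the condition that forces these extrema to be attained at the innermost and outermost corners of the staircase rather than at some interior position of a chain. For instance, the $\delta_{\mathrm{rel}}$-values of generators of type $x_6\boxtimes \kappa_j^i$ and $y_6\boxtimes \kappa_j^i$ depend linearly in $j$ within each vertical chain and have a constant term governed by $M(\widetilde\xi_{2i-1})+A(\widetilde\xi_{2i-1})$; monotonicity is exactly what makes the sequence of chain-by-chain local extrema itself monotone in $i$, so that the global extremum is attained at the innermost chain. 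Analogous reasoning applies to the horizontal chains, contributing generators of type $x_0\boxtimes \eta_{2i-1}$, $y_2\boxtimes \eta_{2i-1}$, $x_1\boxtimes \lambda_j^i$, etc.

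Finally, I would perform the case analysis on the framing $n$ relative to the key thresholds $-2g(K)$, $g(K)$ (when $k=1$) or $g(K)+r_2$ (when $k\geq 2$), and $2g(K)$. When $n$ is sufficiently negative, the unstable-chain generators $x_5\boxtimes \mu_j$ or $x_1\boxtimes \mu_j$ provide the minimum $\delta_{\mathrm{rel}}$, shrinking the thickness linearly in $n$; for $n$ sufficiently large, the generators $x_6\boxtimes\mu_j$ (or their $y_6$ analogues) analogously provide the maximum and produce the $3g(K)+n-2$ or $3g(K)-r_2+n-2$ growth; in the intermediate range the extrema are attained at staircase generators, giving the constant value $4g(K)-2$. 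Comparing max minus min against the claimed formulas in each regime, and separately treating $k=1$ versus $k\geq 2$ (these differ because in the $k=1$ case only $r_0$ and $r_1$ appear, whereas for $k\geq 2$ the innermost-but-one parameter $r_2$ controls where the ``largest" corner in the relevant chain sits), completes the proof.
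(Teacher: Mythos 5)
Your overall strategy coincides with the paper's: specialize the template of Proposition~\ref{thm:alt-th} to the staircase complex of an L-space knot, read the relative $\delta$-gradings off Table~\ref{tab:tensor-gr} using the bigradings determined by $r_0,\ldots,r_k$, cancel generators using the differentials of Section~\ref{ssec:tensor-d}, and invoke the monotonicity hypothesis $\ell_1\geq \cdots\geq \ell_{k-1}\geq r_k$ to force the chain-by-chain extrema of $\delta_{\mathrm{rel}}$ to be themselves monotone in the chain index, so that the global extrema occur at the ends of the staircase; this last point is exactly the content of the paper's Lemmas~\ref{lem:lspacethickness1} and~\ref{lem:lspacethickness2}. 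The case analysis on $n$ against the thresholds $-2g(K)$, $g(K)+r_2$, and $2g(K)=2\tau(K)$ is likewise the same.

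The one genuine gap is your blanket assertion that $\tau(K)=g(K)=r_0$ and $\epsilon(K)=1$. This holds only for L-space knots admitting \emph{positive} L-space surgeries, whereas the proposition is stated for all L-space knots; for a knot admitting a negative L-space surgery one has $\tau(K)=-g(K)$ and $\epsilon(K)=-1$, the staircase is reflected (Figure~\ref{fig:cfkmlspacenegsurgery} versus Figure~\ref{fig:cfkmlspace}), the Maslov gradings of the corners change, and the roles of $\xi_0$ and $\eta_0$ in the unstable chain are exchanged. Since the Mazur pattern is not preserved by mirroring, this case does not reduce to the positive one by a formal symmetry; the paper runs the computation again with the reflected staircase and verifies that the resulting thickness values happen to agree with those from the positive case. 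As written, your argument establishes the proposition only for L-space knots with positive L-space surgeries; you need either to carry out the parallel computation in the negative case or to supply a separate argument that the extremal $\delta_{\mathrm{rel}}$ values are unchanged under the reflection.
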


\begin{proof}
Let $K$ be neither the unknot, nor a trefoil, and let $g=g(K)$.

 \begin{figure}[h]
    \centering
    {\scalefont{0.8}
    \begin{tikzpicture}[xscale=0.62,yscale=0.62]
      \node at (0,-4) (x1) {$\widetilde{\xi}_{0}$};
      \node at (2,-4) (x2) {$\widetilde{\omega}_{1}$};
      \node at (2,-6) (x3) {$\widetilde{\omega}_{2}$};
      \node at (4,-8) (x4) {$\widetilde{\omega}_{k-1}$};
      \node at (6,-8) (x5) {$\widetilde{\omega}_{k}$};
      \node at (6,-10) (x6) {$\widetilde{\omega}_{k+1}$}; 
      \node at (8,-10) (x7) {$\widetilde{\theta}_{k}$};
      \node at (8,-12) (x8) {$\widetilde{\theta}_{k-1}$};
      \node at (10,-14) (x9) {$\widetilde{\theta}_{2}$};
      \node at (12,-14) (x10) {$\widetilde{\theta}_{1}$};
      \node at (12,-16) (x11) {$\widetilde{\eta}_{0}$};
      \draw[algarrow] (x2) to node[above] {$1$} (x1);
      \draw[algarrow] (x2) to node[right] {$\ell_1$} (x3);
      \draw[algarrow] (x5) to node[above] {$\ell_{k-1}$} (x4);
      \draw[algarrow] (x5) to node[right] {$r_k$} (x6);
      \draw[algarrow] (x7) to node[above] {$r_k$} (x6);
      \draw[algarrow] (x7) to node[right] {$\ell_{k-1}$} (x8);
      \draw[loosely dotted, thick] (x3) to node[right] {} (x4);
      \draw[loosely dotted, thick] (x8) to node[right] {} (x9);
      \draw[algarrow] (x10) to node[above] {$\ell_{1}$} (x9);
      \draw[algarrow] (x10) to node[right] {$1$} (x11);
    \end{tikzpicture}
    \begin{tikzpicture}[xscale=0.62,yscale=0.62]
      \node at (0,-4) (x1) {$\widetilde{\xi}_{0}$};
      \node at (2,-4) (x2) {$\widetilde{\omega}_{1}$};
      \node at (2,-6) (x3) {$\widetilde{\omega}_{2}$};
      \node at (5,-7) (x4) {$\widetilde{\omega}_{k-1}$};
      \node at (5,-9) (x5) {$\widetilde{\omega}_{k}$};
      \node at (7,-9) (x6) {$\widetilde{\omega}_{k+1}$}; 
      \node at (7,-11) (x7) {$\widetilde{\theta}_{k}$};
      \node at (9,-11) (x8) {$\widetilde{\theta}_{k-1}$};
      \node at (10,-14) (x9) {$\widetilde{\theta}_{2}$};
      \node at (12,-14) (x10) {$\widetilde{\theta}_{1}$};
      \node at (12,-16) (x11) {$\widetilde{\eta}_{0}$};
      \draw[algarrow] (x2) to node[above] {$1$} (x1);
      \draw[algarrow] (x2) to node[right] {$\ell_1$} (x3);
      \draw[algarrow] (x4) to node[right] {$\ell_{k-1}$} (x5);
      \draw[algarrow] (x6) to node[above] {$r_k$} (x5);
      \draw[algarrow] (x6) to node[right] {$r_k$} (x7);
      \draw[algarrow] (x8) to node[above] {$\ell_{k-1}$} (x7);
      \draw[loosely dotted, thick] (x3) to node[right] {} (x5);
      \draw[loosely dotted, thick] (x7) to node[right] {} (x9);
      \draw[algarrow] (x10) to node[above] {$\ell_{1}$} (x9);
      \draw[algarrow] (x10) to node[right] {$1$} (x11);
    \end{tikzpicture}	
    }	
    \caption{$\cfkm(K)$ for L-space knots $K$ that admit positive L-space surgeries. The left staircase is for $k$ odd, while the right staircase is for $k$ even.}
    \label{fig:cfkmlspace}
  \end{figure}

Assume $K$ admits a positive L-space surgery.  By \cite[Theorem 1.2]{oszlens} and \cite[Remark 6.6]{homsconcord}, there exists a basis $\{ \widetilde{\xi}_0, \widetilde{\omega}_1, \ldots, \widetilde{\omega}_k, \widetilde{\omega}_{k+1}, \widetilde{\theta}_k, \ldots, \widetilde{\theta}_1,  \widetilde{\eta}_0 \}$ for $\cfkm(K)$ with respect to which $\cfkm(K)$ looks like a right-handed staircase where the heights and widths of the steps are given by $1, \ell_i , \textrm{and}\ r_k$. See Figure \ref{fig:cfkmlspace}. We first consider the case where $k$ is odd. Then $\cfkm(K)$ is given by the left staircase in Figure~\ref{fig:cfkmlspace}, and the basis elements $\widetilde{\xi}_0$, $\widetilde{\omega}_1$, $\ldots, \widetilde{\omega}_k$, $\widetilde{\omega}_{k+1}$, $\widetilde{\theta}_k$, \ldots, $\widetilde{\theta}_1$,  and $\widetilde{\eta}_0$ have Alexander $A$ and Maslov $M$ gradings given by Table \ref{tab:amlspace}. The Alexander gradings come from the powers of the Alexander polynomial $\Delta_K(t)$. The Maslov gradings for $\widetilde{\xi}_0$ and $\widetilde{\eta}_0$ come from Equation \ref{eqn:gradings}. The rest of the Maslov gradings come from the following facts:
\begin{itemize}
\item[-] If $\widetilde{z}_i \rightarrow \widetilde{z}_{i+1}$ is a vertical arrow in $\cfkm(K)$ of length $\ell_i$, then
\begin{equation}\label{eq:vertMas}
M(\widetilde{z}_{i+1}) = M(\widetilde{z}_i) -1.
\end{equation}
\item[-] If $\widetilde{z}_i \rightarrow \widetilde{z}_{i+1}$ is a horizontal arrow in $\cfkm(K)$ of length $\ell_i$, then 
\begin{equation}\label{eq:horMas}
M(\widetilde{z}_{i+1}) = M(\widetilde{z}_i) +2\ell_i -1.
\end{equation}
\end{itemize}
We will also need for later the analogous relations for the Alexander grading:
\begin{itemize}
\item[-] If $\widetilde{z}_i \rightarrow \widetilde{z}_{i+1}$ is a vertical arrow in $\cfkm(K)$ of length $\ell_i$, then
\begin{equation}\label{eq:vertAlex}
A(\widetilde{z}_{i+1}) = A(\widetilde{z}_i) -\ell_i.
\end{equation}
\item[-] If $\widetilde{z}_i \rightarrow \widetilde{z}_{i+1}$ is a horizontal arrow in $\cfkm(K)$ of length $\ell_i$, then 
\begin{equation}\label{eq:horAlex}
A(\widetilde{z}_{i+1}) = A(\widetilde{z}_i) +\ell_i.
\end{equation}
\end{itemize}

Equations \ref{eq:vertMas} - \ref{eq:horAlex} follow from the definition of vertical and horizontal arrows in \cite[Chapter 11.5]{bfh2}, along with \cite[Equations 11.10 and 11.11]{bfh2}: 
\begin{itemize}
\item[-]If $\widetilde{z}_i \rightarrow \widetilde{z}_{i+1}$ is a vertical arrow in $\cfkm(K)$ of length $\ell_i$, then by definition $\bdy \widetilde{z}_i \equiv \widetilde{z}_{i+1} \mod (U\cdot \cfkm(K))$ and $A(\widetilde{z}_i) - A(\widetilde{z}_{i+1}) = \ell_i$, the former implying $M(\widetilde{z}_{i+1}) = M(\widetilde{z}_i)-1$. 
\item[-]If $\widetilde{z}_i \rightarrow \widetilde{z}_{i+1}$ is a horizontal  arrow in $\cfkm(K)$ of length $\ell_i$, then  by definition $\bdy \widetilde{z}_i  = U^{\ell_i}\widetilde{z}_{i+1}$ and $A(\widetilde{z}_i) = A(U^{\ell_i}\widetilde{z}_{i+1})$, the former implying $M(\widetilde{z}_{i+1})- 2\ell_i = M(U^{\ell_i}\widetilde{z}_{i+1}) = M(\widetilde{z}_i)-1$, and the latter implying $A(\widetilde{z}_i) = A(U^{\ell_i}\widetilde{z}_{i+1}) = A(\widetilde{z}_{i+1})- \ell_i$.
\end{itemize}

\begin{table}[h]
\begin{center}
  \begin{tabular}{ | c | c | c | }
    \hline
    Basis element & $A$ & $M$  \\
    \hline
    \hline
    $\widetilde{\xi}_0$ & $r_0=g$ & 0  \\ \hline
    $\widetilde{\omega}_i, i \in \{1, \ldots, k\} \, \mathrm{odd}$ & $r_i$ & $-1 - 2\sum\limits_{j=1}^{\frac{i-1}{2}} \ell_{2j}$  \\ \hline
    $\widetilde{\omega}_i, i \in \{1, \ldots, k\} \, \mathrm{even}$ & $r_i$ & $-2 - 2\sum\limits_{j=1}^{\frac{i-2}{2}} \ell_{2j} $ \\ \hline
    $\widetilde{\omega}_{k+1}$ & $0$ & $-2 - 2\sum\limits_{j=1}^{\frac{k-1}{2}} \ell_{2j} $ \\ \hline
    $\widetilde{\theta}_i, i \in \{1, \ldots, k\} \, \mathrm{odd}$ & $-r_i$ & $-2g + 1 + 2\sum\limits_{j=1}^{\frac{i-1}{2}} \ell_{2j-1}$ \\ \hline
    $\widetilde{\theta}_i, i \in \{1, \ldots, k\} \, \mathrm{even}$ & $-r_i$ &  $-2g + 2\sum\limits_{j=1}^{\frac{i}{2}} \ell_{2j-1} $  \\ \hline
    $\widetilde{\eta}_0$ & $-r_0=-g$ & $-2g$  \\ \hline
     \end{tabular}
\end{center}
\caption {The Alexander and Maslov gradings of the basis elements $\widetilde{\xi}_0$, $\widetilde{\omega}_i$, $\widetilde{\theta}_i$, and $\widetilde{\eta}_0$ of $\cfkm(K)$, when $K$ admits a positive L-space surgery and $k$ is odd.}
\label{tab:amlspace} 
\end{table}

Now since the basis $\{ \widetilde{\xi}_0, \widetilde{\omega}_1, \ldots, \widetilde{\omega}_k, \widetilde{\omega}_{k+1}, \widetilde{\theta}_k, \ldots, \widetilde{\theta}_1,  \widetilde{\eta}_0 \}$ is horizontally and vertically simplified, the invariant $\cfdhat(X_{K,n})$ is given by Figure \ref{fig:cfdlspace}. Note that in Figure \ref{fig:cfdlspace} we're using slightly different labels for the vertical and horizontal chain generators of $\iota_1\cfdhat(X_{K,n})$ than we had used earlier in the paper. Namely, the superscript of a generator in a given vertical or horizontal chain refers to the starting generator for that chain.

To compute $\mathrm{th}(Q_n(K))$, we need the relative $\delta$-gradings of the generators of $\hfkhat(Q_n(K)) \cong H_{*}(\cfahat(\mathcal{H}) \boxtimes \cfdhat(X_{K,n}))$.

  \begin{figure}[h]
    \centering
    {\scalefont{0.8}
    \begin{tikzpicture}[xscale=0.62,yscale=0.62]
      \node at (0,-4) (x1) {$\xi_{0}$};
      \node at (2,-4) (x2) {$\lambda_{1}^{\omega_1}$};
      \node at (4,-4) (x3) {$\omega_{1}$};
      \node at (4,-6) (x4) {$\kappa_{1}^{\omega_1}$};
      \node at (4,-8) (x7) {$\kappa_{\ell_1}^{\omega_1}$};
      \node at (4,-10) (x8) {$\omega_{2}$};
      \node at (6,-12) (x9) {$\theta_{2}$};
      \node at (8,-12) (x10) {$\lambda_{\ell_1}^{\theta_1}$};
      \node at (10,-12) (x13) {$\lambda_{1}^{\theta_1}$};
      \node at (12,-12) (x14) {$\theta_{1}$};
      \node at (12,-14) (x15) {$\kappa_{1}^{\theta_1}$};
      \node at (12,-16) (x16) {$\eta_{0}$};
      \draw[algarrow] (x2) to node[above] {$\rho_2$} (x1);
      \draw[algarrow] (x3) to node[above] {$\rho_3$} (x2);
      \draw[algarrow] (x3) to node[right] {$\rho_1$} (x4);
      \draw[algarrow] (x8) to node[right] {$\rho_{123}$} (x7);
      \draw[algarrow] (x10) to node[above] {$\rho_{2}$} (x9);
      \draw[algarrow] (x14) to node[above] {$\rho_{3}$} (x13);
      \draw[algarrow] (x14) to node[right] {$\rho_{1}$} (x15);
      \draw[algarrow] (x16) to node[right] {$\rho_{123}$} (x15);
      \draw[loosely dotted, thick] (x10) to node[above] {} (x13);
      \draw[loosely dotted, thick] (x4) to node[right] {} (x7);
      \draw[loosely dotted, thick] (x9) to node[right] {} (x8);
      \draw[unst1] (x1) to node[below] {} (x16);
    \end{tikzpicture}	
    }	
\caption{$\cfdhat(X_{K,n})$ for L-space knots $K$ that admit positive L-space surgeries. The dotted arrow represents the unstable chain.}
\label{fig:cfdlspace}
  \end{figure}
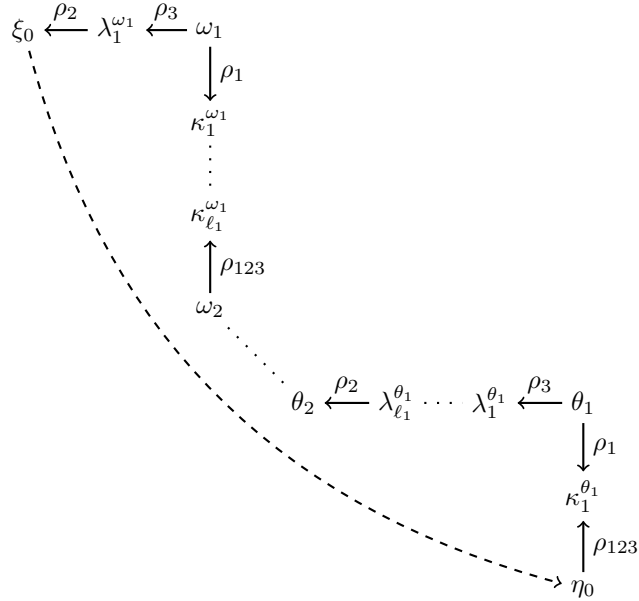

We consider several cases, depending on the framing $n$ relative to $2\tau(K) = 2g$.
\begin{itemize}
\item[-] When $n = 2g$, the unstable chain in $\cfdhat(X_{K,n})$ takes the form
\begin{equation*}
\xi_0  \xrightarrow{\textrm{$D_{12}$}} \eta_0. 
\end{equation*}
Since we're using different notation for the generators of $\cfdhat(X_{K,n})$, below we list in this notation the nontrivial differentials in $\cfahat(\mathcal H)\boxtimes \cfdhat(X_{K,n})$ that were computed in Section~\ref{ssec:tensor-d}:
\begin{align*}
\dbox (x_2\boxtimes \xi_0) &= x_0\boxtimes \eta_0 & \dbox (x_4\boxtimes \xi_0) &= y_2\boxtimes \eta_0  &  &\\
\dbox (x_1\boxtimes \lambda_{1}^{\omega_1}) &= x_0\boxtimes \xi_{0} & \dbox (x_3\boxtimes \lambda_{1}^{\omega_1}) &= y_2\boxtimes \xi_{0} &  &\\
\dbox (x_1\boxtimes \lambda_{r_k}^{\theta_k}) &= x_0\boxtimes \omega_{k+1} &  \dbox (x_3\boxtimes \lambda_{r_k}^{\theta_k}) &= y_2\boxtimes \omega_{k+1} &  &\\
\dbox (x_1\boxtimes \lambda_{l_{i-1}}^{\omega_i}) &= x_0\boxtimes \omega_{i-1} &  \dbox (x_3\boxtimes \lambda_{l_{i-1}}^{\omega_i}) &= y_2\boxtimes \omega_{i-1} & i\in \{3,\ldots, k\}  \mathrm{\, odd} &\\
\dbox (x_1\boxtimes \lambda_{l_{i}}^{\theta_i}) &= x_0\boxtimes \theta_{i+1} &  \dbox (x_3\boxtimes \lambda_{l_{i}}^{\theta_i}) &= y_2 \boxtimes \theta_{i+1} &  i\in \{1,\ldots, k-2\}  \mathrm{\, odd} &\\
\dbox (x_2\boxtimes \omega_{i}) &= x_1\boxtimes \kappa_1^{\omega_i} & \dbox (x_2\boxtimes \theta_{i}) &= x_1\boxtimes \kappa_1^{\theta_i} & i\in\{1,\ldots, k\} \mathrm{\, odd} &\\
\dbox (x_4\boxtimes \omega_i) &= x_3\boxtimes \kappa_1^{\omega_i} & \dbox (x_4\boxtimes \theta_i) &= x_3\boxtimes \kappa_1^{\theta_i} &  i\in\{1,\ldots, k\} \mathrm{\, odd}  & \\
\dbox (y_4\boxtimes \omega_i) &= y_3\boxtimes \kappa_1^{\omega_i} & \dbox (y_4\boxtimes \theta_i) &= y_3\boxtimes \kappa_1^{\theta_i}  &  i\in\{1,\ldots, k\} \mathrm{\, odd}  &
\end{align*}
Thus, the generators not listed above (either as the source or target of the differential $\partial^{\boxtimes}$) generate the homology $H_{\ast}(\cfahat(\mathcal H)\boxtimes \cfdhat(X_{K, n})) \cong \hfkhat(Q_n(K))$. Table \ref{tab:generalcasen=2g} gives these generators of $\hfkhat(Q_n(K))$, together with their relative $\delta$-gradings (which were obtained using Tables \ref{tab:tensor-gr} and \ref{tab:amlspace}).

To get $\mathrm{th}(Q_n(K))$, we need the minimum and maximum relative $\delta$-gradings of all generators in $\hfkhat(Q_n(K))$. The minimal relative $\delta$-grading can be found as follows: first, for each row in Table \ref{tab:generalcasen=2g}, we get the minimal relative $\delta$-grading of all generators lying in that row, then we take the absolute minimum over all of the rows in Table \ref{tab:generalcasen=2g}. The maximum relative $\delta$-grading of all generators in $\hfkhat(Q_n(K))$ can be computed in a similar way.

For concreteness, we now demonstrate how to compute the minimum and maximum relative $\delta$-gradings of the generators of $\hfkhat(Q_n(K))$ of the form $y_3 \boxtimes \lambda_{j}^{\omega_i}$, where $i \in \{3, \ldots, k\} \mathrm{\, odd} \,$ and $j \in \{1, \ldots, \ell_{i-1} \}$ (these are all the generators in Row 10 of Table \ref{tab:generalcasen=2g}). Note that these generators $y_3 \boxtimes \lambda_{j}^{\omega_i}$ only exist when $k \geq 3$. We will need the following two lemmas:

\begin{lemma}\label{lem:lspacethickness1}
For every $i \in \{3, \ldots, k\}$ odd, $M(\widetilde{\omega}_i) - A(\widetilde{\omega}_i) \geq M(\widetilde{\omega}_{i-2}) - A(\widetilde{\omega}_{i-2})$.
\end{lemma}

 \begin{proof}
The generators $\widetilde{\omega}_i$ and $\widetilde{\omega}_{i-1}$ are connected by a horizontal arrow $\widetilde{\omega}_{i-1} \leftarrow \widetilde{\omega}_{i}$ of length $\ell_{i-1}$. By Equations \ref{eq:horMas} and \ref{eq:horAlex}, we have that
\begin{equation}\label{eq:MAdiffhorizontal}
\begin{split}
M(\widetilde{\omega}_{i}) - A(\widetilde{\omega}_{i}) & = M(\widetilde{\omega}_{i-1}) -2\ell_{i-1} +1 - (A(\widetilde{\omega}_{i-1}) - \ell_{i-1}) \\
& = M(\widetilde{\omega}_{i-1}) - A(\widetilde{\omega}_{i-1}) - \ell_{i-1} +1.
\end{split}
\end{equation}
Now consider the generators $\widetilde{\omega}_{i-1}$ and $\widetilde{\omega}_{i-2}$. They are connected by a vertical arrow $\widetilde{\omega}_{i-2} \rightarrow \widetilde{\omega}_{i-1}$ of length $\ell_{i-2}$. By Equations \ref{eq:vertMas} and \ref{eq:vertAlex}, we have that
\begin{equation}\label{eq:MAdiffvertical}
\begin{split}
M(\widetilde{\omega}_{i-1}) - A(\widetilde{\omega}_{i-1})  & = M(\widetilde{\omega}_{i-2}) - 1 - (A(\widetilde{\omega}_{i-2}) - \ell_{i-2}) \\
& = M(\widetilde{\omega}_{i-2}) - A(\widetilde{\omega}_{i-2}) + \ell_{i-2} -1.
\end{split} 
\end{equation}
Plugging Equation \ref{eq:MAdiffvertical} into Equation \ref{eq:MAdiffhorizontal} gives:
\begin{equation*}
\begin{split}
M(\widetilde{\omega}_{i}) - A(\widetilde{\omega}_{i}) & = M(\widetilde{\omega}_{i-2}) - A(\widetilde{\omega}_{i-2}) + \ell_{i-2} -1 - \ell_{i-1} +1 \\
& = M(\widetilde{\omega}_{i-2}) - A(\widetilde{\omega}_{i-2}) + \ell_{i-2} - \ell_{i-1}.
\end{split}
\end{equation*}
By assumption, $\ell_1 \geq \ell_2 \geq \ldots \geq \ell_{k-1}$. This tells us that $\ell_{i-2} - \ell_{i-1} \geq 0$, which means that $M(\widetilde{\omega}_i) - A(\widetilde{\omega}_i) \geq M(\widetilde{\omega}_{i-2}) - A(\widetilde{\omega}_{i-2})$, as desired.
\end{proof}

\begin{lemma}\label{lem:lspacethickness2}
For every $i \in \{5, \ldots, k\}$ odd, $M(\widetilde{\omega}_i) - A(\widetilde{\omega}_i) + \ell_{i-1} \leq M(\widetilde{\omega}_{i-2}) - A(\widetilde{\omega}_{i-2}) + \ell_{i-3}$.
\end{lemma}

\begin{proof}
Using Equations \ref{eq:MAdiffhorizontal} and \ref{eq:MAdiffvertical}, we get that
\begin{equation*}
\begin{split}
M(\widetilde{\omega}_{i}) - A(\widetilde{\omega}_{i}) + \ell_{i-1} & = M(\widetilde{\omega}_{i-1}) - A(\widetilde{\omega}_{i-1}) +1 \\
& =  M(\widetilde{\omega}_{i-2}) - A(\widetilde{\omega}_{i-2}) + \ell_{i-2}.
\end{split}
\end{equation*}
Since $\ell_1 \geq \ell_2 \geq \ldots \geq \ell_{k-1}$, we have that $\ell_{i-3} \geq \ell_{i-2}$. This implies that
\begin{equation*}
M(\widetilde{\omega}_{i}) - A(\widetilde{\omega}_{i}) + \ell_{i-1} \leq M(\widetilde{\omega}_{i-2}) - A(\widetilde{\omega}_{i-2}) + \ell_{i-3},
\end{equation*}
which concludes the proof.
\end{proof}

With Lemmas \ref{lem:lspacethickness1} and \ref{lem:lspacethickness2}, we can now calculate the minimum and maximum relative $\delta$-gradings of the generators of $\hfkhat(Q_n(K))$ of the form $y_3 \boxtimes \lambda_{j}^{\omega_i}$, where $i \in \{3, \ldots, k\} \mathrm{\, odd} \,$ and $j \in \{1, \ldots, \ell_{i-1} \}$:
\begin{equation}\label{absmindelta}
\begin{split}
\min_{\substack{i \in \{3, \ldots, k\} \mathrm{\, odd} \\ j \in \{1, \ldots, \ell_{i-1} \}}} \{\delta_{\mathrm{rel}}(y_3 \boxtimes \lambda_{j}^{\omega_i}) \} & =  \min_{\substack{i}} \{\min_{\substack{j }} \{\delta_{\mathrm{rel}}(y_3 \boxtimes \lambda_{j}^{\omega_i}) \} \} \\
& = \min_{\substack{i}} \{\min_{\substack{j }} \{M(\tilde{\omega}_i) -A(\tilde{\omega}_i) +j + 2g+1 \} \} \\
& = \min_{\substack{i}} \{M(\tilde{\omega}_i) -A(\tilde{\omega}_i) + 2g+2 \} \\
& = M(\tilde{\omega}_3) -A(\tilde{\omega}_3) + 2g+2 \, \, \, \, \textrm{(by Lemma \ref{lem:lspacethickness1})} \\
& = -2r_2 + r_3 + 2g+1 \, \, \, \, \textrm{(by Table \ref{tab:amlspace})}
\end{split}
\end{equation}

\begin{equation}\label{absmaxdelta}
\begin{split}
\max_{\substack{i \in \{3, \ldots, k\} \mathrm{\, odd} \\ j \in \{1, \ldots, \ell_{i-1} \}}} \{\delta_{\mathrm{rel}}(y_3 \boxtimes \lambda_{j}^{\omega_i}) \} & =  \max_{\substack{i}} \{\max_{\substack{j }} \{\delta_{\mathrm{rel}}(y_3 \boxtimes \lambda_{j}^{\omega_i}) \} \} \\
& = \max_{\substack{i}} \{\max_{\substack{j }} \{M(\tilde{\omega}_i) -A(\tilde{\omega}_i) +j + 2g+1 \} \} \\
& = \max_{\substack{i}} \{M(\tilde{\omega}_i) -A(\tilde{\omega}_i) + \ell_{i-1}+ 2g+1 \} \\
& = M(\tilde{\omega}_3) -A(\tilde{\omega}_3) + \ell_2 + 2g+1  \, \, \, \, \textrm{(by Lemma \ref{lem:lspacethickness2})} \\
& = -r_2 + 2g  \, \, \, \, \textrm{(by Table \ref{tab:amlspace})}.
\end{split}
\end{equation}

With a similar approach, we can also calculate the minimum and maximum relative $\delta$-gradings of the generators of $\hfkhat(Q_n(K))$ lying in each of the other rows of Table \ref{tab:generalcasen=2g}.

One can check that when $k$ is odd and $\geq 3$, the minimum over all these minimum relative $\delta$-gradings is $-3g+2$, and the maximum over all these maximum relative $\delta$-gradings is $-r_2 + 2g$. Hence, when $k$ is odd and $\geq 3$, $\mathrm{th}(Q_n(K)) =-r_2 + 5g-2$. In a similar way, one can check that when $k=1$, the minimum relative $\delta$-grading over all the generators of $\hfkhat(Q_n(K))$ is $-3g+2$, and the maximum relative $\delta$-grading over all generators of $\hfkhat(Q_n(K))$ is $2g$. Hence, when $k=1$, $\mathrm{th}(Q_n(K)) =5g-2$. This concludes the $n=2g$ case.

\begin{table}[h]
\begin{center}
\resizebox{\textwidth}{!}{
  \begin{tabular}{ | c | c || c | c | }
    \hline
    Generator & $\delta_{\mathrm{rel}}$ & Generator & $\delta_{\mathrm{rel}}$\\ 
    \hline
    \hline
    
    $y_1 \boxtimes \lambda_{1}^{\omega_1}$ & $-3g+2$ & $y_1 \boxtimes \kappa_{1}^{\theta_1}$ & $g+1$ \\ \hline
    $y_3 \boxtimes \lambda_{1}^{\omega_1}$ & $g+2$ & $x_5 \boxtimes \kappa_{1}^{\theta_1}$ & $g+1$ \\ \hline
    $x_5 \boxtimes \lambda_{1}^{\omega_1}$ & $-3g+2$ & $y_5 \boxtimes \kappa_{1}^{\theta_1}$ & $g+1$ \\ \hline
    $y_5 \boxtimes \lambda_{1}^{\omega_1}$ & $-3g+2$ & $x_6 \boxtimes \kappa_{1}^{\theta_1}$ & $-3g+2$ \\ \hline
    $x_6 \boxtimes \lambda_{1}^{\omega_1}$ & $g+1$ & $y_6 \boxtimes \kappa_{1}^{\theta_1}$ & $-3g+2$ \\ \hline
    $y_6 \boxtimes \lambda_{1}^{\omega_1}$ & $g+1$ & $x_1 \boxtimes \kappa_{j}^{\omega_i}, i \in \{1, \ldots, k-2\} \mathrm{\, odd},\, j \in \{2, \ldots, \ell_{i}\}$ & $M(\tilde{\omega}_i) - A(\tilde{\omega}_i) + j + 2g -1$ \\ \hline
    
    $x_1 \boxtimes \lambda_{j}^{\omega_i}, i \in \{3, \ldots, k\} \mathrm{\, odd}, \, j \in \{1, \ldots, \ell_{i-1} -1\}$ & $M(\tilde{\omega}_i) -3A(\tilde{\omega}_i) -j$ & $y_1 \boxtimes \kappa_{j}^{\omega_i}, i \in \{1, \ldots, k-2\} \mathrm{\, odd}, \, j \in \{1, \ldots, \ell_{i}\}$ & $\delta_\textrm{rel}(x_1 \boxtimes \kappa_{j}^{\omega_i}) +1$ \\ \hline
    $y_1 \boxtimes \lambda_{j}^{\omega_i}, i \in \{3, \ldots, k\} \mathrm{\, odd}, \, j \in \{1, \ldots, \ell_{i-1} \}$ & $\delta_\textrm{rel}(x_1 \boxtimes \lambda_{j}^{\omega_i})+1$ & $x_3 \boxtimes \kappa_{j}^{\omega_i}, i \in \{1, \ldots, k-2\} \mathrm{\, odd}, \, j \in \{2, \ldots, \ell_{i}\}$ & $M(\tilde{\omega}_i) + A(\tilde{\omega}_i) - j + 2$ \\ \hline
    $x_3 \boxtimes \lambda_{j}^{\omega_i}, i \in \{3, \ldots, k\} \mathrm{\, odd}, \, j \in \{1, \ldots, \ell_{i-1} -1\}$ & $M(\tilde{\omega}_i) -A(\tilde{\omega}_i) +j + 2g+1$ & $y_3 \boxtimes \kappa_{j}^{\omega_i}, i \in \{1, \ldots, k-2\} \mathrm{\, odd}, \, j \in \{2, \ldots, \ell_{i}\}$ & $\delta_\textrm{rel}(x_3 \boxtimes \kappa_{j}^{\omega_i})$ \\ \hline
    $y_3 \boxtimes \lambda_{j}^{\omega_i}, i \in \{3, \ldots, k\} \mathrm{\, odd}, \, j \in \{1, \ldots, \ell_{i-1} \}$ & $\delta_\textrm{rel}(x_3 \boxtimes \lambda_{j}^{\omega_i})$ & $x_5 \boxtimes \kappa_{j}^{\omega_i}, i \in \{1, \ldots, k-2\} \mathrm{\, odd}, \, j \in \{1, \ldots, \ell_{i}\}$ & $\delta_\textrm{rel}(y_1 \boxtimes \kappa_{j}^{\omega_i}) $ \\ \hline
    $x_5 \boxtimes \lambda_{j}^{\omega_i}, i \in \{3, \ldots, k\} \mathrm{\, odd}, \, j \in \{1, \ldots, \ell_{i-1} \}$ & $\delta_\textrm{rel}(y_1 \boxtimes \lambda_{j}^{\omega_i})$ & $y_5 \boxtimes \kappa_{j}^{\omega_i}, i \in \{1, \ldots, k-2\} \mathrm{\, odd}, \, j \in \{1, \ldots, \ell_{i}\}$ & $\delta_\textrm{rel}(x_5 \boxtimes \kappa_{j}^{\omega_i}) $ \\ \hline
    $y_5 \boxtimes \lambda_{j}^{\omega_i}, i \in \{3, \ldots, k\} \mathrm{\, odd}, \, j \in \{1, \ldots, \ell_{i-1} \}$ & $\delta_\textrm{rel}(x_5 \boxtimes \lambda_{j}^{\omega_i})$ & $x_6 \boxtimes \kappa_{j}^{\omega_i}, i \in \{1, \ldots, k-2\} \mathrm{\, odd}, \, j \in \{1, \ldots, \ell_{i}\}$ & $\delta_\textrm{rel}(x_3 \boxtimes \kappa_{j}^{\omega_i}) -1$ \\ \hline
    $x_6 \boxtimes \lambda_{j}^{\omega_i}, i \in \{3, \ldots, k\} \mathrm{\, odd}, \, j \in \{1, \ldots, \ell_{i-1} \}$ & $\delta_\textrm{rel}(x_3 \boxtimes \lambda_{j}^{\omega_i})-1$ & $y_6 \boxtimes \kappa_{j}^{\omega_i}, i \in \{1, \ldots, k-2\} \mathrm{\, odd}, \, j \in \{1, \ldots, \ell_{i}\}$ & $\delta_\textrm{rel}(x_6 \boxtimes \kappa_{j}^{\omega_i})$ \\ \hline
    $y_6 \boxtimes \lambda_{j}^{\omega_i}, i \in \{3, \ldots, k\} \mathrm{\, odd}, \, j \in \{1, \ldots, \ell_{i-1} \}$ & $\delta_\textrm{rel}(x_6 \boxtimes \lambda_{j}^{\omega_i})$ & $x_1 \boxtimes \kappa_{j}^{\omega_k}, j \in \{2, \ldots, r_k \}$ & $M(\tilde{\omega}_k) - A(\tilde{\omega}_k) + j + 2g -1$ \\ \hline
    
    $x_1 \boxtimes \lambda_{j}^{\theta_k}, j \in \{1, \ldots, r_k -1\}$ & $M(\tilde{\theta}_k) -3A(\tilde{\theta}_k) -j$ & $y_1 \boxtimes \kappa_{j}^{\omega_k}, j \in \{1, \ldots, r_k \}$ & $\delta_\textrm{rel}(x_1 \boxtimes \kappa_{j}^{\omega_k})+1$ \\ \hline
    $y_1 \boxtimes \lambda_{j}^{\theta_k}, j \in \{1, \ldots, r_k \}$ & $\delta_\textrm{rel}(x_1 \boxtimes \lambda_{j}^{\theta_k})+1$ & $x_3 \boxtimes \kappa_{j}^{\omega_k}, j \in \{2, \ldots, r_k \}$ & $M(\tilde{\omega}_k) + A(\tilde{\omega}_k) - j + 2$ \\ \hline
    $x_3 \boxtimes \lambda_{j}^{\theta_k}, j \in \{1, \ldots, r_k -1\}$ & $M(\tilde{\theta}_k) - A(\tilde{\theta}_k) + j +2g +1$ & $y_3 \boxtimes \kappa_{j}^{\omega_k}, j \in \{2, \ldots, r_k \}$ & $\delta_\textrm{rel}(x_3 \boxtimes \kappa_{j}^{\omega_k})$ \\ \hline
    $y_3 \boxtimes \lambda_{j}^{\theta_k}, j \in \{1, \ldots, r_k \}$ & $\delta_\textrm{rel}(x_3 \boxtimes \lambda_{j}^{\theta_k})$ & $x_5 \boxtimes \kappa_{j}^{\omega_k}, j \in \{1, \ldots, r_k \}$ & $\delta_\textrm{rel}(y_1 \boxtimes \kappa_{j}^{\omega_k})$ \\ \hline
    $x_5 \boxtimes \lambda_{j}^{\theta_k}, j \in \{1, \ldots, r_k \}$ & $\delta_\textrm{rel}(y_1 \boxtimes \lambda_{j}^{\theta_k})$ & $y_5 \boxtimes \kappa_{j}^{\omega_k}, j \in \{1, \ldots, r_k \}$ & $\delta_\textrm{rel}(x_5 \boxtimes \kappa_{j}^{\omega_k})$ \\ \hline
    $y_5 \boxtimes \lambda_{j}^{\theta_k}, j \in \{1, \ldots, r_k \}$ & $\delta_\textrm{rel}(x_5 \boxtimes \lambda_{j}^{\theta_k})$ & $x_6 \boxtimes \kappa_{j}^{\omega_k}, j \in \{1, \ldots, r_k \}$ & $\delta_\textrm{rel}(x_3 \boxtimes \kappa_{j}^{\omega_k})-1$ \\ \hline
    $x_6 \boxtimes \lambda_{j}^{\theta_k}, j \in \{1, \ldots, r_k \}$ & $\delta_\textrm{rel}(x_3 \boxtimes \lambda_{j}^{\theta_k})-1$ & $y_6 \boxtimes \kappa_{j}^{\omega_k}, j \in \{1, \ldots, r_k \}$ & $\delta_\textrm{rel}(x_6 \boxtimes \kappa_{j}^{\omega_k})$ \\ \hline
    $y_6 \boxtimes \lambda_{j}^{\theta_k}, j \in \{1, \ldots, r_k \}$ & $\delta_\textrm{rel}(x_6 \boxtimes \lambda_{j}^{\theta_k})$ & $x_1 \boxtimes \kappa_{j}^{\theta_i}, i \in \{3, \ldots, k\} \mathrm{\, odd},\, j \in \{2, \ldots, \ell_{i-1}\}$ & $M(\tilde{\theta}_i) -A(\tilde{\theta}_i) + j +2g -1$ \\ \hline
    
    $x_1 \boxtimes \lambda_{j}^{\theta_i}, i \in \{1, \ldots, k-2\} \mathrm{\, odd},\, j \in \{1, \ldots, \ell_{i} -1\}$ & $M(\tilde{\theta}_i) -3A(\tilde{\theta}_i) -j$ & $y_1 \boxtimes \kappa_{j}^{\theta_i}, i \in \{3, \ldots, k\} \mathrm{\, odd},\, j \in \{1, \ldots, \ell_{i-1}\}$ & $\delta_\textrm{rel}(x_1 \boxtimes \kappa_{j}^{\theta_i}) + 1$ \\ \hline
    $y_1 \boxtimes \lambda_{j}^{\theta_i}, i \in \{1, \ldots, k-2\} \mathrm{\, odd},\, j \in \{1, \ldots, \ell_{i} \}$ & $\delta_\textrm{rel}(x_1 \boxtimes \lambda_{j}^{\theta_i})+1$ & $x_3 \boxtimes \kappa_{j}^{\theta_i}, i \in \{3, \ldots, k\} \mathrm{\, odd},\, j \in \{2, \ldots, \ell_{i-1}\}$ & $M(\tilde{\theta}_i) + A(\tilde{\theta}_i) - j + 2$ \\ \hline
    $x_3 \boxtimes \lambda_{j}^{\theta_i}, i \in \{1, \ldots, k-2\} \mathrm{\, odd},\, j \in \{1, \ldots, \ell_{i} -1\}$ & $M(\tilde{\theta}_i) -A(\tilde{\theta}_i) +j + 2g +1$ & $y_3 \boxtimes \kappa_{j}^{\theta_i}, i \in \{3, \ldots, k\} \mathrm{\, odd},\, j \in \{2, \ldots, \ell_{i-1}\}$ & $\delta_\textrm{rel}(x_3 \boxtimes \kappa_{j}^{\theta_i})$ \\ \hline
    $y_3 \boxtimes \lambda_{j}^{\theta_i}, i \in \{1, \ldots, k-2\} \mathrm{\, odd},\, j \in \{1, \ldots, \ell_{i} \}$ & $\delta_\textrm{rel}(x_3 \boxtimes \lambda_{j}^{\theta_i})$ & $x_5 \boxtimes \kappa_{j}^{\theta_i}, i \in \{3, \ldots, k\} \mathrm{\, odd},\, j \in \{1, \ldots, \ell_{i-1}\}$ & $\delta_\textrm{rel}(y_1 \boxtimes \kappa_{j}^{\theta_i})$ \\ \hline
    $x_5 \boxtimes \lambda_{j}^{\theta_i}, i \in \{1, \ldots, k-2\} \mathrm{\, odd},\, j \in \{1, \ldots, \ell_{i} \}$ & $\delta_\textrm{rel}(y_1 \boxtimes \lambda_{j}^{\theta_i})$ & $y_5 \boxtimes \kappa_{j}^{\theta_i}, i \in \{3, \ldots, k\} \mathrm{\, odd},\, j \in \{1, \ldots, \ell_{i-1}\}$ & $\delta_\textrm{rel}(x_5 \boxtimes \kappa_{j}^{\theta_i})$ \\ \hline
    $y_5 \boxtimes \lambda_{j}^{\theta_i}, i \in \{1, \ldots, k-2\} \mathrm{\, odd},\, j \in \{1, \ldots, \ell_{i} \}$ & $\delta_\textrm{rel}(x_5 \boxtimes \lambda_{j}^{\theta_i})$ & $x_6 \boxtimes \kappa_{j}^{\theta_i}, i \in \{3, \ldots, k\} \mathrm{\, odd},\, j \in \{1, \ldots, \ell_{i-1}\}$ & $\delta_\textrm{rel}(x_3 \boxtimes \kappa_{j}^{\theta_i}) -1$ \\ \hline
    $x_6 \boxtimes \lambda_{j}^{\theta_i}, i \in \{1, \ldots, k-2\} \mathrm{\, odd},\, j \in \{1, \ldots, \ell_{i} \}$ & $\delta_\textrm{rel}(x_3 \boxtimes \lambda_{j}^{\theta_i}) -1$ & $y_6 \boxtimes \kappa_{j}^{\theta_i}, i \in \{3, \ldots, k\} \mathrm{\, odd},\, j \in \{1, \ldots, \ell_{i-1}\}$ & $\delta_\textrm{rel}(x_6 \boxtimes \kappa_{j}^{\theta_i})$ \\ \hline
    $y_6 \boxtimes \lambda_{j}^{\theta_i}, i \in \{1, \ldots, k-2\} \mathrm{\, odd},\, j \in \{1, \ldots, \ell_{i} \}$ & $\delta_\textrm{rel}(x_6 \boxtimes \lambda_{j}^{\theta_i})$ & $x_2 \boxtimes \theta_i, i \in \{1, \ldots, k\} \mathrm{\, even}$ & $M(\tilde{\theta}_i) -A(\tilde{\theta}_i) + 2g+1$ \\ \hline
    
    $x_2 \boxtimes \omega_i, i \in \{1, \ldots, k\} \mathrm{\, even}$ & $M(\tilde{\omega}_i) -A(\tilde{\omega}_i) + 2g+1$  & $x_4 \boxtimes \theta_i, i \in \{1, \ldots, k\} \mathrm{\, even}$ & $M(\tilde{\theta}_i) + A(\tilde{\theta}_i) + 2$ \\ \hline
    $x_4 \boxtimes \omega_i, i \in \{1, \ldots, k\} \mathrm{\, even}$ & $M(\tilde{\omega}_i) +A(\tilde{\omega}_i) + 2$  & $y_4 \boxtimes \theta_i, i \in \{1, \ldots, k\} \mathrm{\, even}$ & $\delta_\textrm{rel}(x_4 \boxtimes \theta_i)$ \\ \hline
    $y_4 \boxtimes \omega_i, i \in \{1, \ldots, k\} \mathrm{\, even}$ & $\delta_\textrm{rel}(x_4 \boxtimes \omega_i)$  & $x_0 \boxtimes \theta_i, i \in \{1, \ldots, k\} \mathrm{\, odd}$ & $M(\tilde{\theta}_i) - 3A(\tilde{\theta}_i) $ \\ \hline
    
    $x_0 \boxtimes \omega_i, i \in \{1, \ldots, k\} \mathrm{\, odd}$ & $M(\tilde{\omega}_i) -3A(\tilde{\omega}_i) $  & $y_2 \boxtimes \theta_i, i \in \{1, \ldots, k\} \mathrm{\, odd}$ & $M(\tilde{\theta}_i) - A(\tilde{\theta}_i) + 2g +1$ \\ \hline
    $y_2 \boxtimes \omega_i, i \in \{1, \ldots, k\} \mathrm{\, odd}$ & $M(\tilde{\omega}_i) -A(\tilde{\omega}_i) + 2g+1$  & $x_2 \boxtimes \eta_{0}$ & $g+1$ \\ \hline

    $x_2 \boxtimes \omega_{k+1}$ & $M(\tilde{\omega}_{k+1}) -A(\tilde{\omega}_{k+1}) + 2g+1$  & $x_4 \boxtimes \eta_{0}$ & $-3g+2$ \\ \hline
    $x_4 \boxtimes \omega_{k+1}$ & $M(\tilde{\omega}_{k+1}) +A(\tilde{\omega}_{k+1}) + 2$  & $y_4 \boxtimes \eta_{0}$ & $-3g+2$ \\ \hline
    $y_4 \boxtimes \omega_{k+1}$ & $\delta_\textrm{rel}(x_4 \boxtimes \omega_{k+1})$  & $y_4 \boxtimes \xi_{0}$ & $g+2$ \\ \hline
   
  \end{tabular}
  }
\end{center}
\caption {The $\delta$-gradings of the generators of $\hfkhat(Q_n(K))$,  for the case when $K$ admits a positive L-space surgery, $k$ is odd, and $n=2g$.}
\label{tab:generalcasen=2g} 
\end{table}

\item[-] When $n < 2g$, the unstable chain in $\cfdhat(X_{K,n})$ takes the form
\begin{equation*}
\xi_0 \xrightarrow{\textrm{$D_1$}} \mu_1 \xleftarrow{\textrm{$D_{23}$}} \cdots \xleftarrow{\textrm{$D_{23}$}} \mu_{2g-n} \xleftarrow{\textrm{$D_{3}$}} \eta_0. 
\end{equation*}
Similar to the $n =2g$ case, we can compute the generators of $\hfkhat(Q_n(K))$, together with their relative $\delta$-gradings; see Table \ref{tab:generalcasenless2g}. One can verify that
\[ 
\min \, \{ \delta_{\mathrm{rel}}(u \boxtimes v) \mid u \boxtimes v \textrm{\, is a nonzero generator of \,} \hfkhat(Q_n(K)) \} = \begin{cases} 
      n-g+1 & \textrm{if \,} n \in (-\infty, -2g+1] \\
      -3g+2 & \textrm{if \,} n \in  [-2g+1, 2g)
   \end{cases}
\]
and
\[ 
\max \, \{ \delta_{\mathrm{rel}}(u \boxtimes v) \mid u \boxtimes v\textrm{\, is a nonzero generator of \,} \hfkhat(Q_n(K)) \} = \begin{cases} 
      g & \textrm{if \,} n \in (-\infty, g+ r_2]  \\
    -r_2 + n & \textrm{if \,} n \in  [g+ r_2, 2g).
   \end{cases}
\]
Since $-2g +1 <  g+ r_2$, we have that
\[ 
\mathrm{th}(Q_n(K)) = \begin{cases} 
      2g-n-1 & \textrm{if \,} n \in (-\infty, -2g] \\
      4g-2 & \textrm{if \,} n \in [-2g+1, g+ r_2] \\
      3g-r_2+ n-2 & \textrm{if \,} n \in [ g+ r_2 +1, 2g).
   \end{cases}
\]
This concludes the $n<2g$ case.

\begin{table}[h]
\begin{center}
\resizebox{\textwidth}{!}{
  \begin{tabular}{ | c | c || c | c | }
    \hline
    Generator & $\delta_{\mathrm{rel}}$ & Generator & $\delta_{\mathrm{rel}}$\\ 
    \hline
    \hline
    
    $y_1 \boxtimes \lambda_{1}^{\omega_1}$ & $-3g+2$ & $y_1 \boxtimes \kappa_{1}^{\theta_1}$ & $n - g + 1$ \\ \hline
    $x_5 \boxtimes \lambda_{1}^{\omega_1}$ & $-3g+2$ & $x_5 \boxtimes \kappa_{1}^{\theta_1}$ & $n - g + 1$ \\ \hline
    $y_5 \boxtimes \lambda_{1}^{\omega_1}$ & $-3g+2$ & $y_5 \boxtimes \kappa_{1}^{\theta_1}$ & $n - g + 1$ \\ \hline
    $x_6 \boxtimes \lambda_{1}^{\omega_1}$ & $n - g + 1$ & $x_6 \boxtimes \kappa_{1}^{\theta_1}$ & $-3g+2$ \\ \hline
    $y_6 \boxtimes \lambda_{1}^{\omega_1}$ & $n - g + 1$ & $y_6 \boxtimes \kappa_{1}^{\theta_1}$ & $-3g+2$ \\ \hline
    
    $x_1 \boxtimes \lambda_{j}^{\omega_i}, i \in \{3, \ldots, k\} \mathrm{\, odd}, \, j \in \{1, \ldots, \ell_{i-1} -1\}$ & $M(\tilde{\omega}_i) -3A(\tilde{\omega}_i) -j$ & $x_1 \boxtimes \kappa_{j}^{\omega_i}, i \in \{1, \ldots, k-2\} \mathrm{\, odd},\, j \in \{2, \ldots, \ell_{i}\}$ & $M(\tilde{\omega}_i) - A(\tilde{\omega}_i) + j + n -1$ \\ \hline
    $y_1 \boxtimes \lambda_{j}^{\omega_i}, i \in \{3, \ldots, k\} \mathrm{\, odd}, \, j \in \{1, \ldots, \ell_{i-1} \}$ & $\delta_\textrm{rel}(x_1 \boxtimes \lambda_{j}^{\omega_i})+1$ & $y_1 \boxtimes \kappa_{j}^{\omega_i}, i \in \{1, \ldots, k-2\} \mathrm{\, odd}, \, j \in \{1, \ldots, \ell_{i}\}$ & $\delta_\textrm{rel}(x_1 \boxtimes \kappa_{j}^{\omega_i}) +1$ \\ \hline
    $x_3 \boxtimes \lambda_{j}^{\omega_i}, i \in \{3, \ldots, k\} \mathrm{\, odd}, \, j \in \{1, \ldots, \ell_{i-1} -1\}$ & $M(\tilde{\omega}_i) -A(\tilde{\omega}_i) +j + n+1$ & $x_3 \boxtimes \kappa_{j}^{\omega_i}, i \in \{1, \ldots, k-2\} \mathrm{\, odd}, \, j \in \{2, \ldots, \ell_{i}\}$ & $M(\tilde{\omega}_i) + A(\tilde{\omega}_i) - j + 2$ \\ \hline
    $y_3 \boxtimes \lambda_{j}^{\omega_i}, i \in \{3, \ldots, k\} \mathrm{\, odd}, \, j \in \{1, \ldots, \ell_{i-1} \}$ & $\delta_\textrm{rel}(x_3 \boxtimes \lambda_{j}^{\omega_i})$ & $y_3 \boxtimes \kappa_{j}^{\omega_i}, i \in \{1, \ldots, k-2\} \mathrm{\, odd}, \, j \in \{2, \ldots, \ell_{i}\}$ & $\delta_\textrm{rel}(x_3 \boxtimes \kappa_{j}^{\omega_i})$ \\ \hline
    $x_5 \boxtimes \lambda_{j}^{\omega_i}, i \in \{3, \ldots, k\} \mathrm{\, odd}, \, j \in \{1, \ldots, \ell_{i-1} \}$ & $\delta_\textrm{rel}(y_1 \boxtimes \lambda_{j}^{\omega_i})$ & $x_5 \boxtimes \kappa_{j}^{\omega_i}, i \in \{1, \ldots, k-2\} \mathrm{\, odd}, \, j \in \{1, \ldots, \ell_{i}\}$ & $\delta_\textrm{rel}(y_1 \boxtimes \kappa_{j}^{\omega_i}) $ \\ \hline
    $y_5 \boxtimes \lambda_{j}^{\omega_i}, i \in \{3, \ldots, k\} \mathrm{\, odd}, \, j \in \{1, \ldots, \ell_{i-1} \}$ & $\delta_\textrm{rel}(x_5 \boxtimes \lambda_{j}^{\omega_i})$ & $y_5 \boxtimes \kappa_{j}^{\omega_i}, i \in \{1, \ldots, k-2\} \mathrm{\, odd}, \, j \in \{1, \ldots, \ell_{i}\}$ & $\delta_\textrm{rel}(x_5 \boxtimes \kappa_{j}^{\omega_i}) $ \\ \hline
    $x_6 \boxtimes \lambda_{j}^{\omega_i}, i \in \{3, \ldots, k\} \mathrm{\, odd}, \, j \in \{1, \ldots, \ell_{i-1} \}$ & $\delta_\textrm{rel}(x_3 \boxtimes \lambda_{j}^{\omega_i})-1$ & $x_6 \boxtimes \kappa_{j}^{\omega_i}, i \in \{1, \ldots, k-2\} \mathrm{\, odd}, \, j \in \{1, \ldots, \ell_{i}\}$ & $\delta_\textrm{rel}(x_3 \boxtimes \kappa_{j}^{\omega_i}) -1$ \\ \hline
    $y_6 \boxtimes \lambda_{j}^{\omega_i}, i \in \{3, \ldots, k\} \mathrm{\, odd}, \, j \in \{1, \ldots, \ell_{i-1} \}$ & $\delta_\textrm{rel}(x_6 \boxtimes \lambda_{j}^{\omega_i})$ & $y_6 \boxtimes \kappa_{j}^{\omega_i}, i \in \{1, \ldots, k-2\} \mathrm{\, odd}, \, j \in \{1, \ldots, \ell_{i}\}$ & $\delta_\textrm{rel}(x_6 \boxtimes \kappa_{j}^{\omega_i})$ \\ \hline
    
    $x_1 \boxtimes \lambda_{j}^{\theta_k}, j \in \{1, \ldots, r_k -1\}$ & $M(\tilde{\theta}_k) -3A(\tilde{\theta}_k) -j$ & $x_1 \boxtimes \kappa_{j}^{\omega_k}, j \in \{2, \ldots, r_k \}$ & $M(\tilde{\omega}_k) - A(\tilde{\omega}_k) + j + n -1$ \\ \hline
    $y_1 \boxtimes \lambda_{j}^{\theta_k}, j \in \{1, \ldots, r_k \}$ & $\delta_\textrm{rel}(x_1 \boxtimes \lambda_{j}^{\theta_k})+1$ & $y_1 \boxtimes \kappa_{j}^{\omega_k}, j \in \{1, \ldots, r_k \}$ & $\delta_\textrm{rel}(x_1 \boxtimes \kappa_{j}^{\omega_k})+1$ \\ \hline
    $x_3 \boxtimes \lambda_{j}^{\theta_k}, j \in \{1, \ldots, r_k -1\}$ & $M(\tilde{\theta}_k) - A(\tilde{\theta}_k) + j +n +1$ & $x_3 \boxtimes \kappa_{j}^{\omega_k}, j \in \{2, \ldots, r_k \}$ & $M(\tilde{\omega}_k) + A(\tilde{\omega}_k) - j + 2$ \\ \hline
    $y_3 \boxtimes \lambda_{j}^{\theta_k}, j \in \{1, \ldots, r_k \}$ & $\delta_\textrm{rel}(x_3 \boxtimes \lambda_{j}^{\theta_k})$ & $y_3 \boxtimes \kappa_{j}^{\omega_k}, j \in \{2, \ldots, r_k \}$ & $\delta_\textrm{rel}(x_3 \boxtimes \kappa_{j}^{\omega_k})$ \\ \hline
    $x_5 \boxtimes \lambda_{j}^{\theta_k}, j \in \{1, \ldots, r_k \}$ & $\delta_\textrm{rel}(y_1 \boxtimes \lambda_{j}^{\theta_k})$ & $x_5 \boxtimes \kappa_{j}^{\omega_k}, j \in \{1, \ldots, r_k \}$ & $\delta_\textrm{rel}(y_1 \boxtimes \kappa_{j}^{\omega_k})$ \\ \hline
    $y_5 \boxtimes \lambda_{j}^{\theta_k}, j \in \{1, \ldots, r_k \}$ & $\delta_\textrm{rel}(x_5 \boxtimes \lambda_{j}^{\theta_k})$ & $y_5 \boxtimes \kappa_{j}^{\omega_k}, j \in \{1, \ldots, r_k \}$ & $\delta_\textrm{rel}(x_5 \boxtimes \kappa_{j}^{\omega_k})$ \\ \hline
    $x_6 \boxtimes \lambda_{j}^{\theta_k}, j \in \{1, \ldots, r_k \}$ & $\delta_\textrm{rel}(x_3 \boxtimes \lambda_{j}^{\theta_k})-1$ & $x_6 \boxtimes \kappa_{j}^{\omega_k}, j \in \{1, \ldots, r_k \}$ & $\delta_\textrm{rel}(x_3 \boxtimes \kappa_{j}^{\omega_k})-1$ \\ \hline
    $y_6 \boxtimes \lambda_{j}^{\theta_k}, j \in \{1, \ldots, r_k \}$ & $\delta_\textrm{rel}(x_6 \boxtimes \lambda_{j}^{\theta_k})$ & $y_6 \boxtimes \kappa_{j}^{\omega_k}, j \in \{1, \ldots, r_k \}$ & $\delta_\textrm{rel}(x_6 \boxtimes \kappa_{j}^{\omega_k})$ \\ \hline
    
    $x_1 \boxtimes \lambda_{j}^{\theta_i}, i \in \{1, \ldots, k-2\} \mathrm{\, odd},\, j \in \{1, \ldots, \ell_{i} -1\}$ & $M(\tilde{\theta}_i) -3A(\tilde{\theta}_i) -j$ & $x_1 \boxtimes \kappa_{j}^{\theta_i}, i \in \{3, \ldots, k\} \mathrm{\, odd},\, j \in \{2, \ldots, \ell_{i-1}\}$ & $M(\tilde{\theta}_i) -A(\tilde{\theta}_i) + j +n -1$ \\ \hline
    $y_1 \boxtimes \lambda_{j}^{\theta_i}, i \in \{1, \ldots, k-2\} \mathrm{\, odd},\, j \in \{1, \ldots, \ell_{i} \}$ & $\delta_\textrm{rel}(x_1 \boxtimes \lambda_{j}^{\theta_i})+1$ & $y_1 \boxtimes \kappa_{j}^{\theta_i}, i \in \{3, \ldots, k\} \mathrm{\, odd},\, j \in \{1, \ldots, \ell_{i-1}\}$ & $\delta_\textrm{rel}(x_1 \boxtimes \kappa_{j}^{\theta_i}) + 1$ \\ \hline
    $x_3 \boxtimes \lambda_{j}^{\theta_i}, i \in \{1, \ldots, k-2\} \mathrm{\, odd},\, j \in \{1, \ldots, \ell_{i} -1\}$ & $M(\tilde{\theta}_i) -A(\tilde{\theta}_i) +j + n +1$ & $x_3 \boxtimes \kappa_{j}^{\theta_i}, i \in \{3, \ldots, k\} \mathrm{\, odd},\, j \in \{2, \ldots, \ell_{i-1}\}$ & $M(\tilde{\theta}_i) + A(\tilde{\theta}_i) - j + 2$ \\ \hline
    $y_3 \boxtimes \lambda_{j}^{\theta_i}, i \in \{1, \ldots, k-2\} \mathrm{\, odd},\, j \in \{1, \ldots, \ell_{i} \}$ & $\delta_\textrm{rel}(x_3 \boxtimes \lambda_{j}^{\theta_i})$ & $y_3 \boxtimes \kappa_{j}^{\theta_i}, i \in \{3, \ldots, k\} \mathrm{\, odd},\, j \in \{2, \ldots, \ell_{i-1}\}$ & $\delta_\textrm{rel}(x_3 \boxtimes \kappa_{j}^{\theta_i})$ \\ \hline
    $x_5 \boxtimes \lambda_{j}^{\theta_i}, i \in \{1, \ldots, k-2\} \mathrm{\, odd},\, j \in \{1, \ldots, \ell_{i} \}$ & $\delta_\textrm{rel}(y_1 \boxtimes \lambda_{j}^{\theta_i})$ & $x_5 \boxtimes \kappa_{j}^{\theta_i}, i \in \{3, \ldots, k\} \mathrm{\, odd},\, j \in \{1, \ldots, \ell_{i-1}\}$ & $\delta_\textrm{rel}(y_1 \boxtimes \kappa_{j}^{\theta_i})$ \\ \hline
    $y_5 \boxtimes \lambda_{j}^{\theta_i}, i \in \{1, \ldots, k-2\} \mathrm{\, odd},\, j \in \{1, \ldots, \ell_{i} \}$ & $\delta_\textrm{rel}(x_5 \boxtimes \lambda_{j}^{\theta_i})$ & $y_5 \boxtimes \kappa_{j}^{\theta_i}, i \in \{3, \ldots, k\} \mathrm{\, odd},\, j \in \{1, \ldots, \ell_{i-1}\}$ & $\delta_\textrm{rel}(x_5 \boxtimes \kappa_{j}^{\theta_i})$ \\ \hline
    $x_6 \boxtimes \lambda_{j}^{\theta_i}, i \in \{1, \ldots, k-2\} \mathrm{\, odd},\, j \in \{1, \ldots, \ell_{i} \}$ & $\delta_\textrm{rel}(x_3 \boxtimes \lambda_{j}^{\theta_i}) -1$ & $x_6 \boxtimes \kappa_{j}^{\theta_i}, i \in \{3, \ldots, k\} \mathrm{\, odd},\, j \in \{1, \ldots, \ell_{i-1}\}$ & $\delta_\textrm{rel}(x_3 \boxtimes \kappa_{j}^{\theta_i}) -1$ \\ \hline
    $y_6 \boxtimes \lambda_{j}^{\theta_i}, i \in \{1, \ldots, k-2\} \mathrm{\, odd},\, j \in \{1, \ldots, \ell_{i} \}$ & $\delta_\textrm{rel}(x_6 \boxtimes \lambda_{j}^{\theta_i})$ & $y_6 \boxtimes \kappa_{j}^{\theta_i}, i \in \{3, \ldots, k\} \mathrm{\, odd},\, j \in \{1, \ldots, \ell_{i-1}\}$ & $\delta_\textrm{rel}(x_6 \boxtimes \kappa_{j}^{\theta_i})$ \\ \hline

    $x_2 \boxtimes \omega_i, i \in \{1, \ldots, k\} \mathrm{\, even}$ & $M(\tilde{\omega}_i) -A(\tilde{\omega}_i) + n+1$ & $x_2 \boxtimes \theta_i, i \in \{1, \ldots, k\} \mathrm{\, even}$ & $M(\tilde{\theta}_i) -A(\tilde{\theta}_i) + n+1$ \\ \hline
    $x_4 \boxtimes \omega_i, i \in \{1, \ldots, k\} \mathrm{\, even}$ & $M(\tilde{\omega}_i) +A(\tilde{\omega}_i) + 2$ & $x_4 \boxtimes \theta_i, i \in \{1, \ldots, k\} \mathrm{\, even}$ & $M(\tilde{\theta}_i) + A(\tilde{\theta}_i) + 2$ \\ \hline
    $y_4 \boxtimes \omega_i, i \in \{1, \ldots, k\} \mathrm{\, even}$ & $\delta_\textrm{rel}(x_4 \boxtimes \omega_i)$  & $y_4 \boxtimes \theta_i, i \in \{1, \ldots, k\} \mathrm{\, even}$ & $\delta_\textrm{rel}(x_4 \boxtimes \theta_i)$ \\ \hline
    $x_0 \boxtimes \omega_i, i \in \{1, \ldots, k\} \mathrm{\, odd}$ & $M(\tilde{\omega}_i) -3A(\tilde{\omega}_i) $ & $x_0 \boxtimes \theta_i, i \in \{1, \ldots, k\} \mathrm{\, odd}$ & $M(\tilde{\theta}_i) - 3A(\tilde{\theta}_i)$ \\ \hline
    $y_2 \boxtimes \omega_i, i \in \{1, \ldots, k\} \mathrm{\, odd}$ & $M(\tilde{\omega}_i) -A(\tilde{\omega}_i) + n+1$ & $y_2 \boxtimes \theta_i, i \in \{1, \ldots, k\} \mathrm{\, odd}$ & $M(\tilde{\theta}_i) - A(\tilde{\theta}_i) + n +1$ \\ \hline
    
    $x_2 \boxtimes \omega_{k+1}$ & $M(\tilde{\omega}_{k+1}) -A(\tilde{\omega}_{k+1}) + n+1$  & $x_2 \boxtimes \eta_{0}$ & $n-g+1$ \\ \hline
    $x_4 \boxtimes \omega_{k+1}$ & $M(\tilde{\omega}_{k+1}) +A(\tilde{\omega}_{k+1}) + 2$ & $y_2 \boxtimes \eta_{0}$ & $n-g+1$ \\ \hline
    $y_4 \boxtimes \omega_{k+1}$ & $\delta_\textrm{rel}(x_4 \boxtimes \omega_{k+1})$ & $x_4 \boxtimes \eta_{0}$ & $-3g+2$ \\ \hline
    $x_0 \boxtimes \eta_{0}$ & $g$ & $y_4 \boxtimes \eta_{0}$ & $-3g+2$ \\ \hline

    $x_1 \boxtimes \mu_{i}, i \in \{2, \ldots, 2g-n \}$ & $n - g + i -1$ & $y_1 \boxtimes \mu_{i}, i \in \{2, \ldots, 2g-n \}$ & $n - g + i$ \\ \hline
    $x_3 \boxtimes \mu_{i}, i \in \{2, \ldots, 2g-n \}$ & $g - i + 2$ & $y_3 \boxtimes \mu_{i}, i \in \{2, \ldots, 2g-n \}$ & $g - i + 2$ \\ \hline
    $x_5 \boxtimes \mu_{i}, i \in \{1, \ldots, 2g-n \}$ & $n - g + i $ & $y_5 \boxtimes \mu_{i}, i \in \{1, \ldots, 2g-n \}$ & $n - g + i $ \\ \hline
    $x_6 \boxtimes \mu_{i}, i \in \{1, \ldots, 2g-n \}$ & $g - i + 1$ & $y_6 \boxtimes \mu_{i}, i \in \{1, \ldots, 2g-n \}$ & $g - i + 1$ \\ \hline
   
  \end{tabular}
  }
\end{center}
\caption {The $\delta$-gradings of the generators of $\hfkhat(Q_n(K))$, for the casewhen $K$ admits a positive L-space surgery, $k$ is odd, and $n<2g$.}
\label{tab:generalcasenless2g} 
\end{table}

\item[-] When $n > 2g$, the unstable chain in $\cfdhat(X_{K,n})$ takes the form
\begin{equation*}
\xi_0 \xrightarrow{\textrm{$D_{123}$}} \mu_1 \xrightarrow{\textrm{$D_{23}$}} \cdots \xrightarrow{\textrm{$D_{23}$}} \mu_{n-2g} \xrightarrow{\textrm{$D_{2}$}} \eta_0 
\end{equation*}
As in the $n = 2g$ case, we compute the generators of $\hfkhat(Q_n(K))$, together with their relative $\delta$-gradings; see Table \ref{tab:generalcasengreater2g}. One can verify that
\[ 
\min \, \{ \delta_{\mathrm{rel}}(u \boxtimes v) \mid u \boxtimes v \textrm{\, is a nonzero generator of \,} \hfkhat(Q_n(K)) \} = -3g+2
\]
and
\[ 
\max \, \{ \delta_{\mathrm{rel}}(u \boxtimes v) \mid u \boxtimes v \textrm{\, is a nonzero generator of \,} \hfkhat(Q_n(K)) \} = -r_2 + n.
\]
Then $\mathrm{th}(Q_n(K)) = -r_2 + n + 3g- 2$, and this concludes the proof of the $n>2g$ case, and the proof of Proposition \ref{prop:lspacethickness} in the case when $K$ admits a positive L-space surgery and $k$ is odd.

\begin{table}[h]
\begin{center}
\resizebox{\textwidth}{!}{
  \begin{tabular}{ | c | c || c | c | }
    \hline
    Generator & $\delta_{\mathrm{rel}}$ & Generator & $\delta_{\mathrm{rel}}$\\ 
    \hline
    \hline
    
    $y_1 \boxtimes \lambda_{1}^{\omega_1}$ & $-3g+2$ & $y_1 \boxtimes \kappa_{1}^{\theta_1}$ & $n - g + 1$ \\ \hline
    $y_3 \boxtimes \lambda_{1}^{\omega_1}$ & $n-g+2$ & $x_5 \boxtimes \kappa_{1}^{\theta_1}$ & $n - g + 1$ \\ \hline
    $x_5 \boxtimes \lambda_{1}^{\omega_1}$ & $-3g+2$ & $y_5 \boxtimes \kappa_{1}^{\theta_1}$ & $n - g + 1$ \\ \hline
    $y_5 \boxtimes \lambda_{1}^{\omega_1}$ & $-3g+2$ & $x_6 \boxtimes \kappa_{1}^{\theta_1}$ & $-3g+2$ \\ \hline
    $x_6 \boxtimes \lambda_{1}^{\omega_1}$ & $n - g + 1$ & $y_6 \boxtimes \kappa_{1}^{\theta_1}$ & $-3g+2$ \\ \hline
    $y_6 \boxtimes \lambda_{1}^{\omega_1}$ & $n - g + 1$  & $x_1 \boxtimes \kappa_{j}^{\omega_i}, i \in \{1, \ldots, k-2\} \mathrm{\, odd},\, j \in \{2, \ldots, \ell_{i}\}$ & $M(\tilde{\omega}_i) - A(\tilde{\omega}_i) + j + n -1$ \\ \hline
    
    $x_1 \boxtimes \lambda_{j}^{\omega_i}, i \in \{3, \ldots, k\} \mathrm{\, odd}, \, j \in \{1, \ldots, \ell_{i-1} -1\}$ & $M(\tilde{\omega}_i) -3A(\tilde{\omega}_i) -j$ & $y_1 \boxtimes \kappa_{j}^{\omega_i}, i \in \{1, \ldots, k-2\} \mathrm{\, odd}, \, j \in \{1, \ldots, \ell_{i}\}$ & $\delta_\textrm{rel}(x_1 \boxtimes \kappa_{j}^{\omega_i}) +1$ \\ \hline
    $y_1 \boxtimes \lambda_{j}^{\omega_i}, i \in \{3, \ldots, k\} \mathrm{\, odd}, \, j \in \{1, \ldots, \ell_{i-1} \}$ & $\delta_\textrm{rel}(x_1 \boxtimes \lambda_{j}^{\omega_i})+1$ & $x_3 \boxtimes \kappa_{j}^{\omega_i}, i \in \{1, \ldots, k-2\} \mathrm{\, odd}, \, j \in \{2, \ldots, \ell_{i}\}$ & $M(\tilde{\omega}_i) + A(\tilde{\omega}_i) - j + 2$ \\ \hline
    $x_3 \boxtimes \lambda_{j}^{\omega_i}, i \in \{3, \ldots, k\} \mathrm{\, odd}, \, j \in \{1, \ldots, \ell_{i-1} -1\}$ & $M(\tilde{\omega}_i) -A(\tilde{\omega}_i) +j + n+1$ & $y_3 \boxtimes \kappa_{j}^{\omega_i}, i \in \{1, \ldots, k-2\} \mathrm{\, odd}, \, j \in \{2, \ldots, \ell_{i}\}$ & $\delta_\textrm{rel}(x_3 \boxtimes \kappa_{j}^{\omega_i})$ \\ \hline
    $y_3 \boxtimes \lambda_{j}^{\omega_i}, i \in \{3, \ldots, k\} \mathrm{\, odd}, \, j \in \{1, \ldots, \ell_{i-1} \}$ & $\delta_\textrm{rel}(x_3 \boxtimes \lambda_{j}^{\omega_i})$ & $x_5 \boxtimes \kappa_{j}^{\omega_i}, i \in \{1, \ldots, k-2\} \mathrm{\, odd}, \, j \in \{1, \ldots, \ell_{i}\}$ & $\delta_\textrm{rel}(y_1 \boxtimes \kappa_{j}^{\omega_i}) $ \\ \hline
    $x_5 \boxtimes \lambda_{j}^{\omega_i}, i \in \{3, \ldots, k\} \mathrm{\, odd}, \, j \in \{1, \ldots, \ell_{i-1} \}$ & $\delta_\textrm{rel}(y_1 \boxtimes \lambda_{j}^{\omega_i})$ & $y_5 \boxtimes \kappa_{j}^{\omega_i}, i \in \{1, \ldots, k-2\} \mathrm{\, odd}, \, j \in \{1, \ldots, \ell_{i}\}$ & $\delta_\textrm{rel}(x_5 \boxtimes \kappa_{j}^{\omega_i}) $ \\ \hline
    $y_5 \boxtimes \lambda_{j}^{\omega_i}, i \in \{3, \ldots, k\} \mathrm{\, odd}, \, j \in \{1, \ldots, \ell_{i-1} \}$ & $\delta_\textrm{rel}(x_5 \boxtimes \lambda_{j}^{\omega_i})$ & $x_6 \boxtimes \kappa_{j}^{\omega_i}, i \in \{1, \ldots, k-2\} \mathrm{\, odd}, \, j \in \{1, \ldots, \ell_{i}\}$ & $\delta_\textrm{rel}(x_3 \boxtimes \kappa_{j}^{\omega_i}) -1$ \\ \hline
    $x_6 \boxtimes \lambda_{j}^{\omega_i}, i \in \{3, \ldots, k\} \mathrm{\, odd}, \, j \in \{1, \ldots, \ell_{i-1} \}$ & $\delta_\textrm{rel}(x_3 \boxtimes \lambda_{j}^{\omega_i})-1$ & $y_6 \boxtimes \kappa_{j}^{\omega_i}, i \in \{1, \ldots, k-2\} \mathrm{\, odd}, \, j \in \{1, \ldots, \ell_{i}\}$ & $\delta_\textrm{rel}(x_6 \boxtimes \kappa_{j}^{\omega_i})$ \\ \hline
    $y_6 \boxtimes \lambda_{j}^{\omega_i}, i \in \{3, \ldots, k\} \mathrm{\, odd}, \, j \in \{1, \ldots, \ell_{i-1} \}$ & $\delta_\textrm{rel}(x_6 \boxtimes \lambda_{j}^{\omega_i})$ & $x_1 \boxtimes \kappa_{j}^{\omega_k}, j \in \{2, \ldots, r_k \}$ & $M(\tilde{\omega}_k) - A(\tilde{\omega}_k) + j + n -1$ \\ \hline
    
    $x_1 \boxtimes \lambda_{j}^{\theta_k}, j \in \{1, \ldots, r_k -1\}$ & $M(\tilde{\theta}_k) -3A(\tilde{\theta}_k) -j$ & $y_1 \boxtimes \kappa_{j}^{\omega_k}, j \in \{1, \ldots, r_k \}$ & $\delta_\textrm{rel}(x_1 \boxtimes \kappa_{j}^{\omega_k})+1$ \\ \hline
    $y_1 \boxtimes \lambda_{j}^{\theta_k}, j \in \{1, \ldots, r_k \}$ & $\delta_\textrm{rel}(x_1 \boxtimes \lambda_{j}^{\theta_k})+1$ & $x_3 \boxtimes \kappa_{j}^{\omega_k}, j \in \{2, \ldots, r_k \}$ & $M(\tilde{\omega}_k) + A(\tilde{\omega}_k) - j + 2$ \\ \hline
    $x_3 \boxtimes \lambda_{j}^{\theta_k}, j \in \{1, \ldots, r_k -1\}$ & $M(\tilde{\theta}_k) - A(\tilde{\theta}_k) + j +n +1$ & $y_3 \boxtimes \kappa_{j}^{\omega_k}, j \in \{2, \ldots, r_k \}$ & $\delta_\textrm{rel}(x_3 \boxtimes \kappa_{j}^{\omega_k})$ \\ \hline
    $y_3 \boxtimes \lambda_{j}^{\theta_k}, j \in \{1, \ldots, r_k \}$ & $\delta_\textrm{rel}(x_3 \boxtimes \lambda_{j}^{\theta_k})$ & $x_5 \boxtimes \kappa_{j}^{\omega_k}, j \in \{1, \ldots, r_k \}$ & $\delta_\textrm{rel}(y_1 \boxtimes \kappa_{j}^{\omega_k})$ \\ \hline
    $x_5 \boxtimes \lambda_{j}^{\theta_k}, j \in \{1, \ldots, r_k \}$ & $\delta_\textrm{rel}(y_1 \boxtimes \lambda_{j}^{\theta_k})$ & $y_5 \boxtimes \kappa_{j}^{\omega_k}, j \in \{1, \ldots, r_k \}$ & $\delta_\textrm{rel}(x_5 \boxtimes \kappa_{j}^{\omega_k})$ \\ \hline
    $y_5 \boxtimes \lambda_{j}^{\theta_k}, j \in \{1, \ldots, r_k \}$ & $\delta_\textrm{rel}(x_5 \boxtimes \lambda_{j}^{\theta_k})$ & $x_6 \boxtimes \kappa_{j}^{\omega_k}, j \in \{1, \ldots, r_k \}$ & $\delta_\textrm{rel}(x_3 \boxtimes \kappa_{j}^{\omega_k})-1$ \\ \hline
    $x_6 \boxtimes \lambda_{j}^{\theta_k}, j \in \{1, \ldots, r_k \}$ & $\delta_\textrm{rel}(x_3 \boxtimes \lambda_{j}^{\theta_k})-1$ & $y_6 \boxtimes \kappa_{j}^{\omega_k}, j \in \{1, \ldots, r_k \}$ & $\delta_\textrm{rel}(x_6 \boxtimes \kappa_{j}^{\omega_k})$ \\ \hline
    $y_6 \boxtimes \lambda_{j}^{\theta_k}, j \in \{1, \ldots, r_k \}$ & $\delta_\textrm{rel}(x_6 \boxtimes \lambda_{j}^{\theta_k})$ & $x_1 \boxtimes \kappa_{j}^{\theta_i}, i \in \{3, \ldots, k\} \mathrm{\, odd},\, j \in \{2, \ldots, \ell_{i-1}\}$ & $M(\tilde{\theta}_i) -A(\tilde{\theta}_i) + j +n -1$ \\ \hline
    
    $x_1 \boxtimes \lambda_{j}^{\theta_i}, i \in \{1, \ldots, k-2\} \mathrm{\, odd},\, j \in \{1, \ldots, \ell_{i} -1\}$ & $M(\tilde{\theta}_i) -3A(\tilde{\theta}_i) -j$ & $y_1 \boxtimes \kappa_{j}^{\theta_i}, i \in \{3, \ldots, k\} \mathrm{\, odd},\, j \in \{1, \ldots, \ell_{i-1}\}$ & $\delta_\textrm{rel}(x_1 \boxtimes \kappa_{j}^{\theta_i}) + 1$ \\ \hline
    $y_1 \boxtimes \lambda_{j}^{\theta_i}, i \in \{1, \ldots, k-2\} \mathrm{\, odd},\, j \in \{1, \ldots, \ell_{i} \}$ & $\delta_\textrm{rel}(x_1 \boxtimes \lambda_{j}^{\theta_i})+1$ & $x_3 \boxtimes \kappa_{j}^{\theta_i}, i \in \{3, \ldots, k\} \mathrm{\, odd},\, j \in \{2, \ldots, \ell_{i-1}\}$ & $M(\tilde{\theta}_i) + A(\tilde{\theta}_i) - j + 2$ \\ \hline
    $x_3 \boxtimes \lambda_{j}^{\theta_i}, i \in \{1, \ldots, k-2\} \mathrm{\, odd},\, j \in \{1, \ldots, \ell_{i} -1\}$ & $M(\tilde{\theta}_i) -A(\tilde{\theta}_i) +j + n +1$ & $y_3 \boxtimes \kappa_{j}^{\theta_i}, i \in \{3, \ldots, k\} \mathrm{\, odd},\, j \in \{2, \ldots, \ell_{i-1}\}$ & $\delta_\textrm{rel}(x_3 \boxtimes \kappa_{j}^{\theta_i})$ \\ \hline
    $y_3 \boxtimes \lambda_{j}^{\theta_i}, i \in \{1, \ldots, k-2\} \mathrm{\, odd},\, j \in \{1, \ldots, \ell_{i} \}$ & $\delta_\textrm{rel}(x_3 \boxtimes \lambda_{j}^{\theta_i})$ & $x_5 \boxtimes \kappa_{j}^{\theta_i}, i \in \{3, \ldots, k\} \mathrm{\, odd},\, j \in \{1, \ldots, \ell_{i-1}\}$ & $\delta_\textrm{rel}(y_1 \boxtimes \kappa_{j}^{\theta_i})$ \\ \hline
    $x_5 \boxtimes \lambda_{j}^{\theta_i}, i \in \{1, \ldots, k-2\} \mathrm{\, odd},\, j \in \{1, \ldots, \ell_{i} \}$ & $\delta_\textrm{rel}(y_1 \boxtimes \lambda_{j}^{\theta_i})$ & $y_5 \boxtimes \kappa_{j}^{\theta_i}, i \in \{3, \ldots, k\} \mathrm{\, odd},\, j \in \{1, \ldots, \ell_{i-1}\}$ & $\delta_\textrm{rel}(x_5 \boxtimes \kappa_{j}^{\theta_i})$ \\ \hline
    $y_5 \boxtimes \lambda_{j}^{\theta_i}, i \in \{1, \ldots, k-2\} \mathrm{\, odd},\, j \in \{1, \ldots, \ell_{i} \}$ & $\delta_\textrm{rel}(x_5 \boxtimes \lambda_{j}^{\theta_i})$ & $x_6 \boxtimes \kappa_{j}^{\theta_i}, i \in \{3, \ldots, k\} \mathrm{\, odd},\, j \in \{1, \ldots, \ell_{i-1}\}$ & $\delta_\textrm{rel}(x_3 \boxtimes \kappa_{j}^{\theta_i}) -1$ \\ \hline
    $x_6 \boxtimes \lambda_{j}^{\theta_i}, i \in \{1, \ldots, k-2\} \mathrm{\, odd},\, j \in \{1, \ldots, \ell_{i} \}$ & $\delta_\textrm{rel}(x_3 \boxtimes \lambda_{j}^{\theta_i}) -1$ & $y_6 \boxtimes \kappa_{j}^{\theta_i}, i \in \{3, \ldots, k\} \mathrm{\, odd},\, j \in \{1, \ldots, \ell_{i-1}\}$ & $\delta_\textrm{rel}(x_6 \boxtimes \kappa_{j}^{\theta_i})$ \\ \hline
    $y_6 \boxtimes \lambda_{j}^{\theta_i}, i \in \{1, \ldots, k-2\} \mathrm{\, odd},\, j \in \{1, \ldots, \ell_{i} \}$ & $\delta_\textrm{rel}(x_6 \boxtimes \lambda_{j}^{\theta_i})$ & $x_2 \boxtimes \theta_i, i \in \{1, \ldots, k\} \mathrm{\, even}$ & $M(\tilde{\theta}_i) -A(\tilde{\theta}_i) + n+1$ \\ \hline

    $x_2 \boxtimes \omega_i, i \in \{1, \ldots, k\} \mathrm{\, even}$ & $M(\tilde{\omega}_i) -A(\tilde{\omega}_i) + n+1$ & $x_4 \boxtimes \theta_i, i \in \{1, \ldots, k\} \mathrm{\, even}$ & $M(\tilde{\theta}_i) + A(\tilde{\theta}_i) + 2$ \\ \hline
    $x_4 \boxtimes \omega_i, i \in \{1, \ldots, k\} \mathrm{\, even}$ & $M(\tilde{\omega}_i) +A(\tilde{\omega}_i) + 2$ & $y_4 \boxtimes \theta_i, i \in \{1, \ldots, k\} \mathrm{\, even}$ & $\delta_\textrm{rel}(x_4 \boxtimes \theta_i)$ \\ \hline
    $y_4 \boxtimes \omega_i, i \in \{1, \ldots, k\} \mathrm{\, even}$ & $\delta_\textrm{rel}(x_4 \boxtimes \omega_i)$  & $x_0 \boxtimes \theta_i, i \in \{1, \ldots, k\} \mathrm{\, odd}$ & $M(\tilde{\theta}_i) - 3A(\tilde{\theta}_i)$ \\ \hline
    $x_0 \boxtimes \omega_i, i \in \{1, \ldots, k\} \mathrm{\, odd}$ & $M(\tilde{\omega}_i) -3A(\tilde{\omega}_i) $ & $y_2 \boxtimes \theta_i, i \in \{1, \ldots, k\} \mathrm{\, odd}$ & $M(\tilde{\theta}_i) - A(\tilde{\theta}_i) + n +1$ \\ \hline
    $y_2 \boxtimes \omega_i, i \in \{1, \ldots, k\} \mathrm{\, odd}$ & $M(\tilde{\omega}_i) -A(\tilde{\omega}_i) + n+1$ & $x_2 \boxtimes \eta_{0}$ & $n-g+1$ \\ \hline
    
    $x_2 \boxtimes \omega_{k+1}$ & $M(\tilde{\omega}_{k+1}) -A(\tilde{\omega}_{k+1}) + n+1$  & $x_4 \boxtimes \eta_{0}$ & $-3g+2$ \\ \hline
    $x_4 \boxtimes \omega_{k+1}$ & $M(\tilde{\omega}_{k+1}) +A(\tilde{\omega}_{k+1}) + 2$ & $y_4 \boxtimes \eta_{0}$ & $-3g+2$ \\ \hline
    $y_4 \boxtimes \omega_{k+1}$ & $\delta_\textrm{rel}(x_4 \boxtimes \omega_{k+1})$ & $x_4 \boxtimes \xi_{0}$ & $g+2$ \\ \hline
    $x_2 \boxtimes \xi_{0}$ & $n-g+1$ & $y_4 \boxtimes \xi_{0}$ & $g+2$ \\ \hline
     
    $x_1 \boxtimes \mu_{i}, i \in \{1, \ldots, n-2g-1 \}$ & $n - g - i +1$ & $y_1 \boxtimes \mu_{i}, i \in \{1, \ldots, n-2g \}$ & $n - g - i + 2$ \\ \hline
    $x_3 \boxtimes \mu_{i}, i \in \{1, \ldots, n-2g-1 \}$ & $g + i + 2$ & $y_3 \boxtimes \mu_{i}, i \in \{1, \ldots, n-2g \}$ & $g + i + 2$ \\ \hline
    $x_5 \boxtimes \mu_{i}, i \in \{1, \ldots, n-2g \}$ & $n - g - i +2 $ & $y_5 \boxtimes \mu_{i}, i \in \{1, \ldots, n-2g \}$ & $n - g - i +2 $ \\ \hline
    $x_6 \boxtimes \mu_{i}, i \in \{1, \ldots, n-2g \}$ & $g + i + 1$ & $y_6 \boxtimes \mu_{i}, i \in \{1, \ldots, n-2g \}$ & $g + i + 1$ \\ \hline
   
  \end{tabular}
  }
\end{center}
\caption {The $\delta$-gradings of the generators of $\hfkhat(Q_n(K))$, for the case when $K$ admits a positive L-space surgery, $k$ is odd, and $n>2g$.}
\label{tab:generalcasengreater2g} 
\end{table}

\end{itemize}

When $K$ admits a positive L-space surgery and $k$ is even, $\cfkm(K)$ is given by the right staircase in Figure \ref{fig:cfkmlspace}. The Alexander and Maslov gradings of the basis elements $\widetilde{\xi}_0$, $\widetilde{\omega}_i$, $\widetilde{\theta}_i$, and $\widetilde{\eta}_0$ are given by Table \ref{tab:amlspacekeven}. Note that with the exception of the Maslov grading of $\widetilde{\omega}_{k+1}$, these gradings agree with the gradings in Table \ref{tab:amlspace} for the basis elements $\widetilde{\xi}_0$, $\widetilde{\omega}_i$, $\widetilde{\theta}_i$, and $\widetilde{\eta}_0$ in the $k$ odd case. One can verify that the thickness values of $Q_n(K)$ in this case are given by the formula in the $k$ odd case. This concludes the proof of Proposition  \ref{prop:lspacethickness} in the case when $K$ admits a positive L-space surgery.

\begin{table}[h]
\begin{center}
  \begin{tabular}{ | c | c | c | }
    \hline
    Basis element & $A$ & $M$  \\
    \hline
    \hline
    $\widetilde{\xi}_0$ & $r_0=g$ & 0  \\ \hline
    $\widetilde{\omega}_i, i \in \{1, \ldots, k\} \, \mathrm{odd}$ & $r_i$ & $-1 - 2\sum\limits_{j=1}^{\frac{i-1}{2}} \ell_{2j}$  \\ \hline
    $\widetilde{\omega}_i, i \in \{1, \ldots, k\} \, \mathrm{even}$ & $r_i$ & $-2 - 2\sum\limits_{j=1}^{\frac{i-2}{2}} \ell_{2j} $ \\ \hline
    $\widetilde{\omega}_{k+1}$ & $0$ & $-1 - 2(\sum\limits_{j=1}^{\frac{k-2}{2}} \ell_{2j} + r_k) $ \\ \hline
    $\widetilde{\theta}_i, i \in \{1, \ldots, k\} \, \mathrm{odd}$ & $-r_i$ & $-2g + 1 + 2\sum\limits_{j=1}^{\frac{i-1}{2}} \ell_{2j-1}$ \\ \hline
    $\widetilde{\theta}_i, i \in \{1, \ldots, k\} \, \mathrm{even}$ & $-r_i$ &  $-2g + 2\sum\limits_{j=1}^{\frac{i}{2}} \ell_{2j-1} $  \\ \hline
    $\widetilde{\eta}_0$ & $-r_0=-g$ & $-2g$  \\ \hline
     \end{tabular}
\end{center}
\caption {The Alexander and Maslov gradings of the basis elements $\widetilde{\xi}_0$, $\widetilde{\omega}_i$, $\widetilde{\theta}_i$, and $\widetilde{\eta}_0$ of $\cfkm(K)$, when $K$ admits a positive L-space surgery and $k$ is even.}
\label{tab:amlspacekeven}
\end{table}

Now suppose $K$ admits a negative L-space surgery. Recall that $k \geq 1$, since by assumption $K$ is neither the unknot nor the left-handed trefoil. Then $\cfkm(K)$ is given by the staircases in Figure \ref{fig:cfkmlspacenegsurgery}. The left staircase is for the case when $k$ is odd, while the right staircase is for the case when $k$ is even (compare with Figure \ref{fig:cfkmlspace} from the positive L-space surgery case).

 \begin{figure}[h]
    \centering
    {\scalefont{0.8}
    \begin{tikzpicture}[xscale=0.62,yscale=0.62]
      \node at (0,-4) (x1) {$\widetilde{\eta}_{0}$};
      \node at (0,-6) (x2) {$\widetilde{\omega}_{1}$};
      \node at (2,-6) (x3) {$\widetilde{\omega}_{2}$};
      \node at (4,-8) (x4) {$\widetilde{\omega}_{k-1}$};
      \node at (4,-10) (x5) {$\widetilde{\omega}_{k}$};
      \node at (6,-10) (x6) {$\widetilde{\omega}_{k+1}$}; 
      \node at (6,-12) (x7) {$\widetilde{\theta}_{k}$};
      \node at (8,-12) (x8) {$\widetilde{\theta}_{k-1}$};
      \node at (10,-14) (x9) {$\widetilde{\theta}_{2}$};
      \node at (10,-16) (x10) {$\widetilde{\theta}_{1}$};
      \node at (12,-16) (x11) {$\widetilde{\xi}_{0}$};
      \draw[algarrow] (x1) to node[left] {$1$} (x2);
      \draw[algarrow] (x3) to node[below] {$\ell_1$} (x2);
      \draw[algarrow] (x4) to node[left] {$\ell_{k-1}$} (x5);
      \draw[algarrow] (x6) to node[below] {$r_k$} (x5);
      \draw[algarrow] (x6) to node[left] {$r_k$} (x7);
      \draw[algarrow] (x8) to node[below] {$\ell_{k-1}$} (x7);
      \draw[loosely dotted, thick] (x3) to node[right] {} (x4);
      \draw[loosely dotted, thick] (x8) to node[right] {} (x9);
      \draw[algarrow] (x9) to node[left] {$\ell_{1}$} (x10);
      \draw[algarrow] (x11) to node[below] {$1$} (x10);
    \end{tikzpicture}
    \begin{tikzpicture}[xscale=0.62,yscale=0.62]
      \node at (0,-4) (x1) {$\widetilde{\eta}_{0}$};
      \node at (0,-6) (x2) {$\widetilde{\omega}_{1}$};
      \node at (2,-6) (x3) {$\widetilde{\omega}_{2}$};
      \node at (3,-9) (x4) {$\widetilde{\omega}_{k-1}$};
      \node at (5,-9) (x5) {$\widetilde{\omega}_{k}$};
      \node at (5,-11) (x6) {$\widetilde{\omega}_{k+1}$}; 
      \node at (7,-11) (x7) {$\widetilde{\theta}_{k}$};
      \node at (7,-13) (x8) {$\widetilde{\theta}_{k-1}$};
      \node at (10,-14) (x9) {$\widetilde{\theta}_{2}$};
      \node at (10,-16) (x10) {$\widetilde{\theta}_{1}$};
      \node at (12,-16) (x11) {$\widetilde{\xi}_{0}$};
      \draw[algarrow] (x1) to node[left] {$1$} (x2);
      \draw[algarrow] (x3) to node[below] {$\ell_1$} (x2);
      \draw[algarrow] (x5) to node[below] {$\ell_{k-1}$} (x4);
      \draw[algarrow] (x5) to node[left] {$r_k$} (x6);
      \draw[algarrow] (x7) to node[below] {$r_k$} (x6);
      \draw[algarrow] (x7) to node[left] {$\ell_{k-1}$} (x8);
      \draw[loosely dotted, thick] (x3) to node[right] {} (x5);
      \draw[loosely dotted, thick] (x7) to node[right] {} (x9);
      \draw[algarrow] (x9) to node[left] {$\ell_{1}$} (x10);
      \draw[algarrow] (x11) to node[below] {$1$} (x10);
    \end{tikzpicture}	
    }	
    \caption{$\cfkm(K)$ for L-space knots $K$ that admit negative L-space surgeries. The left staircase is for $k$ odd, while the right staircase is for $k$ even.}
    \label{fig:cfkmlspacenegsurgery}
  \end{figure}
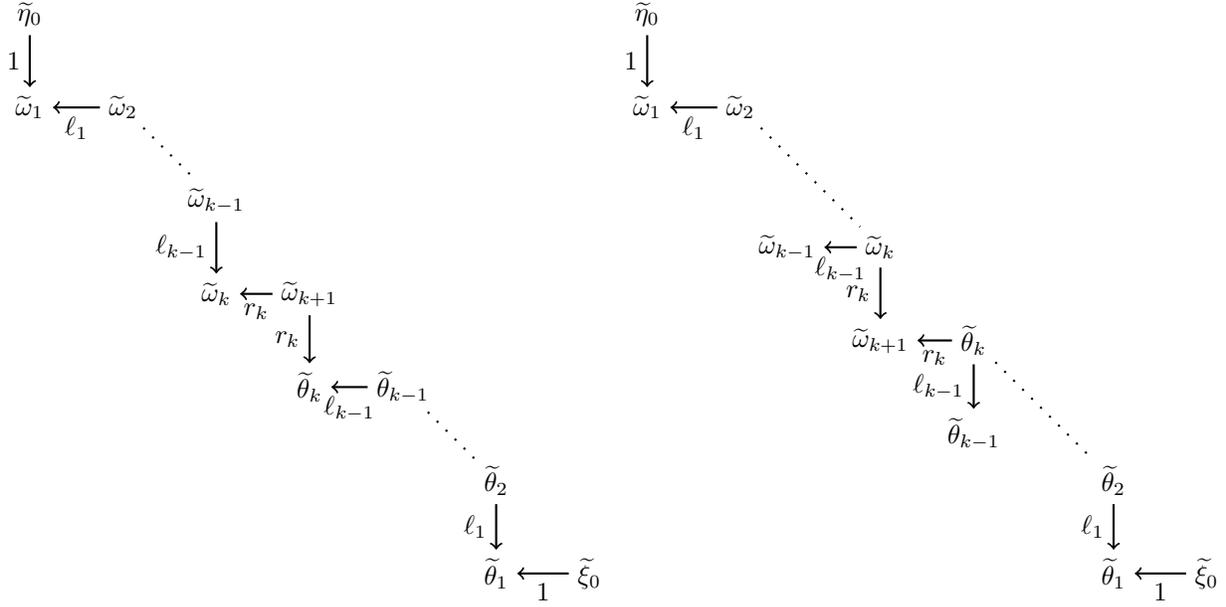

  The Alexander and Maslov gradings of the basis elements $\widetilde{\xi}_0$, $\widetilde{\omega}_i$, $\widetilde{\theta}_i$, and $\widetilde{\eta}_0$ are given by Tables \ref{tab:amlspacenegsurgery} and \ref{tab:amlspacekevennegsurgery}. Table \ref{tab:amlspacenegsurgery} is for the case when $k$ is odd, while Table \ref{tab:amlspacekevennegsurgery} is for the case when $k$ is even (compare with Tables \ref{tab:amlspace}  and \ref{tab:amlspacekeven} from the positive L-space surgery case).

\begin{table}[h]
\begin{center}
  \begin{tabular}{ | c | c | c | }
    \hline
    Basis element & $A$ & $M$  \\
    \hline
    \hline
    $\widetilde{\eta}_0$ & $r_0=g$ & $2g$  \\ \hline
    $\widetilde{\omega}_i, i \in \{1, \ldots, k\} \, \mathrm{odd}$ & $r_i$ & $2g-1 - 2\sum\limits_{j=1}^{\frac{i-1}{2}} \ell_{2j-1}$  \\ \hline
    $\widetilde{\omega}_i, i \in \{1, \ldots, k\} \, \mathrm{even}$ & $r_i$ & $2g - 2\sum\limits_{j=1}^{\frac{i}{2}} \ell_{2j-1} $ \\ \hline
    $\widetilde{\omega}_{k+1}$ & $0$ & $2g - 2(\sum\limits_{j=1}^{\frac{k-1}{2}} \ell_{2j-1} + r_k) $ \\ \hline
    $\widetilde{\theta}_i, i \in \{1, \ldots, k\} \, \mathrm{odd}$ & $-r_i$ & $1 + 2\sum\limits_{j=1}^{\frac{i-1}{2}} \ell_{2j}$ \\ \hline
    $\widetilde{\theta}_i, i \in \{1, \ldots, k\} \, \mathrm{even}$ & $-r_i$ &  $2+ 2\sum\limits_{j=1}^{\frac{i-2}{2}} \ell_{2j} $  \\ \hline
    $\widetilde{\xi}_0$ & $-r_0=-g$ & $0$  \\ \hline
     \end{tabular}
\end{center}
\caption {The Alexander and Maslov gradings of the basis elements $\widetilde{\xi}_0$, $\widetilde{\omega}_i$, $\widetilde{\theta}_i$, and $\widetilde{\eta}_0$ of $\cfkm(K)$, when $K$ admits a negative L-space surgery and $k$ is odd.}
\label{tab:amlspacenegsurgery} 
\end{table}

\begin{table}[h]
\begin{center}
  \begin{tabular}{ | c | c | c | }
    \hline
    Basis element & $A$ & $M$  \\
    \hline
    \hline
    $\widetilde{\eta}_0$ & $r_0=g$ & $2g$  \\ \hline
    $\widetilde{\omega}_i, i \in \{1, \ldots, k\} \, \mathrm{odd}$ & $r_i$ & $2g-1 - 2\sum\limits_{j=1}^{\frac{i-1}{2}} \ell_{2j-1}$  \\ \hline
    $\widetilde{\omega}_i, i \in \{1, \ldots, k\} \, \mathrm{even}$ & $r_i$ & $2g - 2\sum\limits_{j=1}^{\frac{i}{2}} \ell_{2j-1} $ \\ \hline
    $\widetilde{\omega}_{k+1}$ & $0$ & $2g -1 - 2\sum\limits_{j=1}^{\frac{k}{2}} \ell_{2j-1} $ \\ \hline
    $\widetilde{\theta}_i, i \in \{1, \ldots, k\} \, \mathrm{odd}$ & $-r_i$ & $1 + 2\sum\limits_{j=1}^{\frac{i-1}{2}} \ell_{2j}$ \\ \hline
    $\widetilde{\theta}_i, i \in \{1, \ldots, k\} \, \mathrm{even}$ & $-r_i$ &  $2+ 2\sum\limits_{j=1}^{\frac{i-2}{2}} \ell_{2j} $  \\ \hline
    $\widetilde{\xi}_0$ & $-r_0=-g$ & $0$  \\ \hline
     \end{tabular}
\end{center}
\caption {The Alexander and Maslov gradings of the basis elements $\widetilde{\xi}_0$, $\widetilde{\omega}_i$, $\widetilde{\theta}_i$, and $\widetilde{\eta}_0$ of $\cfkm(K)$, when $K$ admits a negative L-space surgery and $k$ is even.}
\label{tab:amlspacekevennegsurgery}
\end{table}

\begin{figure}[h]
    \centering
    {\scalefont{0.8}
    \begin{tikzpicture}[xscale=0.62,yscale=0.62]
      \node at (0,-4) (x1) {$\eta_{0}$};
      \node at (0,-6) (x2) {$\kappa_{1}^{\eta_0}$};
      \node at (0,-8) (x3) {$\omega_{1}$};
      \node at (2,-8) (x4) {$\lambda_{\ell_1}^{\omega_2}$};
      \node at (4,-8) (x7) {$\lambda_{1}^{\omega_2}$};
      \node at (6,-8) (x8) {$\omega_{2}$};
      \node at (8,-10) (x9) {$\theta_{2}$};
      \node at (8,-12) (x10) {$\kappa_{1}^{\theta_2}$};
      \node at (8,-14) (x13) {$\kappa_{\ell_1}^{\theta_2}$};
      \node at (8,-16) (x14) {$\theta_{1}$};
      \node at (10,-16) (x15) {$\lambda_{1}^{\xi_0}$};
      \node at (12,-16) (x16) {$\xi_{0}$};
      \draw[algarrow] (x1) to node[left] {$\rho_1$} (x2);
      \draw[algarrow] (x3) to node[left] {$\rho_{123}$} (x2);
      \draw[algarrow] (x4) to node[below] {$\rho_2$} (x3);
      \draw[algarrow] (x8) to node[below] {$\rho_{3}$} (x7);
      \draw[algarrow] (x9) to node[left] {$\rho_{1}$} (x10);
      \draw[algarrow] (x14) to node[left] {$\rho_{123}$} (x13);
      \draw[algarrow] (x15) to node[below] {$\rho_{2}$} (x14);
      \draw[algarrow] (x16) to node[below] {$\rho_{3}$} (x15);
      \draw[loosely dotted, thick] (x7) to node[left] {} (x4);
      \draw[loosely dotted, thick] (x8) to node[left] {} (x9);
      \draw[loosely dotted, thick] (x13) to node[right] {} (x10);
      \draw[unst1] (x16) to node[below] {} (x1);
    \end{tikzpicture}	
    }	
\caption{$\cfdhat(X_{K,n})$ for L-space knots $K$ that admit negative L-space surgeries. The dotted arrow represents the unstable chain.}
\label{fig:cfdlspacenegsurgery}
  \end{figure}

The type $D$ structure $\cfdhat(X_{K,n})$ is given by Figure \ref{fig:cfdlspacenegsurgery} (compare with Figure \ref{fig:cfdlspace} from the positive L-space surgery case). Using methods from the positive L-space surgery case, one can calculate the relative $\delta$-gradings of the generators of $H_{\ast}(\cfahat(\mathcal H)\boxtimes \cfdhat(X_{K, n})) \cong \hfkhat(Q_n(K))$, and verify that the thickness values of $Q_n(K)$ agree with the thickness values of $Q_n(K)$ from the positive L-space surgery case. This concludes the proof of Proposition \ref{prop:lspacethickness}.
\end{proof}

\subsubsection{Cosmetic Surgery Conjecture for L-space companions}

In this subsection, we prove Theorem~\ref{thm:csc} for L-space companions. Our main technical tool will be Inequality \ref{eqn:csc2}. We use the same notation as in Section \ref{sec:lspacethickness}.

\begin{proof}[Proof of Theorem~\ref{thm:csc} for L-space companions]
First suppose $K$ is neither the unknot nor a trefoil. Then $g = g(K) \geq 2$. By Theorem \ref{thm:g3}, $g(Q_n(K)) \geq 3$ for every $n$. This means that we can use Inequality~\ref{eqn:csc2} to test whether $Q_n(K)$ satisfies the cosmetic surgery conjecture. We consider several cases, depending on our values for $\mathrm{th}(Q_n(K))$ from Proposition \ref{prop:lspacethickness}.

We begin with the case $n \leq -2g$. Then $\mathrm{th}(Q_n(K)) = 2g - n -1$ and $g(Q_n(K)) = g -n$. By the argument in Case~\ref{case:12} of Section~\ref{sssec:alt-csc}, the satellites $Q_n(K)$ satisfy the cosmetic surgery conjecture.

Now suppose $n \in [-2g+1, g+r_2]$. Then $\mathrm{th}(Q_n(K)) = 4g-2$. We consider two subcases:
\begin{itemize}
\item[-] Suppose $n \in [-2g+1,-1]$. Then $g(Q_n(K)) = g-n$. As seen in Section \ref{sec:lspacethickness}, for every $n \leq -1$ and $g \geq 2$, except for $n=-1$ and $g=2$,  $f(Q_n(K)) >0$. Hence for every $n \leq -1$ and $g \geq 2$, except for $n=-1$ and $g=2$, the satellites $Q_n(K)$ satisfy the cosmetic surgery conjecture. Now we resolve the remaining case where $n=-1$ and $g=2$, using the Boyer-Lines obstruction in \cite[Proposition 5.1]{boyerlines}. First note that $\Delta_K(t) = t^{-2} - t^{-1} + 1 - t + t^2$ and $\Delta_{Q_{-1}(U)}(t) = \Delta_{5_2}(t)  = 2t^{-1} -3 + 2t$. Then $\Delta_{Q_{-1}(K)}''(1) = \Delta_{Q_{-1}(U)}''(1)  + \Delta_{K}''(1) = 4 + 6 =10$. By \cite[Proposition 5.1]{boyerlines}, $Q_{-1}(K)$ satisfies the cosmetic surgery conjecture.

\item[-] Suppose $n \in [0, g+r_2]$. Then $g(Q_n(K)) = n +g+1$. As seen in Section \ref{sec:lspacethickness}, for every $n \geq 0$ and $g \geq 2$, except for $n=0$ and $g=2$,  $f(Q_n(K)) >0$. Hence for every $n \geq 0$ and $g \geq 2$, except for $n=0$ and $g=2$, the satellites $Q_n(K)$ satisfy the cosmetic surgery conjecture. Now suppose $n=0$ and $g=2$. Then $\Delta_{Q_{0}(K)}(t) = \Delta_{K}(t) = t^{-2} - t^{-1} + 1 - t + t^2$, which implies that $\Delta_{Q_{0}(K)}''(1) =  \Delta_{K}''(1) = 6$. By \cite[Proposition 5.1]{boyerlines}, $Q_{0}(K)$ satisfies the cosmetic surgery conjecture.
\end{itemize}

Finally, we consider the case where $n \geq g+r_2+1$. Then $\mathrm{th}(Q_n(K)) = 3g-r_2+n-2$ and $g(Q_n(K)) = g +n +1$. For every $n \geq 3$ and $g \geq 2$,  $f(Q_n(K)) = 2g^2 -3g + 2n^2 -n +4gn+r_2 >0$. Thus, when $n \geq g+r_2+1$, the satellites $Q_n(K)$ also satisfy the cosmetic surgery conjecture. 

If $K$ is the unknot or a trefoil, then by Section \ref{sec:lspacethickness}, all nontrivial $Q_n(K)$ satisfy the cosmetic surgery conjecture. This concludes the proof of Theorem~\ref{thm:csc} for L-space companions.
\end{proof}


\bibliographystyle{alpha}

\bibliography{master}

\end{document}